\documentclass[12pt]{amsart}

\usepackage{amssymb,mathrsfs,amsmath,amsthm,color,bm}

\usepackage{enumerate}
\usepackage{graphicx}
\usepackage[centering]{geometry}
\usepackage{cite}
\geometry{a4paper,text={6in,9in}}
\parskip.5ex
\linespread{1.1}

\theoremstyle{plain}
\newtheorem{thm}{Theorem}[section]
\newtheorem{definition}{Definition}[section]

\newtheorem{proposition}[thm]{Proposition}
\newtheorem{corollary}[thm]{Corollary}
\newtheorem{lemma}[thm]{Lemma}

\theoremstyle{remark}
\newtheorem{remark}{Remark}
\numberwithin{equation}{section}

\newcommand{\N}{\mathbb N}

\newcommand{\bz}{\mathbf{z}}

\begin{document}
	\title[Quantitative twisted recurrence properties]{Quantitative twisted recurrence properties for piecewise expanding maps on $[0,1]^d$} 
	
		\author[Jiachang Li]{Jiachang Li}
		\address{Faculty of Innovation Engineering, Macau University of Science and Technology, Macau, {\rm	999078}, P. R. China}
		\email{3220001944@student.must.edu.mo }
	
		\author[Chao Ma]{Chao Ma}
		\address{Faculty of Innovation Engineering, Macau University of Science and Technology, Macau, {\rm	999078}, P. R. China}
		\email{cma@must.edu.mo }
	
	\date{\today}
	
	\keywords{quantitative twisted recurrence properties, the shrinking target problem, recurrence, cross-component recurrence, piecewise expanding maps}

	\let\thefootnote\relax
	\footnotetext{MSC2020: Primary 28A80, Secondary 11K55.} 
	
	\begin{abstract}
		Let $T:[0,1]^d \rightarrow[0,1]^d$ be a piecewise expanding map with an absolutely continuous (with respect to the $d$-dimensional Lebesgue measure $m_d$) $T$-invariant probability measure $\mu$. Let $\left\{\mathbf{r}_n\right\}$ be a sequence of vectors satisfying the conditons that $\mathbf{r}_n=\left(r_{n, 1}, \ldots, r_{n, d}\right) \in\left(\mathbb{R}_{\geq 0}\right)^d$, the sequence $\left\{\frac{\max _{1 \leq i \leq d}r_{n, i}}{\min _{1 \leq i \leq d}r_{n, i}}\right\}$ is bounded and  
		$\lim _{n \rightarrow \infty} \max _{1 \leq i \leq d}r_{n, i}=0$. Let $\left\{\delta_n\right\}$ be a sequence of non-negative real numbers with $
		\lim _{n \rightarrow \infty} \delta_n=0
		$. Under the assumptions that $\mu$ is exponentially mixing and its density is sufficiently regular, we prove that the $\mu$-measure of the following sets
		$$
		\mathcal{R}^f\left(\left\{\mathbf{r}_n\right\}\right)=\left\{\mathbf{x} \in[0,1]^d: T^n \mathbf{x} \in R\left(f(\mathbf{x}), \mathbf{r}_n\right) \text { for infinitely many } n \in \mathbb{N} \right\} 
		$$
		and
		$$
		\mathcal{R}^{f \times}\left(\left\{\delta_n\right\}\right)=\left\{\mathbf{x} \in[0,1]^d: T^n \mathbf{x} \in H\left(f(\mathbf{x}), \delta_n\right) \text { for infinitely many } n \in \mathbb{N} \right\}
		$$
		obeys zero-full laws determined by the convergence or divergence of natural volume sums. Here, $R(f(\mathbf{x}), \mathbf{r}_n)$ and $H(f(\mathbf{x}), \delta_n)$ represent ‘targets’ as, respectively, coordinate-parallel hyperrectangles with bounded aspect ratio, and hyperboloids, both centered at $f(\mathbf{x})$. $f: [0,1]^d \rightarrow [0,1]^d$ is a piecewise Lipschitz vector function. Our results not only unify quantitative recurrence properties and the shrinking target problem for piecewise expanding maps on $[0,1]^d$, but also reveal that the two problems and cross-component recurrence (one component of a point $\mathbf{x} \in \mathbb{R}^d$ in a measurable dynamical system returns close to another component of the point under a measure-preserving map $T$) can coexist in distinct directions on $[0,1]^d$. We also show that, under a additional mild condition on $f$, the bounded aspect ratio assumption can be removed while maintaining applicability to the two classical problems.
	\end{abstract} 
	\maketitle

	\section{Introduction}
	Let $(X, \mathcal{B}, \mu, T)$ be a measure-preserving dynamical system with a compatible metric $\rho$, that is, $(X, \rho)$ is a metric space, $\mathcal{B}$ is a Borel $\sigma$-algebra of $X$, and $\mu$ is an $T$-invariant probability measure. Suppose that $(X, \rho)$ is complete and separable, then Poincaré's recurrence theorem, one of the most foundational results within dynamical systems and ergodic theory (see \cite[Theorem 2.11]{1}), asserts that $\mu$-almost every point in a measurable dynamical system returns close to itself under a measure-preserving map $T$, i.e.
	$$
	\liminf _{n \rightarrow \infty} \rho\left(T^n x, x\right)=0.
	$$
	
	Boshernitzan proved the ﬁrst general quantitative recurrence results in \cite[Theorem 1.2]{2}. Specifically, assume that the $\alpha$-dimensional Hausdorff measure of $X$ is $\sigma$-finite for some $\alpha>0$. Then,
	$$
	\liminf _{n \rightarrow \infty} n^{1 / \alpha} \rho\left(T^n x, x\right)<\infty \quad \text { for  } \mu-a.e. x \in X\text {. }
	$$
	Here, $\mu-a.e.$ denotes $almost$ $every$ with respect to $\mu$. This result leads us to focus on the size of the following set:
	$$
	R(\psi):=\left\{x \in X: \rho\left(T^n x, x\right)<\psi(n) \text { for i.m. } n \in \mathbb{N}\right\}
	$$
	\noindent where $\psi: \mathbb{N} \rightarrow \mathbb{R}^{+}$ is a positive function and $i.m.$ denotes $infinitely$ $many$.
	
	The zero-full law for the $\mu$-measure of the recurrence set $R(\psi)$, often referred to as quantitative recurrence properties or dynamical Borel-Cantelli lemma for recurrence theory, has been extensively studied in various dynamical systems such as  \cite{3,4,5,6,7,8,9}; for similar results, see also \cite{10,11,12,13}. Particularly noteworthy is the recent work of Y.-B. He and L.-M. Liao [7], which proves quantitative recurrence properties for piecewise expanding maps on $[0,1]^d$.
	
	The shrinking target problem involves determining the rate at which the orbit of a $\mu$-typical point accumulates near a fixed point $y \in X$. To be more specific, for  $\psi: \mathbb{N} \rightarrow \mathbb{R}^{+}$, define 
	$$
	R^y(\psi):=\left\{x \in X: \rho\left(T^n x, y\right)<\psi(n) \text { for i.m. } n \in \mathbb{N}\right\}\text {. }
	$$
	Readers seeking a deeper understanding of the $\mu$-measure or similar results of such set are referred to \cite{14,15,16} and the references cited therein. Of particular relevance to our work is the recent study by B. Li, L.-M. Liao, S. Velani and E. Zorin \cite{16}, which investigates the shrinking target problem for matrix transformations of tori.
	
	These two problems are the most commonly studied, but they are treated separately and proved differently. A more general definition, known as twisted recurrence or modified shrinking target problem, unifies the shrinking target problem and recurrence. Specifically, for  $\psi: \mathbb{N} \rightarrow \mathbb{R}^{+}$ and a Borel measurable function $f: X \rightarrow X$, define 
	\begin{equation} \label{eq:f(x)}
		R^f(\psi):=\left\{x \in X: \rho\left(T^n x, f(x)\right)<\psi(n) \text { for i.m. } n \in \mathbb{N}\right\} .
	\end{equation}
	Clearly, if $f=Id_X$, then $ R^f(\psi)=R(\psi)$. In addition, if $f(x) \equiv y$, then $ R^f(\psi)=R^y(\psi)$. 
	
	The zero-full law for the $\mu$-measure of the set \eqref{eq:f(x)} has been the subject of significant study in various dynamical systems. For example, D. Kleinbock and J. Zheng \cite{17} established the $\mu$-measure results for \eqref{eq:f(x)} in conformal dynamical systems with pseudo-Markov partitions, including prominent examples such as the Gauss map and the multiplication map $M_b$ for integers $b \geq 2$. For $M_\beta$, where $\beta>1$, F. Lü, B.-W. Wang and J. Wu \cite{18} established the zero-full criteria with a Lipschitz function $f:[0,1] \rightarrow[0,1]$. For dynamical systems with exponential decay of correlations, see J. Zheng \cite{19}. Furthermore, Z.-P. Shen \cite{20} presented both the $\mu$-measure and Hausdorff dimension results on self-conformal sets generated by a conformal iterated function system satisfying the open set condition. Y.-F. Wu \cite{21} also obtained the Lebesgue measure and Hausdorﬀ
	dimension results for $X=[0,1]^2$ in \eqref{eq:f(x)} with Lipschitz functions $f:[0,1]^2 \rightarrow[0,1]$. Moreover, a number of related Hausdorff dimension results can be found in \cite{22,23,24,25}. Notably, when $T$ is an expanding real matrix transformation of tori, N. Yuan and S.-L. Wang \cite{22} obtained the  Hausdorff dimension results of the $d$-dimensional ‘hyperrectangle’ form of \eqref{eq:f(x)}  for any Lipschitz coordinate-wise vector function $f:[0,1)^d \rightarrow [0,1)^d$; that is the Lipschitz vector function $f(\mathbf{x})=\left(f_{1^{'}}\left(x_1\right), f_{2^{'}}\left(x_2\right), \cdots, f_{d^{'}}\left(x_d\right)\right)$, where $f_{i^{'}}: [0,1) \rightarrow [0,1)$ denotes the $i$-th component function of $f$ for $1 \leq i^{'} \leq d$.
	
	As discussed above, extensive research has investigated twisted recurrence or modified shrinking target problem, yet cross-component recurrence of $X=\mathbb{R}^d$ remains unexplored. The cross-component recurrence refers to the asymptotic behaviour of dynamical system orbits that one component of a point $\mathbf{x} \in \mathbb{R}^d$ in a measurable dynamical system returns close to another component of the ponit under a measure-preserving map $T$. Specifically, for  $\psi: \mathbb{N} \rightarrow \mathbb{R}^{+}$ and components $x_i$ and $x_j$ of a point $\mathbf{x} \in \mathbb{R}^d$ with $i \neq j$,  define
	$$
	R^{i,j}(\psi):=\left\{\mathbf{x} \in \mathbb{R}^d: \rho\left(T^n x_i, x_j\right)<\psi(n) \text { for i.m. } n \in \mathbb{N}\right\} .
	$$
	By defining the function $f$ in \eqref{eq:f(x)} as a vector function $f:\mathbb{R}^d \rightarrow \mathbb{R}^d$; that is $$f(\mathbf{x})=\left(f_1\left(x_1, \ldots, x_d\right), f_2\left(x_1, \ldots, x_d\right), \cdots, f_d\left(x_1, \ldots, x_d\right)\right),$$ 
	where $f_i: \mathbb{R}^d \rightarrow \mathbb{R}$ denotes the $i$-th component function of $f$ for $1 \leq i \leq d$, we can introduce the cross-component recurrence into the framework of twisted recurrence. We also notice that if $f_i\left(x_1, \ldots, x_d\right)=x_j$, then $R^{i,j}(\psi) \subset R^f(\psi)$. This phenomenon manifests through the asymptotic behaviour between components, revealing nontrivial interdimensional coupling mechanisms in dynamical systems.
	
	The primary objective of this paper is to establish quantitative twisted recurrence properties in the non-conformal case, with a focus on piecewise expanding maps (including expanding real matrix transformations) and the ‘targets’ are either hyperrectangles or hyperboloids. Moreover, by adopting the framework and methods of \cite{7} and \cite{16}, we can unify some results from these two studies.
	
	Throughout the paper, we take $X$ to be the unit hypercube $[0,1]^d$ endowed with the maximum norm $|\cdot|$, i.e. for any $\mathbf{x}=\left(x_1, \ldots, x_d\right) \in[0,1]^d$, $|\mathbf{x}|=\max \left\{\left|x_1\right|, \ldots,\left|x_d\right|\right\}$. Let $|\cdot|_{\text {min }}$ denote minimum norm such that $|\mathbf{x}|_{\text {min }}=\min \left\{\left|x_1\right|, \ldots,\left|x_d\right|\right\}$. Let $m_d$ and $m_{d-1}$  denote the Lebesgue measures of dimensions $d$ and $d-1$, respectively. Given a set $A \subset \mathbb{R}^d$, let $\bar{A}, \partial A, A(\varepsilon),|A|$ and $\# A$ denote the closure, boundary, $\varepsilon$-neighbourhood, diameter, and cardinal number of $A$, respectively. 
	Following the classical text of Federer \cite[Section 3.2.37]{26}, the $(d-1)$-dimensional upper and lower Minkowski content of $A$ are defined, respectively as
	$$
	M^{*(d-1)}(A):=\underset{\epsilon \rightarrow 0^{+}}{\lim \sup } \frac{m_d(A(\epsilon))}{\epsilon} \quad \text { and } \quad M_*^{(d-1)}(A):=\liminf _{\epsilon \rightarrow 0^{+}} \frac{m_d(A(\epsilon))}{\epsilon} \text {. }
	$$
	If these upper and lower Minkowski contents are equal, then their common value is called the $(d-1)$-dimensional Minkowski content of $A$, denoted by $M^{(d-1)}(A)$. Specifically, Federer \cite[Theorem 3.2.39]{26} shows that if $A$ is a closed $(d-1)$-rectifiable subset of $\mathbb{R}^d$ (i.e. the image of a bounded set from $\mathbb{R}^{(d-1)}$ under a Lipschitz function), then the $M^{(d-1)}(A)$ exists and is equal to the $(d-1)$-dimensional Hausdorff measure of $A$, which is also a constant $c_M >0$ multiple of $m_{d-1}(A)$.
	
	Let us now introduce the class of measure-preserving systems $\left([0,1]^d, T, \mu\right)$, as defined in \cite[Definition 1.1]{7}.
	
	\begin{definition}[{\cite[Definition 1.1 (Piecewise expanding map)]{7}}] \label{pem}  We say that $T:[0,1]^d \rightarrow[0,1]^d$ is a piecewise expanding map if there is a finite family $\left(U_i\right)_{i=1}^Q$ of non-empty, pairwise disjoint, and connected open subsets in $[0,1]^d$ with $ \bigcup_{i=1}^Q\overline{U_i}=[0,1]^d$ such that the following statements hold.
		\begin{enumerate}
			\item[(i)] The map $T$ is injective and can be extended to a $C^1$ map on each $\bar{U}_i$. Moreover, there exists a constant $L>1$ such that
			$$
			\inf _i \inf _{\mathbf{x} \in U_i}\left\|D_{\mathbf{x}} T\right\| \geq L,
			$$
			where $D_{\mathbf{x}} T$ is the derivative of $T$ at $\mathbf{x}$ determined by
			$$
			\lim _{|\mathbf{z}| \rightarrow 0} \frac{\left|T(\mathbf{x}+\mathbf{z})-T(\mathbf{x})-\left(D_{\mathbf{x}} T\right)(\mathbf{z})\right|_2}{|\mathbf{z}|_2}=0
			$$
			and $||D_{\mathbf{x}} T \|:=\sup _{\mathbf{z} \in \mathbb{R}^d}\left|\left(D_{\mathbf{x}} T\right)(\mathbf{z})\right| /|\mathbf{z}|$. Here $ |\bz|_2=\sqrt{z_1^2+\cdots+z_d^2} $. 
			\item[(ii)] The boundary of $U_i$ is included in a $C^1$ piecewise embedded compact submanifold of codimension one. In particular, there exists a constant $K$ such that
			$$
			\max _{1 \leq i \leq Q} M^{*(d-1)}\left(\partial U_i\right) \leq K
			$$
		\end{enumerate}
	\end{definition}

	\begin{remark}
		Let $T$ be a real matrix transformation and its corresponding matrix $\mathbf{T}$ be a $d \times d$ matrix with real coefficients. Then, $T$ determines a self-map of $[0,1)^d$; namely, it sends $\mathbf{x} \in[0,1)^d$ to $\mathbf{x} \mathbf{T} \bmod 1$. 
		
		We say that if $\mathbf{T}$ is a $d \times d$ real, non-singular matrix with the modulus of all eigenvalues strictly larger than 1, then $T$ is a piecewise expanding map. The reasons are briefly as follows. First, $T$ is injective  because $\mathbf{T}$ is  non-singular. And since $\mathbf{T}$ is a $d \times d$ real matrix, then $\mathbf{x} \mathbf{T}$ is linear, making $T$ a $C^1$ map on $[0,1]^d$. Moreover, $T$ expands in every direction because the modulus of all eigenvalues of $\mathbf{T}$ is strictly larger than $1$. Finally, for each $1 \leq i \leq Q$, $U_i$ is a $d$-dimensional open subset in $[0,1)^d$, so there exists a constant $K$ such that	$
		\max _{1 \leq i \leq Q} M^{*(d-1)}\left(\partial U_i\right) \leq K
		$. For example, when $d=2$, for $\mathbf{T}=\left(\begin{array}{cc}\frac{3}{2} & \sqrt{2} \\ 1 & -2\end{array}\right)$, the finite family $\left(U_i\right)_{i=1}^{10}$ of non-empty, pairwise disjoint, and connected open subsets in $[0,1)^2$ with $ \bigcup_{i=1}^{10}\overline{U_i}=[0,1]^2 $ is shown in Figure \ref{f1}.
		
		\begin{figure}[ht]\label{f1}
			\centering
			\includegraphics[scale=0.5]{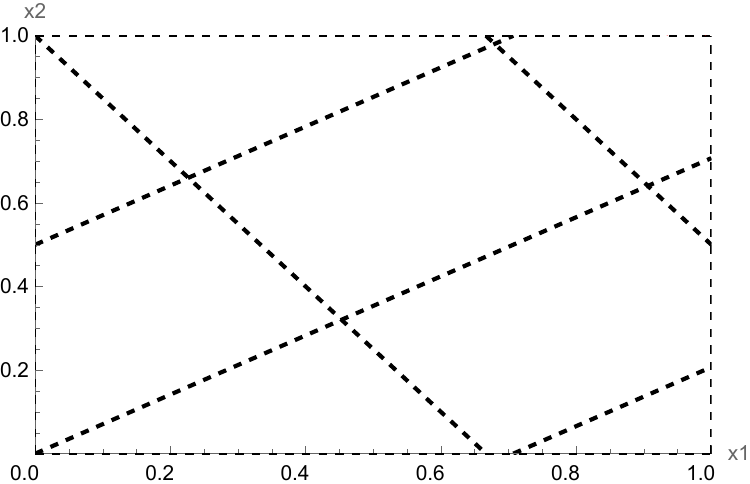}
			\caption{The finite family $\left(U_i\right)_{i=1}^{10}$ of non-empty, pairwise disjoint, and connected open subsets in $[0,1)^2$ for $T \mathbf{x}=(\frac{3}{2}x_1+x_2 \bmod 1,\sqrt{2}x_1-2x_2 \bmod 1)$}
		\end{figure}
	\end{remark}
	
	\begin{definition}[Cylinders of order $n$] The collection of cylinders of order $n$ is defined as 
		$$
		\mathcal{F}_n:=\left\{U_{i_0} \cap T^{-1} U_{i_1} \cap \cdots \cap T^{-(n-1)} U_{i_{n-1}}: 1 \leq i_0, i_1, \ldots, i_{n-1} \leq Q\right\}.
		$$
	\end{definition}
	
	\begin{proposition}[{\cite[Proposition 1]{16}}]\label{l:Li}
		Let $T$ be a real, non-singular matrix transformation of the torus $\mathbb{T}^d:=\mathbb{R}^d / \mathbb{Z}^d$ and $\mathbf{T}$ be a $d \times d$ non-singular matrix with real coefficients. Suppose that all eigenvalues of $\mathbf{T}$ are of modulus strictly larger than 1. Then
		\begin{enumerate}
			\item[(i)] there exists an absolutely continuous (with respect to $m_d$) invariant probability measure (acim) $\mu$,
			\item[(ii)] the support $A \subseteq \mathbb{T}^d$ of any acim $\mu$ can be decomposed into finitely many disjoint measurable sets $A_1, \ldots, A_s$ such that for each $1 \leq i \leq s$ the restriction $\left.\mu\right|_{A_i}$ of $\mu$ to $A_i$ is ergodic and is equivalent to the restriction $\left.m_d\right|_{A_i}$ of $m_d$ to $A_i$, 
			\item[(iii)] each ergodic component $A_i$ in (ii) can in turn be decomposed into finitely many disjoint measurable sets $A_{i 1}, \ldots, A_{i p_i}$ such that for each $1 \leq j \leq p_i$ the restriction $\left.\mu\right|_{A_{i j}}$ is mixing with respect to $T^{p_i}$, 
			\item[(iv)] on each mixing component $A_{i j}$ in (iii), the restriction $\left.\mu\right|_{A_{i j}}$ is exponentially mixing with respect to $\left(T^{p_i}, \mathcal{C}\right)$ for any collection $\mathcal{C}$ of subsets $E$ of $A_{i j}$ satisfying the bounded property
			$$
			(\mathbf{B}): \sup _{E \in \mathcal{C}} M^{*(d-1)}(\partial E)<\infty.
			$$
		\end{enumerate}
	\end{proposition}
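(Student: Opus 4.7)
The plan is to analyze the Perron--Frobenius (transfer) operator $\mathcal{L}$ associated to $T$, acting on a suitable Banach space of functions whose norm dominates the $L^1$ norm and controls oscillations. In the multidimensional piecewise expanding setting, the natural choice is Saussol's quasi-H\"older space $V_\alpha$ (or a space of bounded variation in the sense of Giusti), on which $\mathcal{L}$ satisfies a Lasota--Yorke type inequality
\[
\|\mathcal{L}^n h\|_{V_\alpha} \leq C_1 \lambda^n \|h\|_{V_\alpha} + C_2 \|h\|_{L^1},
\]
with some $\lambda \in (0,1)$. This estimate, together with the compact embedding $V_\alpha \hookrightarrow L^1$, makes $\mathcal{L}$ quasi-compact, so its peripheral spectrum consists of finitely many eigenvalues, each of finite algebraic multiplicity.

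From quasi-compactness, $1$ is an eigenvalue and any corresponding fixed density yields an acim $\mu$, proving (i). The finite dimensionality of the eigenspace at $1$, combined with the fact that invariant densities in $V_\alpha$ are essentially bounded and bounded below on their supports (a consequence of the Lasota--Yorke estimate and the expansion), gives the decomposition of the support of $\mu$ into finitely many ergodic pieces $A_1,\ldots,A_s$ with $\mu|_{A_i}$ equivalent to $m_d|_{A_i}$, establishing (ii). For (iii), the peripheral spectrum of $\mathcal{L}$ restricted to the invariant subspace associated with $A_i$ is a finite cyclic group of roots of unity of some order $p_i$, and the corresponding eigenfunctions produce a cyclic permutation of measurable subsets $A_{i1},\ldots,A_{i p_i}$ of $A_i$ under $T$, on each of which $T^{p_i}$ acts ergodically and mixing.

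For (iv), the key point is the relationship between the Banach space norm of $\mathbf{1}_E$ and the $(d-1)$-dimensional Minkowski content of $\partial E$. In the quasi-H\"older (or bounded variation) setting one has an estimate of the form $\|\mathbf{1}_E\|_{V_\alpha} \leq C\bigl(m_d(E)+M^{*(d-1)}(\partial E)\bigr)$, so the bounded property $(\mathbf{B})$ yields a uniform bound on $\|\mathbf{1}_E\|_{V_\alpha}$ for $E \in \mathcal{C}$. Combined with the spectral gap on each mixing component, standard decay-of-correlations arguments then give
\[
\bigl|\mu(E \cap T^{-n p_i}F) - \mu(E)\,\mu(F)\bigr| \leq C\, \gamma^{n}
\]
for all $E,F \in \mathcal{C}$ contained in $A_{ij}$, where $\gamma \in (0,1)$ depends only on the spectral gap and on the uniform bound inherited from $(\mathbf{B})$.

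The main obstacle is translating the abstract spectral information into the uniform exponential mixing required in (iv). Two technical points need care: first, verifying the Lasota--Yorke inequality in the genuinely $d$-dimensional, non-Markovian setting where the pieces $U_i$ have curved boundaries --- this is exactly where the Minkowski content bound in Definition \ref{pem}(ii) enters, in order to control the boundary contributions produced by pulling back through $\mathcal{L}$; and second, checking that indicator functions of sets satisfying $(\mathbf{B})$ have uniformly bounded norm in the Banach space on which the spectral gap holds. Once these two ingredients are in place, the remainder of the argument is a standard application of the Dunford decomposition for quasi-compact operators.
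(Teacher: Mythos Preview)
The paper does not prove this proposition; it is quoted verbatim from \cite[Proposition~1]{16} and used as a black box. There is therefore nothing in the present paper to compare your sketch against.

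That said, your outline is essentially the argument that underlies the cited result: it is Saussol's framework \cite{27} of the transfer operator acting on the quasi-H\"older space $V_\alpha$, the Lasota--Yorke inequality, quasi-compactness via the Ionescu-Tulcea--Marinescu theorem, and the resulting spectral decomposition. The two technical points you single out --- controlling boundary contributions via the Minkowski-content bound in Definition~\ref{pem}(ii), and bounding $\|\mathbf{1}_E\|_{V_\alpha}$ uniformly over $E\in\mathcal{C}$ using property~$(\mathbf{B})$ --- are exactly the right ones.

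One small gap worth flagging: your final correlation bound reads $\bigl|\mu(E\cap T^{-np_i}F)-\mu(E)\mu(F)\bigr|\le C\gamma^n$, whereas the notion of exponential mixing used in this paper (Definition~\ref{mixing}) requires the sharper form $\le \mu(F)\,\phi(n)$ with $\phi(n)=ce^{-\tau n}$. This does follow from the spectral-gap estimate, but one must keep track of the $L^1$ factor: the standard decay-of-correlations bound is $C\|\mathbf{1}_E\|_{V_\alpha}\|\mathbf{1}_F\|_{L^1}\gamma^n$, and on a mixing component the density is bounded above and below, so $\|\mathbf{1}_F\|_{L^1(m_d)}\asymp \mu(F)$. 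Absorbing the uniform bound on $\|\mathbf{1}_E\|_{V_\alpha}$ (coming from $(\mathbf{B})$) into the constant $c$ then yields the required form.
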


	According to Proposition \ref{l:Li}, we can assume that  $\mu$ is an absolutely continuous (with respect to $m_d$) $T$-invariant probability measure. Under this assumption, we have
	\[\mu\bigg(\bigcup_{1\le i\le Q}U_i \bigg)=1.\]
	Futhermore, Proposition \ref{l:Li} characterizes the collection $\mathcal{C}$ of ‘target’ sets as subsets $E$ of $\mathbb{T}^d$ for which the boundary $\partial E$ has bounded $(d-1)$-dimensional upper Minkowski content; that is, $\sup _{E \in \mathcal{C}} M^{*(d-1)}(\partial E)<\infty$. Following the conventions of \cite{16}, we employ the definition below throughout this work.
	
	\begin{definition} [$\phi(n)$-mixing] \label{mixing}
		For any $F, G \in \mathcal{C}$, we have
		\begin{equation}
			\left|\mu\left(F \cap T^{-n} G\right)-\mu(F) \mu(G)\right| \leq  \mu(G) \phi(n),
			\label{Eq.1.2}
		\end{equation}
		where $\phi: \mathbb{N} \rightarrow(0,1)$ is a positive function. The $\phi(n)$-mixing has three progressively enhanced forms.
		\begin{enumerate}
			\item[(i)] $\mu$ is $\Sigma$-mixing (short for summable-mixing) with respect to $(T, \mathcal{C})$, if the series $\sum_{n=1}^{\infty} \phi(n)$ converges. 
			\item[(ii)]  $\mu$ is polynomial-mixing with respect to $(T, \mathcal{C})$,  if $\lim _{n \rightarrow \infty} \phi(n) \cdot n^\alpha=0$, $\forall \alpha>0.$  
			\item[(iii)] $\mu$ is exponential-mixing with respect to $(T, \mathcal{C})$,  if there exist constants $c>0$ and $\tau>0$ such that 
			$\phi(n)=c e^{-\tau n}.$
		\end{enumerate}
	\end{definition}
	
	We introduce these three forms of mixing to ensure precision, as each lemma involving mixing necessitates invoking the weakest mixing assumptions compatible with its proof. For example, the mixing assumptions diverge between the convergence part and divergence part of our results.

	We study two special sets that are associated with the classical Diophantine approximation theory. For any $\mathbf{x} \in[0,1]^d$, $\mathbf{r} \in\left(\mathbb{R}_{\geq 0}\right)^d$, and $\delta>0$, and for a Borel measurable vector function $f: [0,1]^d \rightarrow [0,1]^d$, let
	$$
	R(f(\mathbf{x}), \mathbf{r}):=\prod_{i=1}^d B\left(f_i\left(x_{1}, x_{2}, \ldots, x_{d}\right), r_i\right)
	$$
	and
	$$
	H(f(\mathbf{x}), \delta):=f(\mathbf{x})+\left\{\mathbf{z} \in[-1,1]^d:\left|z_1 \cdots z_d\right|<\delta\right\} \text{,}
	$$
	where $R(f(\mathbf{x}), \mathbf{r})$ is a coordinate-parallel hyperrectangle and $H(f(\mathbf{x}), \delta)$ is a hyperboloid, both centered at  $f(\mathbf{x})$. And if $\frac{\left|\mathbf{r}\right|}{\left|\mathbf{r}\right|_{\text {min }}}$ is bounded, then $R\left(f(\mathbf{x}), \mathbf{r}\right)$ represents a coordinate-parallel hyperrectangle with bounded aspect ratio.
	Let $\left\{\mathbf{r}_n\right\}$ be a sequence of vectors with $\mathbf{r}_n=\left(r_{n, 1}, \ldots, r_{n, d}\right) \in\left(\mathbb{R}_{\geq 0}\right)^d$, define
	$$
	\mathcal{R}^f\left(\left\{\mathbf{r}_n\right\}\right):=\left\{\mathbf{x} \in[0,1]^d: T^n \mathbf{x} \in R\left(f(\mathbf{x}), \mathbf{r}_n\right) \text { for i.m. } n  \in \mathbb{N}\right\} \text {. }
	$$
	If all components of the vector $\mathbf{r}_n$ are equal, that is $r_{n, 1}=\cdots=r_{n, d}=\psi(n)$, then the aspect ratio $\frac{\left|\mathbf{r}_n\right|}{\left|\mathbf{r}_n\right|_{\text {min }}} \equiv 1$, and $\mathcal{R}^f\left(\left\{\mathbf{r}_n\right\}\right)$ coincides with $R^f(\psi)$ defined in \eqref{eq:f(x)} for $X=[0,1]^d$.
	
	Next, let $\left\{\delta_n\right\}$ be a sequence of non-negative real numbers, define
	$$
	\mathcal{R}^{f \times}\left(\left\{\delta_n\right\}\right):=\left\{\mathbf{x} \in[0,1]^d: T^n \mathbf{x} \in H\left(f(\mathbf{x}), \delta_n\right) \text { for i.m. } n \in \mathbb{N}\right\}\text {. }
	$$
	It is worth noting that we do not assume the sequences $\left\{\mathbf{r}_n\right\}$ and $\left\{\delta_n\right\}$ to be non-increasing, as in \cite{7}.
	
	Let us now specify the class of functions $f$ that can be treated by our technique.

	\begin{definition} [Piecewise Lipschitz vector function]\label{pf}  Let $p>0$. Say that $f:[0,1]^d \rightarrow[0,1]^d$ is Lipschitz if
		$$
		|f(\mathbf{x})-f(\mathbf{y})| \leq p|\mathbf{x}-\mathbf{y}|, \quad \forall \mathbf{x}, \mathbf{y} \in[0,1]^d \text {, } \mathbf{x} \neq \mathbf{y}.
		$$
		Furthermore, $f$ is piecewise Lipschitz if there exist finite measurable subsets $U_j$ in $[0,1]^d$ and $\left.f\right|_{U_j}: [0,1]^d \rightarrow [0,1]^d$ is a Lipschitz function, $j \in \mathcal{I}$, such that $\mu\left([0,1]^d \backslash \bigcup_{j}U_j \right)=0$.
	\end{definition}
	For example, when $d=1$, the function $f(x)=2x \bmod 1$ is piecewise Lipschitz but not Lipschitz on $[0,1)$.
	
	To avoid modifying the existing conditions in \cite{7}, we need to use a property that holds for almost all points in $[0,1]^d$ and we demand that the property remains  satisfied by almost all points and their images under $f$. To ensure this invariance, we introduce an additional technical assumption on $f$.
	
	\begin{definition} [$\mu \circ f^{-1} \ll \mu$] \label{ab}
		We say that $\mu \circ f^{-1}$ is absolutely continuous with respect to $\mu$ (written $\mu \circ f^{-1} \ll \mu$) if for any measurable subset $A \subset [0,1]^d$ with $\mu(A)=0$, $\mu \circ f^{-1} (A)=0$.
	\end{definition}

	\begin{remark} Throughout this paper, the notation $f^{-1}$ denotes the preimage operator under $f$, not the inverse function. Specifically, for any set $A \subset [0,1]^d$,  $f^{-1}(A)=\left\{\mathbf{x} \in[0,1]^d: f(\mathbf{x}) \in A\right\}$.
	\end{remark}
	
	\begin{remark} While the condition of Definition \ref{ab} on $f$ is not essential for establishing quantitative twisted recurrence properties for matrix transformations of tori, it becomes indispensable when relaxing the regularity assumptions on the density of $\mu$ as in \cite{7}.
		In fact, many functions satisfy $m_d \circ f^{-1} \ll m_d$ and we will introduce such functions in Section \ref{abp}.
	\end{remark}
	We now turn to our main theorems.
	
	\begin{thm} \label{12}
		Let $T:[0,1]^d \rightarrow[0,1]^d$ be a piecewise expanding map. Suppose that the absolutely continuous (with respect to $m_d$) $T$-invariant probability measure $\mu$ is exponentially mixing. Let $f: [0,1]^d \rightarrow[0,1]^d$ be a piecewise  Lipschitz vector function. Assume that one of the following conditions holds.
		\begin{enumerate}
			\item The density $h$ of $\mu$ belongs to $L^q\left(m_d\right)$ for some $q>1$. And $m_d \circ f^{-1} \ll m_d$.  \label{121}
			\item The density $h(\mathbf{x})$ of $\mu$ is bounded from above by $\mathfrak{c} > 0$ for all $\mathbf{x} \in[0,1]^d$.  \label{122}
		\end{enumerate}
		For a sequence of vectors $\left\{\mathbf{r}_n\right\}$ in $\left(\mathbb{R}_{\geq 0}\right)^d $ satisfying the conditons that the sequence $\left\{\frac{\left|\mathbf{r}_n\right|}{\left|\mathbf{r}_n\right|_{\text {min }}}\right\}$ is bounded and 
		$\lim _{n \rightarrow \infty}\left|\mathbf{r}_n\right|=0$, we have
		\begin{equation} \label{111}
			\mu\left(\mathcal{R}^f\left(\left\{\mathbf{r}_n\right\}\right)\right)= \begin{cases}0 & \text { if } \sum_{n=1}^{\infty} r_{n, 1} \cdots r_{n, d}<\infty, \\ 1 & \text { if } \sum_{n=1}^{\infty} r_{n, 1} \cdots r_{n, d}=\infty .\end{cases}
		\end{equation}
		Furthermore, if we further suppose that $f(\mathbf{x})=\left(f_{1^{'}}\left(x_1\right), f_{2^{'}}\left(x_2\right), \cdots, f_{d^{'}}\left(x_d\right)\right)$, for a sequence of vectors $\left\{\mathbf{r}_n\right\}$ in $\left(\mathbb{R}_{\geq 0}\right)^d $ with
		$\lim _{n \rightarrow \infty}\left|\mathbf{r}_n\right|=0$,  the conclusion \eqref{111} holds.
	\end{thm}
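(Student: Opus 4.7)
My plan is to establish Theorem~\ref{12} via the two halves of the Borel--Cantelli lemma, applied to $E_n:=\{\mathbf{x}\in[0,1]^d:T^n\mathbf{x}\in R(f(\mathbf{x}),\mathbf{r}_n)\}$, so that $\mathcal{R}^f(\{\mathbf{r}_n\})=\limsup_n E_n$. The central technical step is a two-sided measure estimate
\[
c\prod_{i=1}^{d}r_{n,i}-C_1\phi(n)\;\le\;\mu(E_n)\;\le\;C\prod_{i=1}^{d}r_{n,i}+C_1\phi(n),
\]
with $\phi(n)=ce^{-\tau n}$ supplied by Definition~\ref{mixing}. To prove it I would restrict to a single Lipschitz piece of $f$ (finitely many exhaust $[0,1]^d$ up to a $\mu$-null set, by Definition~\ref{pf}) and partition that piece into axis-parallel boxes $\{Q_k\}$ of side comparable to $|\mathbf{r}_n|_{\min}/p$, where $p$ is the Lipschitz constant. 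For $\mathbf{x}\in Q_k$ the Lipschitz bound $|f(\mathbf{x})-f(\mathbf{x}_k)|\lesssim|\mathbf{r}_n|_{\min}$, combined with the bounded aspect ratio hypothesis, upgrades to the uniform inclusions $R(f(\mathbf{x}_k),\mathbf{r}_n/2)\subseteq R(f(\mathbf{x}),\mathbf{r}_n)\subseteq R(f(\mathbf{x}_k),2\mathbf{r}_n)$. Since axis-parallel rectangles have uniformly bounded $(d-1)$-dimensional upper Minkowski content, both $Q_k$ and $R(f(\mathbf{x}_k),2\mathbf{r}_n)$ lie in the mixing class $\mathcal{C}$, and exponential mixing delivers $\mu(Q_k\cap T^{-n}R(f(\mathbf{x}_k),2\mathbf{r}_n))\le(\mu(Q_k)+\phi(n))\mu(R(f(\mathbf{x}_k),2\mathbf{r}_n))$. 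Summing over $k$ and using $\#\{Q_k\}\asymp|\mathbf{r}_n|_{\min}^{-d}$ together with $\prod r_{n,i}\asymp|\mathbf{r}_n|_{\min}^{d}$ produces the upper estimate; the mirror argument with the inner inclusion produces the lower one.

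The target measure $\mu(R(f(\mathbf{x}_k),2\mathbf{r}_n))$ is where the two density hypotheses enter. Case~(\ref{122}) gives $\mu(R(f(\mathbf{x}_k),2\mathbf{r}_n))\le\mathfrak{c}\cdot 4^d\prod r_{n,i}$ directly. For case~(\ref{121}) I would run the argument on the sublevel set $G_M:=\{h\le M\}$, on which $\mu(A)\le M\,m_d(A)$, and exploit $m_d\circ f^{-1}\ll m_d$ to show that $\mu(f^{-1}([0,1]^d\setminus G_M))\to0$ as $M\to\infty$, so that the discarded contribution is $\mu$-negligible; this is exactly the role of Definition~\ref{ab} foreshadowed in the remark following it. With the two-sided estimate in hand, summability of $\prod r_{n,i}$ and of $\phi(n)$ gives $\sum_n\mu(E_n)<\infty$, and the first Borel--Cantelli lemma delivers the convergence conclusion $\mu(\mathcal{R}^f)=0$.

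For the divergence conclusion I would pair the lower estimate with a quasi-independence bound
\[
\mu(E_n\cap E_m)\le\mu(E_n)\mu(E_m)+C\phi(m-n)\Bigl(\textstyle\prod r_{n,i}+\prod r_{m,i}\Bigr)\qquad(m>n),
\]
obtained by re-running the boxing argument on pairs $Q_k^{(n)}\cap T^{-n}Q_\ell^{(m-n)}$ with mixing applied at the time gap $m-n$. Localising the same machinery inside any small ball $B$ of positive $\mu$-measure and applying Chung--Erd\H{o}s yields $\mu(\mathcal{R}^f\cap B)\ge c\,\mu(B)$ for a constant $c>0$ independent of $B$, and a standard density / Lebesgue-differentiation argument then forces $\mu(\mathcal{R}^f)=1$. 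For the \emph{Furthermore} clause, where the bounded aspect ratio is dropped in favour of the separability $f(\mathbf{x})=(f_{1'}(x_1),\ldots,f_{d'}(x_d))$, I would replace the cubical boxes $Q_k$ by anisotropic boxes of sides $r_{n,i}/p_i$: because each $f_{i'}$ depends only on $x_i$, the Lipschitz bound becomes $|f_{i'}(x_i)-f_{i'}(x_k^{(i)})|\le r_{n,i}$ coordinate-wise, the rectangular inclusions above are preserved, and the rest of the argument transfers unchanged.

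The main obstacle I anticipate is controlling the error in the quasi-independence estimate: the number-of-boxes factor $\#\{Q_k^{(n)}\}\cdot\#\{Q_\ell^{(m-n)}\}\asymp|\mathbf{r}_n|_{\min}^{-d}|\mathbf{r}_m|_{\min}^{-d}$ multiplying $\phi(m-n)$ must stay summable over the index pairs $(n,m)$, which is precisely why the divergence part demands exponential rather than merely summable mixing, and why the regime in which $|\mathbf{r}_n|_{\min}$ decays faster than an appropriate power of $e^{-\tau n}$ has to be handled separately by a trivial inclusion-and-discard argument balancing partition scale against the time gap.
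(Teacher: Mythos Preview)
Your convergence outline is close to the paper's, but the divergence argument has a genuine gap. The ``mirror argument'' for the lower estimate $\mu(B\cap E_n)\ge c\,\mu(B)\prod_i r_{n,i}$ rests on bounding $\mu(R(f(\mathbf{x}_k),\mathbf{r}_n/2))$ from below by a constant times $\prod_i r_{n,i}$. Neither hypothesis supplies this: Condition~(\ref{121}) says only $h\in L^q$, and Condition~(\ref{122}) says only $h\le\mathfrak{c}$; neither bounds $h$ from below, so the target rectangle may carry arbitrarily little $\mu$-mass. The paper's device for this is to replace $E_n$ by auxiliary sets $\hat E_n=\{\mathbf{x}:T^n\mathbf{x}\in R(f(\mathbf{x}),\xi_n(\mathbf{x}))\}$, where $\xi_n(\mathbf{x})=l_n(\mathbf{x})\mathbf{r}_n$ is chosen (via the intermediate value theorem) so that $\mu(R(f(\mathbf{x}),\xi_n(\mathbf{x})))=\prod_i r_{n,i}$ \emph{exactly}. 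This builds the correct target measure in by fiat, yielding the two-sided estimate $\mu(B\cap\hat E_n)\asymp\mu(B)\prod_i r_{n,i}$ (Lemma~\ref{l37}) with no lower bound on $h$ needed. One then proves $\mu(\hat{\mathcal R}^f)=1$ by Paley--Zygmund and transfers back to $\mathcal R^f$ via a rescaling trick (Lemma~\ref{l34}) that uses Zygmund differentiation and $m_d\circ f^{-1}\ll m_d$.

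Two further points. First, your quasi-independence sketch underestimates what is needed to place the relevant sets in the mixing class: applying mixing to $B(\mathbf{x}_i,r)\cap T^{-m}R_i(m)\cap T^{-n}R_i(n)$ requires a uniform Minkowski-content bound on sets of the form $J_{n-m}\cap R_1\cap T^{-(n-m)}R_2$ with $J_{n-m}$ a cylinder, which is a non-trivial lemma (Lemma~\ref{l38}); the pair estimate then splits into two regimes, $m\ge\phi_1^{-1}(n^{-\text{power}})$ and $m<\phi_1^{-1}(n^{-\text{power}})$, handled separately (Lemmas~\ref{l39} and~\ref{l311}). Second, your $G_M=\{h\le M\}$ stratification for the convergence part does not quite work as stated: knowing $h(f(\mathbf{x}_k))\le M$ does not bound $\int_{R(f(\mathbf{x}_k),2\mathbf{r}_n)}h\,dm_d$, since $h$ may blow up elsewhere on the rectangle. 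The paper instead stratifies via Zygmund differentiation into sets $M(k,l)=\{\mathbf{x}:\mu(R(f(\mathbf{x}),\mathbf{r}_n))\le k\prod_i r_{n,i}\ \text{for all }n\ge l\}$, which directly encode the needed rectangle-measure bound.
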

	
	\begin{thm} \label{13}
		Let $T:[0,1]^d \rightarrow[0,1]^d$ be a piecewise expanding map. Suppose that the absolutely continuous (with respect to $m_d$) $T$-invariant probability measure $\mu$ is exponentially mixing. Let $f: [0,1]^d \rightarrow[0,1]^d$ be a piecewise  Lipschitz vector function.
		Assume that one of the following conditions holds.
		\begin{enumerate}
			\item  There exists an open set $V$ with $\mu(V)=1$ such that the density $h$ of $\mu$, when restricted to $V$, is bounded from above by $\mathfrak{c} \geq 1$ and from below by $\mathfrak{c}^{-1}$. And $\mu \circ f^{-1} \ll \mu$. \label{131}
			\item The density $h(\mathbf{x})$ of $\mu$ is bounded from above by $\mathfrak{c} \geq 1$ and bounded from below by $\mathfrak{c}^{-1}$ for all $\mathbf{x} \in[0,1]^d$. \label{132}
		\end{enumerate}
		For  a sequence of non-negative real numbers $\left\{\delta_n\right\}$ with $
		\lim _{n \rightarrow \infty} \delta_n=0
		$, we have 
		$$
		\mu\left(\mathcal{R}^{f\times}\left(\left\{\delta_n\right\}\right)\right)= \begin{cases}0 & \text { if } \sum_{n=1}^{\infty} \delta_n\left(-\log \delta_n\right)^{d-1}<\infty, \\ 1 & \text { if } \sum_{n=1}^{\infty} \delta_n\left(-\log \delta_n\right)^{d-1}=\infty .\end{cases}
		$$
	\end{thm}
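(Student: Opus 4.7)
My plan is to prove Theorem \ref{13} by combining the hyperboloid volume estimate $m_d\bigl(H(\mathbf{y},\delta)\bigr) \asymp \delta(-\log\delta)^{d-1}$ with the cube-decomposition and mixing machinery already established (in spirit) for Theorem \ref{12}. The convergence half will follow from a standard Borel--Cantelli lemma; the divergence half will use a Chung--Erd\H os style quantitative Borel--Cantelli lemma plus a zero--one law coming from exponential mixing (via Proposition \ref{l:Li}).

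Before either half, I would first do two preparatory calculations. First, a direct Fubini computation shows $c_1\delta(-\log\delta)^{d-1} \le m_d\bigl(H(\mathbf{0},\delta)\bigr) \le c_2\delta(-\log\delta)^{d-1}$ for $\delta$ small; combined with the bounds on $h$ this transfers to $\mu\bigl(H(\mathbf{y},\delta)\bigr) \asymp \delta(-\log\delta)^{d-1}$, uniformly in $\mathbf{y}$ in the good set (under (\ref{131}) this requires $\mathbf{y}\in V$, which is where the assumption $\mu\circ f^{-1}\ll\mu$ enters: it guarantees $f(\mathbf{x})\in V$ for $\mu$-a.e.\ $\mathbf{x}$). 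Second, I would establish the translation stability
\[
H(f(\mathbf{c}),\delta/2) \;\subseteq\; H(f(\mathbf{x}),\delta) \;\subseteq\; H(f(\mathbf{c}),C\delta) \qquad \text{whenever } |\mathbf{x}-\mathbf{c}|\le r \le \delta/(dp),
\]
by expanding $\prod_i(z_i+v_i) = \sum_{S\subseteq[d]}\prod_{i\in S}z_i\prod_{j\notin S}v_j$ with $|v_j|\le pr$ and noting that all ``mixed'' terms are bounded by $O(r)=O(\delta)$, while $|\prod_i z_i|\le \delta$.

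For the \textbf{convergence part}, put $A_n:=\{\mathbf{x}:T^n\mathbf{x}\in H(f(\mathbf{x}),\delta_n)\}$ and cover $[0,1]^d$ by cubes $\{C_\alpha\}$ of diameter $r_n$, choosing $r_n = \min(\delta_n/(dp),e^{-\alpha n})$ for some $\alpha\in(0,\tau/d)$ so that both $r_n\le\delta_n/(dp)$ and $r_n^{-d}\phi(n)$ is summable. The inclusion above gives $A_n\cap C_\alpha\subseteq C_\alpha\cap T^{-n}H(f(\mathbf{c}_\alpha),C\delta_n)$, and exponential mixing yields
\[
\mu\bigl(C_\alpha\cap T^{-n}H(f(\mathbf{c}_\alpha),C\delta_n)\bigr) \;\le\; \mu(C_\alpha)\,\mu\bigl(H(f(\mathbf{c}_\alpha),C\delta_n)\bigr) + \mu\bigl(H(f(\mathbf{c}_\alpha),C\delta_n)\bigr)\phi(n).
\]
Summing over $\alpha$ and using the density upper bound, $\mu(A_n)\lesssim \delta_n(-\log\delta_n)^{d-1}\bigl(1+r_n^{-d}\phi(n)\bigr)$. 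A two-case analysis (according as $\delta_n \gtrsim e^{-\alpha n}$ or not) shows this sums; the excluded indices contribute at most $\sum_n e^{-\alpha n}n^{d-1}<\infty$ since $g(\delta)=\delta(-\log\delta)^{d-1}$ is monotone for small $\delta$. Borel--Cantelli finishes.

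The \textbf{divergence part} is more delicate and will rely on a Chung--Erd\H os inequality
\[
\mu(\limsup A_n) \;\ge\; \limsup_{N\to\infty}\frac{\bigl(\sum_{n=1}^N\mu(A_n)\bigr)^{2}}{\sum_{n,m=1}^N\mu(A_n\cap A_m)},
\]
combined with ergodicity to upgrade positive measure to full measure. For the lower bound on $\mu(A_n)$, I use the reverse inclusion and the lower bound $h\ge\mathfrak{c}^{-1}$: restricting to indices $n$ with $\delta_n\ge e^{-\tau n/(2d)}$ (the remaining indices contribute summably to $\sum\delta_n(-\log\delta_n)^{d-1}$ and so may be discarded), I can pick $r_n$ so that $\mu(C_\alpha)\ge 2\phi(n)$, and mixing from below yields $\mu(A_n)\gtrsim\delta_n(-\log\delta_n)^{d-1}$. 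For the pair bound, I write $A_n\cap A_m\cap C_\alpha\subseteq C_\alpha\cap T^{-n}\bigl(G_n\cap T^{-(m-n)}G_m\bigr)$ with $G_k=H(f(\mathbf{c}_\alpha),C\delta_k)$, apply mixing with gap $m-n$ on the inner intersection and gap $n$ on the outer one, and sum to obtain $\mu(A_n\cap A_m) \lesssim \mu(A_n)\mu(A_m) + \text{summable error}$.

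The \textbf{main obstacle} is the quasi-independence step: hyperboloids have unbounded aspect ratio and their boundary Minkowski content grows like $(-\log\delta)^{d-1}$, so one cannot naively reduce to the bounded-aspect-ratio regime of Theorem \ref{12}. The saving grace is that the mixing error in Definition \ref{mixing} scales with $\mu(G)$ rather than with the Minkowski content of $\partial G$, so the logarithmic blow-up is absorbed, provided we apply mixing to $H(f(\mathbf{c}_\alpha),C\delta_k)$ \emph{directly} (after verifying this set lies in the admissible class $\mathcal C$ of Proposition \ref{l:Li}) rather than to some dyadic decomposition of it. A related subtlety under hypothesis (\ref{131}) is that the lower density bound is only valid on $V$, so all the points $f(\mathbf{c}_\alpha)$ at which we apply it must lie in $V$; this is precisely what $\mu\circ f^{-1}\ll\mu$ buys us after discarding a $\mu$-null set of cubes.
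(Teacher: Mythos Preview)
Your overall architecture---volume estimate for $H(\mathbf y,\delta)$, cube decomposition to freeze the center $f(\mathbf x)$, Borel--Cantelli for convergence, and a Chung--Erd\H{o}s/Paley--Zygmund second-moment argument for divergence---is exactly the paper's route. However, two steps in the divergence part are genuine gaps, and one side remark is incorrect.

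\textbf{The pair estimate does not follow from ``mixing twice'' as stated.} You want to bound $\mu\bigl(C_\alpha\cap T^{-n}(G_n\cap T^{-(m-n)}G_m)\bigr)$ by applying mixing with gap $n$ on the outer intersection. But the mixing hypothesis (Definition~\ref{mixing}) requires \emph{both} sets to lie in the bounded-Minkowski-content class $\mathcal C$, and the set $G_n\cap T^{-(m-n)}G_m$ does not: the boundary of $T^{-(m-n)}G_m$ contains the discontinuity set of $T^{m-n}$, whose $(d-1)$-Minkowski content grows with $m-n$. The paper gets around this by decomposing over the cylinders $J\in\mathcal F_{n-m}$ and proving (the hyperboloid analogue of Lemma~\ref{l38}) that each piece $J\cap G_n\cap T^{-(n-m)}G_m$ \emph{does} lie in the bounded class~$(\mathbf{P2})$. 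The sum over cylinders then introduces a factor $\#\mathcal F_{n-m}\le Q^{n-m}$, which forces a further case split: for $m\ge \phi_1^{-1}(n^{-4d})$ one argues as in Lemma~\ref{l46}, while for $m<\phi_1^{-1}(n^{-4d})$ a separate three-set mixing estimate (Lemma~\ref{l47}, yielding Lemma~\ref{l48}) is needed so that $\#\mathcal F_m\cdot\phi(n)$ stays small. This cylinder-decomposition step is the technical heart of the argument and is absent from your sketch.

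\textbf{Ergodicity does not give the zero--one upgrade.} The limsup set $\limsup_n D_n$ is not $T$-invariant, since $T\mathbf x\in D_n$ means $T^{n+1}\mathbf x\in H(f(T\mathbf x),\delta_n)$, which bears no relation to $D_{n+1}$. The paper instead runs the Paley--Zygmund argument \emph{locally}, on every small ball $B$ centred in $V_{\delta_1}\cap W_{\delta_2}$, to obtain $\mu(B\cap\limsup D_n)\ge\alpha\,\mu(B)$ with $\alpha$ independent of $B$ (Lemma~\ref{l44}); the Lebesgue density theorem then forces $\mu(\limsup D_n)=1$.

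\textbf{Two minor points.} First, your own expansion $\prod_i(z_i+v_i)=\sum_S\prod_{i\in S}z_i\prod_{j\notin S}v_j$ actually gives the \emph{additive} inclusion $H(f(\mathbf x),\delta)\subseteq H(f(\mathbf c),\delta+(2p)^dr)$ with no constraint on $r$ versus $\delta$ (this is the paper's Lemma~\ref{l42}); using it with $r=n^{-2}$ avoids your two-case analysis entirely, since the extra term $n^{-2}(\log n)^{d-1}$ is summable on its own. Second, contrary to your stated obstacle, $M^{*(d-1)}(\partial H(\mathbf y,\delta))$ is uniformly bounded in $\delta$ (the paper records this as $K_1$ at the start of Section~4), not growing like $(-\log\delta)^{d-1}$; so hyperboloids lie in $\mathcal C$ directly and the difficulty is only with their iterated preimages.
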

	
	\begin{remark}
		When the targets are coordinate-parallel hyperrectangles, the Zygmund differentiation theorem (see Theorem \ref{ZG}) applies. Consequently, we adopt a weaker density assumption (Condition \ref{121} of Theorem \ref{12}), which aligns with the regularity theorem in \cite{7}. However, the assumption necessitates the constraint $m_d \circ f^{-1} \ll m_d$ on $f$, which also can unify recurrence, the shrinking target problem for $m_d-a.e. y \in [0,1]^d$, and the cross-component recurrence and reveals that the three problems can coexist in distinct directions on $[0,1]^d$ for some cases, see details in Section \ref{abp}. When the density assumption is strengthened to Condition \ref{122} of Theorem \ref{12} (particularly suited to matrix transformations of tori), the constraint $m_d \circ f^{-1} \ll m_d$ on $f$ can be removed.
	\end{remark}
	
	\begin{remark}
		Theorem \ref{12} mainly applies to targets that are coordinate-parallel hyperrectangles with bounded aspect ratio (including balls). For piecewise Lipschitz coordinate-wise vector functions $f$, Theorem \ref{12} remains valid for general coordinate-parallel hyperrectangles. In this case, we can unify the shrinking target problem and recurrence, and show that the two problems can coexist in distinct directions on $[0,1]^d$, but cross-component recurrence is absent.
	\end{remark}
	
	\begin{remark}
		For the hyperboloid set, no direct analogue diﬀerentiation theorem is presently available for us. Under Condition \ref{131} of Theorem \ref{13} that aligns with the regularity assumption in \cite{7}, we can only unify recurrence and the shrinking target problem for $\mu-a.e. y \in [0,1]^d$ as best as we can.
	\end{remark}
	As direct consequences of Theorems \ref{12} and \ref{13}, we obtain the following corollaries.
	\begin{corollary}
		Let $\mathbf{T}$ be a $d \times d$ real, non-singular  matrix with the modulus of all eigenvalues strictly larger than 1. Let $f: [0,1)^d \rightarrow[0,1)^d$ be a piecewise  Lipschitz vector function. Suppose that $\mathbf{T}$ satisfies one of the following conditions. 
		\begin{enumerate}
			\item All eigenvalues of $\mathbf{T}$ are of modulus strictly larger than $1+\sqrt{d}$. 
			\item $\mathbf{T}$ is diagonal. 
			\item $\mathbf{T}$ is an integer matrix. 
		\end{enumerate}
		For a sequence of vectors $\left\{\mathbf{r}_n\right\}$ in $\left(\mathbb{R}_{\geq 0}\right)^d $ satisfying the conditons that the sequence $\left\{\frac{\left|\mathbf{r}_n\right|}{\left|\mathbf{r}_n\right|_{\text {min }}}\right\}$ is bounded and 
		$\lim _{n \rightarrow \infty}\left|\mathbf{r}_n\right|=0$, we have
		\begin{equation} \label{112}
			\mu\left(\mathcal{R}^f\left(\left\{\mathbf{r}_n\right\}\right)\right)= \begin{cases}0 & \text { if } \sum_{n=1}^{\infty} r_{n, 1} \cdots r_{n, d}<\infty, \\ 1 & \text { if } \sum_{n=1}^{\infty} r_{n, 1} \cdots r_{n, d}=\infty .\end{cases}
		\end{equation}
		Furthermore, if we further suppose that $f(\mathbf{x})=\left(f_{1^{'}}\left(x_1\right), f_{2^{'}}\left(x_2\right), \cdots, f_{d^{'}}\left(x_d\right)\right)$, for a sequence of vectors $\left\{\mathbf{r}_n\right\}$ in $\left(\mathbb{R}_{\geq 0}\right)^d $ with
		$\lim _{n \rightarrow \infty}\left|\mathbf{r}_n\right|=0$, 
		the conclusion \eqref{112} holds.
	\end{corollary}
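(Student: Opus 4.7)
The strategy is to check that each of the three hypotheses on $\mathbf{T}$ brings the matrix transformation $T:\mathbf{x}\mapsto \mathbf{x}\mathbf{T}\bmod 1$ into the scope of Theorem \ref{12}. In every case, the remark following Definition \ref{pem} already ensures that $T$ is a piecewise expanding map on $[0,1)^d$, and Proposition \ref{l:Li} then supplies an absolutely continuous $T$-invariant probability measure $\mu$ together with a decomposition of its support into finitely many mixing components on which $\mu$ is exponentially mixing with respect to a suitable iterate of $T$ and to any collection of sets satisfying the bounded-Minkowski-content property $(\mathbf{B})$. To invoke Theorem \ref{12} under the cleaner Condition (\ref{122}), which avoids the auxiliary hypothesis $m_d\circ f^{-1}\ll m_d$, I plan to verify in each case that the density $h$ of $\mu$ is bounded from above by some constant $\fc>0$.

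The verification of boundedness of $h$ proceeds case by case. If $\mathbf{T}$ is an integer matrix with eigenvalues of modulus greater than $1$, then $T$ preserves Lebesgue measure on $[0,1)^d$, so $h\equiv 1$. If $\mathbf{T}=\mathrm{diag}(\beta_1,\ldots,\beta_d)$ with $|\beta_i|>1$, then $T$ factorises as a product of one-dimensional $\beta_i$-transformations on the circle; the invariant densities of the one-dimensional factors are known to be uniformly bounded by an explicit constant depending only on the $\beta_i$, and the invariant density for $T$ is the tensor product of these, hence still bounded. If every eigenvalue of $\mathbf{T}$ has modulus strictly greater than $1+\sqrt{d}$, then $T$ falls within the class treated by Saussol's theorem on multidimensional piecewise expanding maps, whose existence result yields an invariant density lying in a quasi-H\"older space, and in particular bounded from above. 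In each of the three cases, Condition (\ref{122}) of Theorem \ref{12} is therefore met, while the piecewise-expanding property and the exponential mixing of $\mu$ are furnished by the earlier general machinery. An application of Theorem \ref{12} completes the first assertion, and the second (coordinate-wise $f$) assertion follows from the corresponding second part of Theorem \ref{12} in exactly the same fashion.

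The main technical point to address in the execution is the reduction from the mixing-component statement of Proposition \ref{l:Li} --- where exponential mixing holds only on each mixing component $A_{ij}$ and with respect to the iterate $T^{p_i}$ --- to the global exponential-mixing assumption of Theorem \ref{12}. I expect to handle this in the standard way, by applying Theorem \ref{12} separately on each component $A_{ij}$ as an invariant system for $T^{p_i}$ and then reassembling the finitely many zero/full conclusions; the volume sums $\sum_n r_{n,1}\cdots r_{n,d}$ are preserved up to thinning along arithmetic progressions, which does not affect convergence or divergence. The genuine obstacle, however, is the Saussol case: one must identify the hypothesis ``all eigenvalues have modulus exceeding $1+\sqrt{d}$'' as precisely the expansion threshold at which Saussol's multidimensional existence theorem delivers a \emph{bounded} invariant density (and not merely one in some $L^q$), so that the stronger Condition (\ref{122}) rather than Condition (\ref{121}) of Theorem \ref{12} can be invoked; the diagonal and integer cases are comparatively elementary.
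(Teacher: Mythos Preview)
Your proposal is correct and follows essentially the same route as the paper: verify that under each of the three hypotheses the matrix map $T$ is piecewise expanding with an exponentially mixing acim whose density is bounded above, so that Condition~(\ref{122}) of Theorem~\ref{12} applies and the absolute-continuity constraint on $f$ is not needed. The only notable difference is that the paper treats the boundedness of $h$ uniformly by citing Saussol~\cite[Proposition~3.4 and Theorem~5.1(ii)]{27} for all three cases at once, rather than giving your case-by-case argument (integer $\Rightarrow h\equiv 1$, diagonal $\Rightarrow$ tensor of one-dimensional $\beta$-densities, eigenvalues $>1+\sqrt{d}$ $\Rightarrow$ quasi-H\"older); and the paper does not explicitly raise the mixing-component reduction you flag, simply asserting exponential mixing via Proposition~\ref{l:Li} and Saussol~\cite[Theorem~6.1]{27}.
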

	\begin{remark}
		The conditions stated in items (1)–(3) are consistent with those in \cite[Theorem 1.5]{7} and \cite[Theorem 3-5]{16}. According to Proposition \ref{l:Li} and Saussol's result \cite[Theorem 6.1]{27}, if $\mathbf{T}$ satisfies any of the three conditions above, then the absolutely continuous (with respect to $m_d$) $T$-invariant probability measure $\mu$ is exponentially mixing. Furthermore, Saussol \cite[Proposition 3.4 and Theorem 5.1 (ii)]{27} demonstrated that the density $h$ of $\mu$ is bounded from above. Hence, Theorem \ref{12} applies.
	\end{remark}
	
	\begin{remark}
		According to \cite[Proof of Lemma 3]{16}, if $\mathbf{T}$ satisfies the first conditon in the above corollary, then $\mu$ is the unique maximal entropy acim. Furthermore, according to \cite[Proof of Theorem 5]{16}, if $\mathbf{T}$ satisfies the third condition of the above corollary, then $h \equiv 1$, implying that $\mu$ is $m_d$.
	\end{remark}
	
	\begin{corollary}
		Let $\mathbf{T}$ be a $d \times d$ real, non-singular  matrix with the modulus of all eigenvalues strictly larger than 1. Let $f: [0,1)^d \rightarrow[0,1)^d$ be a piecewise  Lipschitz vector function. Suppose that $\mathbf{T}$ satisfies one of the following conditions.
		\begin{enumerate}
			\item $\mathbf{T}$ is diagonal with all eigenvalues belonging to $(-\infty,-(\sqrt{5}+1) / 2] \cup(1,+\infty)$. 
			\item $\mathbf{T}$ is an integer matrix. 
		\end{enumerate}
		For a sequence of non-negative real numbers $\left\{\delta_n\right\}$ with $
		\lim _{n \rightarrow \infty} \delta_n=0
		$,  we have 
		$$
		\mu\left(\mathcal{R}^{f \times}\left(\left\{\delta_n\right\}\right)\right)= \begin{cases}0 & \text { if } \sum_{n=1}^{\infty} \delta_n\left(-\log \delta_n\right)^{d-1}<\infty, \\ 1 & \text { if } \sum_{n=1}^{\infty} \delta_n\left(-\log \delta_n\right)^{d-1}=\infty.\end{cases}
		$$
	\end{corollary}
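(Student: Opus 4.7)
The plan is to derive this corollary as a direct application of Theorem \ref{13}, by verifying its three hypotheses — piecewise expanding, exponential mixing of the acim $\mu$, and either Condition \ref{131} or \ref{132} on the density of $\mu$ — under each of the two alternatives on $\mathbf{T}$. That $T\bx = \bx\mathbf{T} \bmod 1$ is piecewise expanding in the sense of Definition \ref{pem} is exactly the content of the remark following that definition. For exponential mixing, I would combine Proposition \ref{l:Li} with Saussol's result \cite[Theorem 6.1]{27} to conclude that $\mu$ is exponentially mixing against any family of Borel sets of uniformly bounded $(d-1)$-dimensional upper Minkowski content, which covers the hyperboloid targets appearing in Theorem \ref{13}.

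The density bounds are verified case by case. For the integer-matrix case (2), \cite[Proof of Theorem 5]{16} gives $\mu = m_d$ and hence $h \equiv 1$, so Condition \ref{132} of Theorem \ref{13} holds with $\mathfrak{c}=1$. For the diagonal case (1), the measure $\mu$ factors as a tensor product of one-dimensional $\beta$-transformation acims associated with the diagonal entries $\lambda_i$, and the explicit density formulas of \cite[Proposition 3.4 and Theorem 5.1(ii)]{27} provide an upper bound throughout along with a strictly positive lower bound on an open subset of $[0,1)$ of full mass — the threshold $(\sqrt{5}+1)/2$ on the negative eigenvalues is precisely the quantitative input that delivers this lower bound. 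Taking the coordinatewise product of the one-dimensional open sets furnishes an open $V \subset [0,1)^d$ with $\mu(V)=1$ on which $h$ satisfies the two-sided bound of Condition \ref{131}. The supplementary hypothesis $\mu \circ f^{-1} \ll \mu$ in \ref{131} can be dropped by the remark immediately following Definition \ref{ab}, which observes that this condition is not required when $T$ is a matrix transformation of the torus.

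The main obstacle I anticipate is the quantitative lower bound on the density in the diagonal case, which demands careful bookkeeping of the acim for $T_\beta$ at the golden-ratio threshold for negative $\beta$, so that the product density is bounded below by a positive constant on a full-mass open set of the torus. A secondary issue is making precise the claim that the hypothesis $\mu \circ f^{-1} \ll \mu$ is superfluous in the matrix-transformation setting: this would require identifying the step in the proof of Theorem \ref{13} where the hypothesis enters and checking that the corresponding assertion is automatic when $T$ is a linear toral endomorphism. Once both of these points are settled, Theorem \ref{13} applies directly and yields the stated zero-full dichotomy for $\mu\left(\mathcal{R}^{f\times}(\{\delta_n\})\right)$.
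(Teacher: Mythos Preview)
Your overall strategy --- deduce the corollary from Theorem \ref{13} by checking that $T$ is piecewise expanding, that $\mu$ is exponentially mixing, and that the density hypothesis holds --- matches the paper's. For the integer-matrix case (2) your argument is exactly the paper's: $\mu = m_d$, $h \equiv 1$, and Condition \ref{132} of Theorem \ref{13} applies.

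For the diagonal case (1), however, your route diverges from the paper and creates an unnecessary gap. The paper does not verify Condition \ref{131}; it invokes \cite[Proposition 2]{16} (not the Saussol results you cite, which in this paper are used only for the upper bound), which asserts that for diagonal $\mathbf{T}$ with eigenvalues in $(-\infty,-(\sqrt{5}+1)/2]\cup(1,+\infty)$ the Parry--Yrrap density $h$ is bounded above by some $\mathfrak{c}\ge 1$ and below by $\mathfrak{c}^{-1}$ on \emph{all} of $[0,1)^d$, not merely on an open full-measure subset. Hence Condition \ref{132} of Theorem \ref{13} holds directly, and the hypothesis $\mu\circ f^{-1}\ll\mu$ never enters.

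Your alternative plan --- verify only Condition \ref{131} and then discard the absolute-continuity hypothesis on $f$ by appealing to the remark after Definition \ref{ab} --- does not go through as written. That remark says the hypothesis is dispensable precisely when the density enjoys the stronger global two-sided bound (i.e.\ under Condition \ref{132}); it ``becomes indispensable when relaxing the regularity assumptions on the density of $\mu$,'' which is exactly the situation of Condition \ref{131}. Inspecting the proof of Theorem \ref{13} confirms this: under Condition \ref{131} the argument needs $W=f^{-1}(V)$ to have full $\mu$-measure (Section \ref{HC} and Lemma \ref{l44}), and that step uses $\mu\circ f^{-1}\ll\mu$ essentially. So if you insist on Condition \ref{131} you would be forced to impose $\mu\circ f^{-1}\ll\mu$, which the corollary does not assume for general piecewise Lipschitz $f$. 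The correct fix is simply to cite \cite[Proposition 2]{16} for the global two-sided density bound and apply Theorem \ref{13} under Condition \ref{132}, exactly as the paper does.
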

	
	\begin{remark}
		According to \cite[Proposition 2]{16}, if $\mathbf{T}$ satisfies the first condition of the above corollary, then the density $h(\mathbf{x})$ of $\mu$ is bounded from above by $\mathfrak{c} \geq 1$ and below by $\mathfrak{c}^{-1}$ for all $\mathbf{x} \in [0,1)^d$, where $\mu$ is the Parry-Yrrap measure. Thus, Theorem \ref{13} applies. 
	\end{remark}
	
	To enhance conciseness, we focus on proving  Theorem \ref{12} under Condition \ref{121}. Furthermore, when the assumption on the density $h$ is strengthened as in Condition \ref{122} of Theorem \ref{12}, we further demonstrate that the constraint $m_d \circ f^{-1} \ll m_d$ on  $f$ can be entirely eliminated. Additionally, throughout the proof, we also show that the bounded aspect ratio assumption can be removed if $f$ is a coordinate-wise vector function. And the proof of Theorem \ref{13} follows identical reasons. 
	
	The paper is organized as follows: Section \ref{abp} introduces a class of functions $f$ satisfying $m_d \circ f^{-1} \ll m_d$. Subsequently, Sections \ref{RC} and \ref{RD} shows the full proof of Theorem \ref{12}. Finally, Sections \ref{HC} and \ref{HD} establish the convergence and divergence part of Theorem \ref{13}, respectively. 
	
	\section{Further investigations on functions $f$ satisfying $\mu \circ f^{-1} \ll \mu$} \label{abp}
	
	\begin{lemma} \label{DJ} $\mu \circ f^{-1} \ll \mu$ if and only if, for $\Omega \subset[0,1]^d$ with $\mu(\Omega)=1$, $\mu \circ f^{-1}(\Omega)=1$.
	\end{lemma}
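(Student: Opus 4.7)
The plan is to exploit the fact that, because $f:[0,1]^d \to [0,1]^d$ is a (piecewise Lipschitz, hence measurable) self-map and $\mu$ is a probability measure on $[0,1]^d$, the push-forward $\mu \circ f^{-1}$ is itself a probability measure on $[0,1]^d$. Once this is noted, a set has $\mu \circ f^{-1}$-measure $1$ if and only if its complement has $\mu \circ f^{-1}$-measure $0$, and the stated equivalence reduces to the trivial observation that absolute continuity (null sets go to null sets) and the full-measure condition (full-measure sets go to full-measure sets) are swapped by taking complements.

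Concretely, for the forward implication I would start with $\Omega \subset [0,1]^d$ satisfying $\mu(\Omega) = 1$, set $A := [0,1]^d \setminus \Omega$ so that $\mu(A) = 0$, apply the hypothesis $\mu \circ f^{-1} \ll \mu$ from Definition \ref{ab} to conclude $\mu \circ f^{-1}(A) = 0$, and then use $\mu \circ f^{-1}([0,1]^d) = \mu(f^{-1}([0,1]^d)) = \mu([0,1]^d) = 1$ to get $\mu \circ f^{-1}(\Omega) = 1 - \mu \circ f^{-1}(A) = 1$.

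For the converse, I would take any measurable $A \subset [0,1]^d$ with $\mu(A) = 0$, set $\Omega := [0,1]^d \setminus A$ so that $\mu(\Omega) = 1$, invoke the hypothesis to obtain $\mu \circ f^{-1}(\Omega) = 1$, and conclude $\mu \circ f^{-1}(A) = 1 - \mu \circ f^{-1}(\Omega) = 0$, thereby verifying $\mu \circ f^{-1} \ll \mu$.

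There is essentially no obstacle here; the statement is a purely formal duality between null and conull sets under a push-forward of a probability measure. The only point worth flagging during the write-up is that $\mu \circ f^{-1}$ is indeed a probability measure on $[0,1]^d$, which is immediate from the measurability of $f$ and $f([0,1]^d) \subset [0,1]^d$.
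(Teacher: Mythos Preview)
Your proof is correct and follows essentially the same approach as the paper: both arguments rest on the identity $\mu \circ f^{-1}(\Omega) = 1 - \mu \circ f^{-1}(\Omega^c)$, which the paper derives via a short characteristic-function computation and you obtain directly from the observation that $\mu \circ f^{-1}$ is a probability measure. If anything, your write-up is more explicit in spelling out both directions of the equivalence, whereas the paper simply records the complementation identity and declares the result clear.
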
 
	
	\begin{proof}  We begin by noting that 
		$$
		\begin{aligned}
			& \mu \circ f^{-1}(\Omega)=\int \chi_{f^{-1}(\Omega)}(\mathbf{x}) d \mu(\mathbf{x})=\int \chi_{\Omega}(f(\mathbf{x})) d \mu(\mathbf{x}) \\
			& =\int \chi_{[0.1]^d}(f(\mathbf{x}))-\chi_{\Omega^c}(f(\mathbf{x})) d \mu(\mathbf{x})=\int \chi_{[0,1]^d}(f(\mathbf{x})) d \mu(\mathbf{x})-\int \chi_{f^{-1}\left(\Omega^c\right)}(\mathbf{x}) d \mu(\mathbf{x}) \\
			& =1-\mu \circ f^{-1}\left(\Omega^c\right) .
		\end{aligned}
		$$
		Here, $\chi: [0,1]^d \rightarrow\{0,1\}$ is a characteristic function.  Clearly, the lemma holds and we conclude that if $\mu(\Omega) = 1$, then $\mu \circ f^{-1}(\Omega) = 1$;  that is, $f$ satisfying Definition \ref{ab} is also a full-$\mu$-measure-preserving vector function.
	\end{proof}
	
	\begin{thm}[{\cite[Theorem 2.29 (Zygmund differentiation theorem)]{28}}] \label{ZG}  Let $\left\{\mathbf{r}_n\right\}$ be a sequence of positive vectors with $\lim _{n \rightarrow \infty}\left|\mathbf{r}_n\right| \rightarrow 0$. If $h \in L^q\left(m_d\right)$ for some $q>1$, then
		$$
		\lim _{n \rightarrow \infty} \frac{\int_{R\left(\mathbf{x}, \mathbf{r}_n\right)} h(\mathbf{z}) \mathrm{d} m_d(\mathbf{z})}{m_d\left(R\left(\mathbf{x}, \mathbf{r}_n\right)\right)}=h(\mathbf{x}) \quad \text { for } m_d \text {-a.e. } \mathbf{x}\text {. } 
		$$
	\end{thm}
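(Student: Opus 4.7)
The plan is to follow the classical Jessen--Marcinkiewicz--Zygmund strategy via the strong maximal function. Define
\[
M_s h(\bx) := \sup_{R \ni \bx} \frac{1}{m_d(R)} \int_R |h(\bz)| \dif m_d(\bz),
\]
where the supremum ranges over all coordinate-parallel rectangles $R \subset [0,1]^d$ containing $\bx$. The first step is to establish, for each $q>1$, a constant $C_q$ with $\|M_s h\|_{L^q(m_d)} \le C_q \|h\|_{L^q(m_d)}$. The key is that any rectangle factors as a product of intervals, which yields the pointwise bound
\[
M_s h(\bx) \le \bigl(M^{(1)} \circ M^{(2)} \circ \cdots \circ M^{(d)} h\bigr)(\bx),
\]
where $M^{(i)}$ is the one-dimensional Hardy--Littlewood maximal operator acting in the $i$-th coordinate (with the remaining coordinates frozen). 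Each $M^{(i)}$ is bounded on $L^q(m_d)$ for $q>1$ by invoking the one-dimensional theorem slicewise and integrating via Fubini; composing gives the desired $L^q$-bound.

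With the maximal inequality in hand, the differentiation statement follows by a standard density argument. Since continuous functions are dense in $L^q(m_d)$, write $h = g + (h-g)$ with $g$ continuous and $\|h-g\|_{L^q} < \varepsilon$. For $g$, uniform continuity on $[0,1]^d$ forces the averages over $R(\bx, \br_n)$ to converge to $g(\bx)$ uniformly whenever $|\br_n| \to 0$. The averaging operator applied to $h-g$ is dominated pointwise by $M_s(h-g)$; by Markov's inequality and the maximal bound, the set where $\limsup_n$ of the averages of $h-g$ exceeds $\lambda$ has $m_d$-measure $\lesssim \lambda^{-q} \varepsilon^{q}$. Sending $\varepsilon \to 0$ and intersecting over a countable sequence $\lambda \downarrow 0$ yields the conclusion for $m_d$-a.e.\ $\bx$.

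\textbf{Main obstacle.} The chief difficulty is precisely the $L^q$-boundedness of $M_s$, which genuinely requires $q>1$: unlike for cubes (where the Vitali covering lemma yields weak type $(1,1)$), the strong maximal function fails to be of weak type $(1,1)$ by Saks's classical counterexample, and the iteration of the one-dimensional maximal inequalities collapses at the endpoint. Technically, the iteration step requires careful slice-by-slice measurability and a Fubini-type interchange between the supremum defining $M^{(i)}$ and the integration in the remaining variables, which must be justified via a monotone approximation by sub-suprema over countable families of rational-endpoint intervals. This is exactly why the hypothesis $h \in L^q(m_d)$ with $q>1$, rather than $h \in L^1(m_d)$, is indispensable in the statement.
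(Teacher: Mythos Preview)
Your proposal is correct and follows the classical Jessen--Marcinkiewicz--Zygmund route; there is nothing to fault in the outline. However, note that the paper does not actually prove this theorem: it is quoted verbatim as \cite[Theorem 2.29]{28} and used as a black box, so there is no ``paper's own proof'' to compare against. Your argument is precisely the standard one found in the cited source (iterate the one-dimensional Hardy--Littlewood maximal inequality coordinate by coordinate to bound the strong maximal operator on $L^q$ for $q>1$, then run the usual density argument), and your identification of the obstruction at $q=1$ is accurate.
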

	\begin{remark} \label{CDXZ}
		Zygmund differentiation theorem applies specifically to coordinate-parallel hyperrectangles, and does not generally apply to other cases.
	\end{remark}
	
	Since $h$ belongs to $L^q\left(m_d\right)$ for some $q>1$ in Theorem \ref{12}, it follows from Theorem \ref{ZG} that 
	$$
	0 \leq \lim _{n \rightarrow \infty} \frac{\mu\left(R\left(\mathbf{x},  \mathbf{r}_n\right)\right)}{m_d\left(R\left(\mathbf{x}, \mathbf{r}_n\right)\right)}=\lim _{n \rightarrow \infty} \frac{\mu\left(R\left(\mathbf{x},  \mathbf{r}_n\right)\right)}{2^d r_{n, 1} \cdots r_{n, d}}=h(\mathbf{x})<\infty \quad \text { for } m_d \text {-a.e. } \mathbf{x} \in[0,1]^d\text {. }
	$$
	This implies that $\exists \Omega_1 \subset[0,1]^d$ with $\Omega_1=\bigcup_{k=1}^{\infty} \bigcup_{l=1}^{\infty} Z(k, l)$,  
	$$
	m_d \left(\bigcup_{k=1}^{\infty} \bigcup_{l=1}^{\infty} Z(k, l)\right)=1 \text {, }
	$$
	where
	$$
	Z(k, l):=\left\{\mathbf{x} \in[0,1]^d: \mu\left(R\left(\mathbf{x},  \mathbf{r}_n\right)\right) \leq k r_{n, 1} \cdots r_{n, d} \text { for all } n \geq l\right\} \text {. }
	$$
	Let $\mu$ be $m_d$ in Lemma \ref{DJ}, then $\exists \Omega_2 \subset[0,1]^d$, defined by $\Omega_2=f^{-1}(\bigcup_{k=1}^{\infty} \bigcup_{l=1}^{\infty} Z(k, l))=\bigcup_{k=1}^{\infty} \bigcup_{l=1}^{\infty} M(k, l)$, 
	$$ 
	m_d\left(\bigcup_{k=1}^{\infty} \bigcup_{l=1}^{\infty} M(k, l)\right)=1 \text {, }
	$$
	where
	$$
	M(k, l):=\left\{\mathbf{x} \in[0,1]^d: \mu\left(R\left(f(\mathbf{x}),  \mathbf{r}_n\right)\right) \leq k r_{n, 1} \cdots r_{n, d} \text { for all } n \geq l\right\} \text {. }
	$$
	Note that if $f(\mathbf{x}) \equiv \mathbf{y}$, where $\mathbf{y} \in \Omega_1$, we have $m_d \circ f^{-1}(\Omega_1)=1$. Thus, $f(\mathbf{x}) \equiv \mathbf{y}$, where $\mathbf{y} \in \Omega_1$, satisfies $m_d \circ f^{-1} \ll m_d$ by Lemma \ref{DJ}. If $f=I d_{[0,1]^d}$, $F$ naturally satisfies $m_d \circ f^{-1} \ll m_d$.

	\begin{remark}
		As noted above and under the Condition \ref{121} of Theorem \ref{12}, we can only limit the upper bound of the $\mu$-measure of hyperrectangles whose center lies in $\Omega_1$. Therefore, our primary focus is on unifying quantitative recurrence properties and the shrinking target problem on $\Omega_1$, where $m_d(\Omega_1)=1$. Similarly, under the Condition \ref{131} of Theorem \ref{13}, we are primarily concerned with  unifying quantitative recurrence properties and the shrinking target problem on $V$, where $\mu(V)=1$. For points $\mathbf{y} \in \Omega_1^C$ or $\mathbf{y} \in V^C$, although the density $h$ of $\mu$ satisfies different conditions at these points, according to \cite[Corollary 1]{16}, corresponding zero-full laws still hold. Furthermore, for the case of hyperrectangles, we can get the unity in form in the divergent part.
	\end{remark}
	
	\begin{thm}[{\cite[Corollary 1]{16}}] \label{01y} Let $(X, \mathcal{B}, \mu, T)$ be a measure-preserving dynamical system and $\mathcal{C}$ be a collection of subsets of $X$. Suppose that $\mu$ is $\Sigma$-mixing with respect to $(T, \mathcal{C})$ and let $\left\{E_n\right\}$ be a sequence of subsets in $\mathcal{C}$. Then
		$$
		\mu\left(W\left(T,\left\{E_n\right\}\right)\right)= \begin{cases}0 & \text { if } \sum_{n=1}^{\infty} \mu\left(E_n\right)<\infty \text{,}\\ 1 & \text { if } \sum_{n=1}^{\infty} \mu\left(E_n\right)=\infty \text{.}\end{cases}
		$$
		Here, $W\left(T,\left\{E_n\right\}\right)= \left\{x \in X: T^nx \in E_n \text  { for i.m. } n \in \mathbb{N}\right\}$\text{.}
	\end{thm}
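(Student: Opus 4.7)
The plan is to reduce the statement to a dynamical Borel--Cantelli zero-one law by setting $A_n := T^{-n}E_n$: by $T$-invariance of $\mu$ we have $\mu(A_n) = \mu(E_n)$, and $W(T,\{E_n\}) = \limsup_{n\to\infty} A_n$. The convergence half will then follow from the classical first Borel--Cantelli lemma applied to $\{A_n\}$, while the divergence half will rely on the Chung--Erd\H{o}s (Kochen--Stone) inequality combined with a quasi-independence estimate extracted from the $\Sigma$-mixing hypothesis.

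For the convergence part, if $\sum_{n=1}^\infty \mu(E_n) < \infty$ then $\sum_n \mu(A_n) < \infty$, and the first Borel--Cantelli lemma gives $\mu(\limsup A_n) = 0$, i.e.\ $\mu(W(T,\{E_n\})) = 0$.

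For the divergence part, set $S_N := \sum_{n=1}^N \mu(E_n)$ (which tends to $\infty$ by hypothesis) and $C := \sum_{k=1}^{\infty} \phi(k)$, which is finite by Definition \ref{mixing}(i). For $m < n$, $T$-invariance and the mixing bound \eqref{Eq.1.2} applied with $F = E_m$, $G = E_n$ at lag $n-m$ give
$$
\mu(A_m \cap A_n) \;=\; \mu\bigl(E_m \cap T^{-(n-m)} E_n\bigr) \;\leq\; \mu(E_m)\mu(E_n) + \mu(E_n)\phi(n-m).
$$
Summing over $1 \leq m, n \leq N$ and treating the diagonal via $\mu(A_n \cap A_n) = \mu(E_n)$,
$$
\sum_{m,n=1}^N \mu(A_m \cap A_n) \;\leq\; S_N^2 + 2 C S_N + S_N.
$$
Substituting into the Chung--Erd\H{o}s inequality,
$$
\mu\bigl(\limsup_{n\to\infty} A_n\bigr) \;\geq\; \limsup_{N\to\infty} \frac{S_N^2}{S_N^2 + 2 C S_N + S_N} \;=\; 1,
$$
because $S_N \to \infty$ with $C$ fixed; hence $\mu(W(T,\{E_n\})) = 1$.

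I expect the divergence half to be the only substantive step, and within it the crucial structural observation is that the mixing bound in Definition \ref{mixing} carries the factor $\mu(G) = \mu(E_n)$ rather than merely $\phi(n-m)$: this keeps the cross-term contribution linear in $S_N$ (namely $O(CS_N)$) rather than allowing a quadratic error, which is precisely what forces the Chung--Erd\H{o}s ratio to converge to $1$. No other obstacle is anticipated, since the membership $\{E_n\} \subset \mathcal{C}$ needed to invoke the mixing inequality pairwise is part of the hypothesis.
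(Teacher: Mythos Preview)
The paper does not supply its own proof of this statement: it is quoted verbatim as \cite[Corollary 1]{16} and used as a black box. Your argument is correct and is exactly the standard route to such dynamical Borel--Cantelli zero-one laws (and is the approach taken in \cite{16} as well): Borel--Cantelli for the convergence half, and for the divergence half the Kochen--Stone/Chung--Erd\H{o}s inequality combined with the pairwise correlation bound $\mu(A_m\cap A_n)\le \mu(E_m)\mu(E_n)+\mu(E_n)\phi(n-m)$ coming from \eqref{Eq.1.2}. Your observation that the factor $\mu(G)$ in \eqref{Eq.1.2} is what keeps the error linear in $S_N$ and hence drives the ratio to $1$ is precisely the point.
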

	Let $X=[0,1]^d$, and let $\left\{E_n\right\}$ be a sequence of hyperrectangles with side lengths $\mathbf{r}_n=\left(r_{n, 1}, \ldots, r_{n, d}\right) $, each centered at $\mathbf{y} \in [0,1]^d$. If $\mathbf{y} \in \Omega_1^C$, the density $h(\mathbf{y})$ of $\mu$ is unbounded. From this, we can deduce that if $\sum_{n=1}^{\infty} r_{n, 1} \cdots r_{n, d}=\infty$, then  $\sum_{n=1}^{\infty} \mu\left(E_n\right)=\infty$. By Theorem \ref{01y}, we conclude that
	$$
	\mu\left(W\left(T,\left\{E_n\right\}\right)\right)=1 \text{.}
	$$ 
	This result is consistent with our result in the divergent part of Theorem \ref{12}, but the convergence part differs. Differently, our results are dedicated to the $\mu$-measure of $W\left(T,\left\{E_n\right\}\right)$ is zero or one according to the convergence or divergence of the volume sum $\sum_{n \geq 1} m_d\left(E_n\right)$.
	
	The class of piecewise Lipschitz vector functions $f$ satisfying $m_d \circ f^{-1} \ll m_d$ encompasses, though is not limited to, the following examples:
	
	$\bullet$  $f(\mathbf{x})=\mathbf{x}$, and $f(\mathbf{x}) \equiv \mathbf{y}$, where $\mathbf{y} \in \Omega_1$ defined in Remark \ref{CDXZ}.
	
	$\bullet$  Bijective and  bi-Lipschitz vector functions.  For example,
	$$
	f(\mathbf{x})=\left(-x_1+1, \frac{1}{e-1}\left(e^{x_2}-1\right), \frac{1}{\ln 2} \ln \left(x_3+1\right), kx_4, \cdots, \tan \left(\frac{\pi}{4} x_{d-1}\right), \frac{4}{\pi} \arctan x_d\right),
	$$
	where $k \leq 1$. When $d=2$, $f(\mathbf{x})=\mathbf{x}\left(\begin{array}{ll}\frac{1}{2} & \frac{1}{2} \\ 0 & 1\end{array}\right)$ and $f(\mathbf{x})=\mathbf{x}\left(\begin{array}{ll}0 & 1 \\ 1 & 0\end{array}\right)$.
	
	Based on the above discussions, in some cases, twisted recurrence can manifest as a combination of different types such as the shrinking target problem, recurrence and cross-component recurrence in distinct directions on $[0,1]^d$. 
	
	$\bullet$ Piecewise bijective and  piecewise bi-Lipschitz functions, such as $f(\mathbf{x})= \mathbf{x}\mathbf{A}+\mathbf{b} \bmod 1$, where $\mathbf{A}$ is a $d \times d$ non-singular real matrix and $\mathbf{b}$ is a constant in $[0,1)^d$.
	
	It is worth noting that $f(x)=x^{1 / q}$, where $q>1$, satisfies $m_1 \circ f^{-1} \ll m_1$, but it is not the piecewise Lipschitz function as defined in Definition \ref{pf}.

	For general probability measures $\mu$, due to variety of their density functions $h$, $\mu \circ f^{-1} \ll \mu$ is currently only satisfied by the functions $f(\mathbf{x})=\mathbf{x}$ and $f(\mathbf{x})\equiv\mathbf{y}$, where $\mathbf{y} \in V$ as defined in Theorem \ref{13}. 
	
	Recall that, under the Condition \ref{122} of Theorems \ref{12} and  \ref{13}, the conditions $m_d \circ f^{-1} \ll m_d$ and $\mu \circ f^{-1} \ll \mu$ on $f$ can be removed.
	
	\section{Proof of Theorem \ref{12}} \label{DDDR}
	
	In this section, we fix the collection $\mathcal{C}_1$ of subsets $E \subset[0,1]^d$ satisfying the following bounded property
	$$
	\textbf { (P1) : } \sup _{E \in \mathcal{C}_1} M^{*(d-1)}(\partial E)<4 d c_M+\frac{Kc_M}{1-L^{-(d-1)}} \text{,}
	$$
	where $L$ and $K$ are constants as defined in Definition \ref{pem}, and $c_M$ is a constant defined before Definition \ref{pem}. For any hyperrectangle $R \subset$ $[0,1]^d$, its boundary consists of $2 d$ hyperrectangles whose $(d-1)$-dimensional Lebesgue measure is less than 1. Thus, $M^{*(d-1)}(\partial R) \leq 2 dc_M$, and consequently, all hyperrectangles $R \subset$ $[0,1]^d$ satisfies the bounded property \textbf { (P1)}.
	
	\subsection{The convergence part.} \label{RC} The convergence part can be established by proving the following proposition.
	
	\begin{proposition} \label{PRC}
		If $\sum_{n=1}^{\infty} r_{n, 1} \cdots r_{n, d}<\infty$, then $\mu\left(\mathcal{R}^f\left(\left\{\mathbf{r}_n\right\}\right)\right)=0$.
	\end{proposition}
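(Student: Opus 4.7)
The plan is to verify the convergence half of the Borel--Cantelli lemma for the level sets
\begin{equation*}
A_n := \bigl\{\mathbf{x}\in[0,1]^d:\, T^n\mathbf{x}\in R(f(\mathbf{x}),\mathbf{r}_n)\bigr\},
\end{equation*}
after a preliminary reduction that uses the regularity of the density $h$ together with the compatibility hypothesis $m_d\circ f^{-1}\ll m_d$. By Theorem \ref{ZG} (Zygmund), the set $\Omega_1=\bigcup_{k,l}Z(k,l)$ has full Lebesgue measure, and Lemma \ref{DJ} applied to $m_d$ shows that $\Omega_2=f^{-1}(\Omega_1)=\bigcup_{k,l}M(k,l)$ also has full Lebesgue, hence full $\mu$, measure. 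It therefore suffices to prove $\mu(\mathcal R^f(\{\mathbf r_n\})\cap M(k,l))=0$ for every fixed pair $(k,l)$; the virtue of working inside $M(k,l)$ is that it supplies the pointwise bound $\mu(R(f(\mathbf x),\mathbf r_n))\le k\prod_i r_{n,i}$ for all $\mathbf x\in M(k,l)$ and $n\ge l$.

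For the main estimate, fix $(k,l)$ and, for each $n\ge l$, partition $[0,1]^d$ into congruent coordinate cubes $\{\Delta\}$ of side length $\eta_n$ chosen so that $p\eta_n\le|\mathbf r_n|_{\min}$, where $p$ is the Lipschitz constant of $f$. Bounded aspect ratio $|\mathbf r_n|/|\mathbf r_n|_{\min}\le B$ forces $|\mathbf r_n|_{\min}^d\ge B^{-d}\prod_i r_{n,i}$, so the cube count satisfies $\#\{\Delta\}\le\eta_n^{-d}\lesssim 1/(r_{n,1}\cdots r_{n,d})$. Discard every cube disjoint from $M(k,l)$, and for each surviving $\Delta$ pick $\mathbf y_\Delta\in\Delta\cap M(k,l)$. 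The Lipschitz bound $|f(\mathbf x)-f(\mathbf y_\Delta)|\le p\eta_n\le|\mathbf r_n|_{\min}$ yields
\begin{equation*}
R(f(\mathbf x),\mathbf r_n)\subset R(f(\mathbf y_\Delta),\mathbf r_n'),\qquad r_{n,i}':=r_{n,i}+p\eta_n\le 2r_{n,i},
\end{equation*}
so
\begin{equation*}
A_n\cap M(k,l)\subset\bigcup_\Delta\bigl(\Delta\cap T^{-n}R(f(\mathbf y_\Delta),\mathbf r_n')\bigr).
\end{equation*}

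Both $\Delta$ and $R(f(\mathbf y_\Delta),\mathbf r_n')$ are coordinate hyperrectangles and hence belong to $\mathcal C_1$ by property $\pro$; the exponential $\phi(n)$-mixing supplied by Definition \ref{mixing} then gives
\begin{equation*}
\mu\bigl(\Delta\cap T^{-n}R(f(\mathbf y_\Delta),\mathbf r_n')\bigr)\le\mu\bigl(R(f(\mathbf y_\Delta),\mathbf r_n')\bigr)\bigl(\mu(\Delta)+\phi(n)\bigr).
\end{equation*}
The Zygmund control at the point $f(\mathbf y_\Delta)\in\Omega_1$, combined with bounded aspect ratio, yields $\mu(R(f(\mathbf y_\Delta),\mathbf r_n'))\le C_k\prod_i r_{n,i}$. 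Summing over $\Delta$, using $\sum_\Delta\mu(\Delta)\le 1$ and the cube-count bound,
\begin{equation*}
\mu(A_n\cap M(k,l))\le C_k\prod_{i=1}^d r_{n,i}+C_k'\phi(n).
\end{equation*}
Since $\phi(n)=ce^{-\tau n}$ is summable and $\sum_n\prod_i r_{n,i}<\infty$ by hypothesis, Borel--Cantelli gives $\mu(\mathcal R^f(\{\mathbf r_n\})\cap M(k,l))=0$, and a countable union over $(k,l)$ finishes the proof.

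The main technical nuisance is that $M(k,l)$, as defined in the excerpt, only bounds $\mu$ of rectangles with side lengths \emph{exactly} $\mathbf r_n$, whereas the argument needs the analogous bound for the enlarged $\mathbf r_n'$; I would resolve this by invoking the full strength of Zygmund's theorem, which gives $\limsup_{|\mathbf s|\to 0}\mu(R(\mathbf x,\mathbf s))/(s_1\cdots s_d)<\infty$ for $m_d$-a.e.\ $\mathbf x$ along \emph{every} vanishing sequence $\mathbf s$, and by replacing $Z(k,l)$ by the slightly larger $\{\mathbf x:\mu(R(\mathbf x,\mathbf s))\le k(s_1\cdots s_d)\text{ for all }|\mathbf s|\le 1/l\}$, which still has full $m_d$-measure. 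A secondary subtlety is scale-matching: $\eta_n$ must be small enough that the Lipschitz distortion is absorbed into a constant dilation of $\mathbf r_n$, but large enough that $\eta_n^{-d}\phi(n)$ remains summable, and bounded aspect ratio is exactly what makes those two demands simultaneously satisfiable. Finally, under the strengthened Condition \ref{122} the unconditional bound $\mu(R)\le\mathfrak c\, m_d(R)$ renders both the restriction to $M(k,l)$ and the hypothesis $m_d\circ f^{-1}\ll m_d$ superfluous, which is how the latter can be dropped in that setting.
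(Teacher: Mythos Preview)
Your proof is correct and follows essentially the same strategy as the paper's: reduce via Zygmund's theorem to a set where $\mu\bigl(R(f(\mathbf x),\cdot)\bigr)$ is uniformly controlled, cover by small pieces, use the Lipschitz property (Lemma~\ref{JBXZ1}) to replace the moving target $R(f(\mathbf x),\mathbf r_n)$ by a fixed one on each piece, apply $\phi$-mixing, and invoke Borel--Cantelli. The only implementation difference is that the paper covers $Z(k,l_1)\cap M(k,l_2)$ by rectangles $R(\mathbf x_i,\mathbf r_n)$ with disjoint half-scalings (so it needs $\mathbf x_i\in Z(k,l_1)$ to bound $\mu(R(\mathbf x_i,\mathbf r_n))$ individually), whereas your cube partition gives $\sum_\Delta\mu(\Delta)\le 1$ automatically and hence needs only $M(k,l)$; your explicit discussion of the ``enlarged rectangle'' nuisance and its fix via the stronger form of $Z(k,l)$ is also more careful than the paper, which applies the bound $\mu\bigl(B(f(\mathbf x_i),(p+1)|\mathbf r_n|)\bigr)\le k(p+1)^d|\mathbf r_n|^d$ without comment.
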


	For any piecewise Lipschitz vector function $f:[0,1]^d \rightarrow[0,1]^d$, define 
	$$
	E_n:=\left\{\mathbf{x} \in[0,1]^d: T^n \mathbf{x} \in R\left(f(\mathbf{x}), \mathbf{r}_n\right)\right\}\text{.}
	$$
	Clearly, $\mathcal{R}^f\left(\left\{\mathbf{r}_n\right\}\right)=\lim \sup E_n$.
	
	Unlike in the shrinking target case, the sets $E_n$ cannot be expressed in the form $T^{-n} R_n$ for some hyperrectangles $R_n$. To handle this, we follow the approach from \cite[Lemma 3.1]{17} that when considering $E_n$ locally, we study the intersection of $E_n$ with $f^{-1} R\left(f(\mathbf{x}_0),p\mathbf{r}\right)$, where $\mathbf{x}_0 \in [0,1]^d$, $p$ is the Lipschitz constant defined in Definition \ref{pf}, and $\mathbf{r}=\left(r_1, \ldots, r_d\right) \in\left(\mathbb{R}_{\geq 0}\right)^d$. This intersection can be approximated by the preimages of some hyperrectangles under $T$.

	\begin{lemma} \label{JBXZ1}
		Let $R(f(\mathbf{x}_0), p \mathbf{r})$ be a hyperrectangle centered at $f(\mathbf{x}_0) \in[0,1]^d$, with side lengths $p\mathbf{r}=\left(pr_1, \ldots, pr_d\right) \in\left(\mathbb{R}_{\geq 0}\right)^d$. For any subset $A$ of $f^{-1} R\left(f(\mathbf{x}_0), p\mathbf{r}\right)$, 
		$$
		A \cap E_n \subset A \cap T^{-n} R\left(f(\mathbf{x}_0), \mathbf{r}_n+p \mathbf{r}\right)\text{.}
		$$
		Furthermore, if $pr_i \leq r_{n, i}$, for all $1 \leq i \leq d$, then
		$$
		A \cap T^{-n} R\left(f(\mathbf{x}_0), \mathbf{r}_n-p \mathbf{r}\right) \subset A \cap E_n \text{.}
		$$
	\end{lemma}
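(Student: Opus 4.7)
The plan is to prove both inclusions by a coordinate-wise application of the triangle inequality. Throughout, I will use that the hyperrectangle $R(\mathbf{y},\mathbf{s})=\prod_{i=1}^{d}B(y_i,s_i)$ so that $\mathbf{w}\in R(\mathbf{y},\mathbf{s})$ is equivalent to $|w_i-y_i|\le s_i$ for every $1\le i\le d$, and I will denote the $i$-th coordinate of $T^n\mathbf{x}$ by $(T^n\mathbf{x})_i$. The Lipschitz constant $p$ from Definition \ref{pf} does not appear explicitly in the argument itself: the hypothesis $A\subset f^{-1}R(f(\mathbf{x}_0),p\mathbf{r})$ already packages the required coordinate-wise information, namely that $|f_i(\mathbf{x})-f_i(\mathbf{x}_0)|\le pr_i$ for every $\mathbf{x}\in A$ and every $i$.

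For the first inclusion, fix $\mathbf{x}\in A\cap E_n$. Membership in $A$ yields $|f_i(\mathbf{x})-f_i(\mathbf{x}_0)|\le pr_i$, while membership in $E_n$ yields $|(T^n\mathbf{x})_i-f_i(\mathbf{x})|\le r_{n,i}$, both for every $i$. Adding these via the triangle inequality gives
\[
|(T^n\mathbf{x})_i-f_i(\mathbf{x}_0)|\le r_{n,i}+pr_i,\qquad 1\le i\le d,
\]
so that $T^n\mathbf{x}\in R(f(\mathbf{x}_0),\mathbf{r}_n+p\mathbf{r})$ and hence $\mathbf{x}\in A\cap T^{-n}R(f(\mathbf{x}_0),\mathbf{r}_n+p\mathbf{r})$.

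For the reverse inclusion, suppose $pr_i\le r_{n,i}$ for every $i$, so the hyperrectangle $R(f(\mathbf{x}_0),\mathbf{r}_n-p\mathbf{r})$ has non-negative side lengths and is well-defined. Take $\mathbf{x}\in A\cap T^{-n}R(f(\mathbf{x}_0),\mathbf{r}_n-p\mathbf{r})$. Again $\mathbf{x}\in A$ supplies $|f_i(\mathbf{x}_0)-f_i(\mathbf{x})|\le pr_i$, and the preimage condition supplies $|(T^n\mathbf{x})_i-f_i(\mathbf{x}_0)|\le r_{n,i}-pr_i$. The triangle inequality then gives
\[
|(T^n\mathbf{x})_i-f_i(\mathbf{x})|\le (r_{n,i}-pr_i)+pr_i=r_{n,i},
\]
for every $i$, so $T^n\mathbf{x}\in R(f(\mathbf{x}),\mathbf{r}_n)$ and therefore $\mathbf{x}\in E_n$.

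There is no genuine obstacle in this lemma; it is a purely geometric step that replaces the moving target $R(f(\mathbf{x}),\mathbf{r}_n)$ by a fixed target $R(f(\mathbf{x}_0),\mathbf{r}_n\pm p\mathbf{r})$ whose preimage under $T^n$ is a standard object to which the mixing assumptions from Definition \ref{mixing} will later be applied. The only point requiring care is the non-negativity condition $pr_i\le r_{n,i}$ in the second inclusion, which is why that direction is stated conditionally; this is precisely the regime one will exploit in the Borel--Cantelli estimates of Section \ref{RD}, once $\mathbf{r}$ is chosen small enough relative to $\mathbf{r}_n$.
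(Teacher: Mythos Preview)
Your proof is correct and follows essentially the same approach as the paper: a coordinate-wise application of the triangle inequality, using that $\mathbf{x}\in A$ gives $|f_i(\mathbf{x})-f_i(\mathbf{x}_0)|\le pr_i$ and that membership in $E_n$ or $T^{-n}R(f(\mathbf{x}_0),\mathbf{r}_n-p\mathbf{r})$ gives the complementary bound on $|(T^n\mathbf{x})_i - f_i(\cdot)|$. In fact you spell out the second inclusion explicitly, whereas the paper simply says it follows by similar methods.
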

	
	\begin{proof}
		Fix a point $\mathbf{z} \in A \cap E_n$, then $f(\mathbf{z}) \in R(f(\mathbf{x}_0), p \mathbf{r})$ and $T^n \mathbf{z} \in R\left(f(\mathbf{z}), \mathbf{r}_n\right)$.
		Write $T^n \mathbf{z}=\left(\tilde{z}_1, \ldots, \tilde{z}_d\right)$. For any $1 \leq i \leq d$, 
		$$
		\left|f_i(\mathbf{z})-f_i\left(\mathbf{x}_0\right)\right| \leq p r_i \quad \text { and } \quad\left|\tilde{z}_i-f_i(\mathbf{z})\right|\leq r_{n, i} .
		$$
		By the triangle inequality, 
		$$
		\left|\tilde{z}_i-f_i\left(\mathbf{x}_0\right)\right|\leq \left|\tilde{z}_i-f_i(\mathbf{z})\right|+\left|f_i(\mathbf{z})-f_i\left(\mathbf{x}_0\right)\right| \leq r_{n, i}+p r_i, \quad \text { for } 1 \leq i \leq d .
		$$
		This implies $T^n \mathbf{z} \in R\left(f\left(\mathbf{x}_0\right), \mathbf{r}_n+p \mathbf{r}\right)$. Hence,
		$$
		A \cap E_n \subset A \cap T^{-n} R\left(f\left(\mathbf{x}_0\right), \mathbf{r}_n+p \mathbf{r}\right).
		$$
		For the second inclusion, assume that $p r_i \leq r_{n, i}$ for all $1 \leq i \leq d$, and use similar methods, we have $A \cap T^{-n} R\left(f\left(\mathbf{x}_0\right), \mathbf{r}_n-p \mathbf{r}\right) \subset A \cap E_n$.
		This completes the proof of both inclusions.
	\end{proof}

	Recall that Theorem \ref{ZG} states that if $h$ belongs to $L^q\left(m_d\right)$ for some $q>1$, then for $m_d-a.e.$ $\mathbf{x}$, the $\mu$-measure of a sufficiently small hyperrectangle $R$ centered at $\mathbf{x}$ with sides parallel to the axes is approximately equal to the volume of $R$ multiplied by $h(\mathbf{x})$. Since $m_d \circ f^{-1} \ll m_d$, for $m_d-a.e.$ $\mathbf{x}$, the $\mu$-measure of a sufficiently small coordinate-parallel hyperrectangle $R$ with its center $f(\mathbf{x})$ or $\mathbf{x}$, is also approximately equal to the volume of $R$ multiplied by $h(f(\mathbf{x}))$ or $h(\mathbf{x})$, respectively. We now turn to the proof of Proposition \ref{PRC}.

	\begin{proof} [Proof of Proposition \ref{PRC}]
		By Remark \ref{CDXZ}, we fix one $Z(k, l_1) \cap M(k, l_2)$ with $\mu(Z(k, l_1) \cap M(k, l_2))>0$. Let $n \in \mathbb{N}$ with $n \geq \max (l_1, l_2)$. To obtain the result for the convergence part of hyperrectangles, we need to cover $Z(k, l_1) \cap M(k, l_2)$ with  hyperrectangles. 	We aim to construct a finite family of hyperrectangles $$\left\{R\left(\mathbf{x}_i, \mathbf{r}_n\right): \mathbf{x}_i \in Z(k, l_1) \cap M(k, l_2)\right\}_{i \in \mathcal{I}},$$ covering $Z(k, l_1) \cap M(k, l_2)$, and the `$1 / 2$-scaled up' hyperrectangles $\left\{R\left(\mathbf{x}_i, \mathbf{r}_n / 2\right)\right\}_{i \in \mathcal{I}}$ are pairwise disjoint. 	We start by choosing $\mathbf{x}_1 \in Z(k, l_1) \cap M(k, l_2)$, and let $R\left(\mathbf{x}_1, \mathbf{r}_n\right)$ be a hyperrectangle centered at $\mathbf{x}_1$. Inductively, suppose that $R\left(\mathbf{x}_1, \mathbf{r}_n\right), \ldots, R\left(\mathbf{x}_j, \mathbf{r}_n\right)$ have been defined for some $j \geq 1$. If the union $\cup_{i \leq j} R\left(\mathbf{x}_i, \mathbf{r}_n\right)$ does not cover $Z(k, l_1) \cap M(k, l_2)$, we choose a point $\mathbf{x}_{j+1} \in Z(k, l_1) \cap M(k, l_2) \backslash \cup_{i \leq j} R\left(\mathbf{x}_i, \mathbf{r}_n\right)$. Otherwise, we set $\mathcal{I}=\{1, \ldots, j\}$ and terminate the inductive construction. Next, we need to show that these hyperrectangles $\left\{R\left(\mathbf{x}_i, \mathbf{r}_n / 2\right)\right\}_{i \in \mathcal{I}}$ are pairwise disjoint. For any $i$ and $j$ with $i<j$, by construction, we have $\mathbf{x}_j \notin R\left(\mathbf{x}_i, \mathbf{r}_n\right)$. Therefore, $R\left(\mathbf{x}_i, \mathbf{r}_n / 2\right) \cap R\left(\mathbf{x}_j, \mathbf{r}_n / 2\right)=\emptyset$, proving the disjointness.
		The disjointness of these scaled hyperrectangles implies that 
		\begin{equation}
			\sharp \mathcal{I} \leq\left(r_{n, 1} \cdots r_{n, d}\right)^{-1} \leq \left|\mathbf{r}_n\right|_{\text {min }}^{-d}.
			\label{Eq.3.1}
		\end{equation}
		Take $\mathbf{z} \in R\left(\mathbf{x}_i, \mathbf{r}_n\right)$, which implies that $\left|\mathbf{z}-\mathbf{x}_i\right| \leq \left|\mathbf{r}_n\right|$.  Since $f$ is  piecewise Lipschitz vector function, we obtain $\left|f(\mathbf{z})-f\left(\mathbf{x_i}\right)\right| \leq p\left|\mathbf{z}-\mathbf{x}_{\mathbf{i}}\right| \leq p\left|\mathbf{r}_n\right|$, that is $f(\mathbf{z}) \in B\left(f\left(\mathbf{x}_i\right), p \left|\mathbf{r}_n\right|\right)$. Here $B\left(f\left(\mathbf{x}_i\right), p \left|\mathbf{r}_n\right|\right)$ is a hypercube (ball), centered at $f\left(\mathbf{x}_i\right)$ with radius $p \left|\mathbf{r}_n\right|$. Thus, we have the inclusion
		$$
		R\left(\mathbf{x}_i, \mathbf{r}_n\right) \subset f^{-1} B\left(f\left(\mathbf{x}_i\right), p \left|\mathbf{r}_n\right|\right).
		$$ 
		By Lemma \ref{JBXZ1}, we obtain the inclusion 
		$$
		Z(k, l_1) \cap M(k, l_2) \cap E_n \subset \bigcup_{i \in \mathcal{I}} R\left(\mathbf{x}_i, \mathbf{r}_n\right) \cap E_n \subset \bigcup_{i \in \mathcal{I}} R\left(\mathbf{x}_i, \mathbf{r}_n\right) \cap T^{-n} B\left(f\left(\mathbf{x}_i\right),(p+1) \left|\mathbf{r}_n\right|\right).
		$$
		By the summable-mixing property of $\mu$, and using \eqref{Eq.1.2}, \eqref{Eq.3.1} as well as the definition of $Z(k, l_1) \cap M(k, l_2)$, we have
		$$
		\begin{aligned}
			&\mu\left(Z(k, l_1) \cap M(k, l_2) \cap E_n\right)  \leq \sum_{i \in \mathcal{I}} \mu\left(R\left(\mathbf{x}_i, \mathbf{r}_n\right) \cap T^{-n} B\left(f\left(\mathbf{x}_i\right),(p+1) \left|\mathbf{r}_n\right|\right)\right) \\
			& \leq \sum_{i \in \mathcal{I}}\left(\mu\left(R\left(\mathbf{x}_i, \mathbf{r}_n\right)\right)+\phi(n)\right) \mu\left(B\left(f\left(\mathbf{x}_i\right),(p+1) \left|\mathbf{r}_n\right|\right)\right) \\
			& \leq\left|\mathbf{r}_n\right|_{\text {min }}^{-d} \cdot \left(k r_{n, 1} \cdots r_{n, d}+\phi(n)\right) \cdot k (p+1)^d \left|\mathbf{r}_n\right|^d \\
			& =k(p+1)^d \left(\frac{\left|\mathbf{r}_n\right|}{\left|\mathbf{r}_n\right|_{\text {min }}}\right)^d \left(k r_{n, 1} \cdots r_{n, d}+\phi(n)\right).
		\end{aligned}
		$$
		Since $\sum_{n=1}^{\infty} r_{n, 1} \cdots r_{n, d}<\infty$ and the sequence $\left\{\frac{\left|\mathbf{r}_n\right|}{\left|\mathbf{r}_n\right|_{\text {min }}}\right\}$ is bounded, we conclude that
		$$
		\sum_{n=1}^{\infty} \mu\left(\left(Z(k, l_1) \cap M(k, l_2) \cap E_n\right)<\infty\right. \text{.}
		$$
		By Borel-Cantelli lemma, $\mu\left(\left(Z(k, l_1) \cap M(k, l_2) \cap \mathcal{R}^f\left(\left\{\mathbf{r}_n\right\}\right.\right)=0\right.$. Finally, it follows from Remark \ref{CDXZ} that
		$$
		\mu\left(\mathcal{R}^f\left(\left\{\mathbf{r}_n\right\}\right)\right)=0.
		$$
		
		Under the Condition \ref{122} of Theorem \ref{12}, it suffices to replace $Z(k, l_1) \cap M(k, l_2)$ with $[0,1]^d$. Moreover, it is unnecessary to assume that $f$ satisfies $m_d \circ f^{-1} \ll m_d$, and the convergence part of Theorem \ref{12} also holds.
		
		Furthermore, if $f(\mathbf{x})=\left(f_{1^{'}}\left(x_1\right), f_{2^{'}}\left(x_2\right), \cdots, f_{d^{'}}\left(x_d\right)\right)$, we will list the differences in the proof so that the convergence part holds without the condition that the sequence $\left\{\frac{\left|\mathbf{r}_n\right|}{\left|\mathbf{r}_n\right|_{\text {min }}}\right\}$ is bounded.
		Take $\mathbf{z} \in R\left(\mathbf{x}_i, \mathbf{r}_n\right)$, which implies that $\left|z_j-x_{i, j}\right| \leq r_{n, j}$ for all $1 \leq j \leq d$. Since $f$ is piecewise Lipschitz vector function such that $f_{j^{'}}: [0,1] \rightarrow [0,1]$ is piecewise Lipschitz function, we have $\left|f_{j^{'}}\left(z_j\right)-f_{j^{'}}\left(x_{i, j}\right)\right| \leq p\left|z_j-x_{i, j}\right| \leq p r_{n, j}$, for each $j=j^{'}$. Thus, we have the inclusion
		$$
		R\left(\mathbf{x}_i, \mathbf{r}_n\right) \subset f^{-1} R\left(f\left(\mathbf{x}_i\right), p \mathbf{r}_n\right).
		$$
		By Lemma \ref{JBXZ1}, we obtain the inclusion	
		$$
		Z\left(k, l_1\right) \cap M\left(k, l_2\right) \cap E_n \subset \bigcup_{i \in \mathcal{I}} R\left(\mathbf{x}_i, \mathbf{r}_n\right) \cap E_n \subset \bigcup_{i \in \mathcal{I}} R\left(\mathbf{x}_i, \mathbf{r}_n\right) \cap T^{-n} R\left(f\left(\mathbf{x}_i\right),(p+1) \mathbf{r}_n\right).
		$$
		Finally, based on $\sharp \mathcal{I} \leq\left(r_{n, 1} \cdots r_{n, d}\right)^{-1}$, $\mu\left(R\left(f\left(\mathbf{x}_i\right),(p+1) \mathbf{r}_n\right)\right) \leq k(p+1)^d r_{n, 1} \cdots r_{n, d}$ and the same reason, we complete the convergence part.
	\end{proof}

	\subsection{The divergence part.}  \label{RD}
	In this part, we assume that $ \sum_{n\ge 1} r_{n,1}\cdots r_{n,d}=\infty $. We note that
	$$
	\sum_{n: \forall i, r_{n, i}>n^{-2}} r_{n, 1} \cdots r_{n, d}=\infty \Longleftrightarrow \sum_{n=1}^{\infty} r_{n, 1} \cdots r_{n, d}=\infty
	$$
	and 
	\begin{equation}
		\limsup _{n: \forall i, r_{n, i}>n^{-2}} E_n \subset \limsup _{n \rightarrow \infty} E_n.
		\label{Eq.3.2}
	\end{equation}
	To prove the divergence part, we aim to show that the left side of \eqref{Eq.3.2} is a full $\mu$-measure set. Without loss of generality, we further assume that for all $ n\in\N $, either $ r_{n,i} $ is greater than $ n^{-2} $ for all $ 1\le i\le d $, or equal to $ 0 $ for all $ 1\le i\le d $.
	
	Next, following the approach outlined in \cite{7}, we construct a sequence of auxiliary sets $\hat{E}_n$.

	\begin{lemma}
		For any $\mathbf{x} \in[0,1]^d$ and $n \in \mathbb{N}$, and for a vector function $f:[0,1]^d \rightarrow[0,1]^d$, $\exists l_n(\mathbf{x}) \in \mathbb{R}_{\geq 0}$ such that
		$$
		\mu\left(R\left(f(\mathbf{x}), l_n(\mathbf{x}) \mathbf{r}_n\right)\right)=r_{n, 1} \cdots r_{n, d}.
		$$
	\end{lemma}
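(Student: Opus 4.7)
The plan is to apply an intermediate value theorem argument to the scalar function
\[
g_{\mathbf{x},n}(l) \;:=\; \mu\bigl(R(f(\mathbf{x}),\,l\,\mathbf{r}_n)\bigr),\qquad l\in[0,\infty),
\]
and read off $l_n(\mathbf{x})$ as a preimage of the target value $r_{n,1}\cdots r_{n,d}$. For this I would verify three properties of $g_{\mathbf{x},n}$: monotonicity, continuity, and the correct limiting values at $l=0$ and $l=\infty$.

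Monotonicity is immediate, since $l\le l'$ implies $R(f(\mathbf{x}),l\mathbf{r}_n)\subseteq R(f(\mathbf{x}),l'\mathbf{r}_n)$, so $g_{\mathbf{x},n}(l)\le g_{\mathbf{x},n}(l')$. For continuity I would use absolute continuity of $\mu$ with respect to $m_d$ (the density $h$ exists under either Condition~\ref{121} or~\ref{122} of Theorem~\ref{12}). For $0\le l<l'$, the symmetric difference $R(f(\mathbf{x}),l'\mathbf{r}_n)\setminus R(f(\mathbf{x}),l\mathbf{r}_n)$ is contained in a union of $2d$ coordinate-parallel slabs of thickness $(l'-l)r_{n,i}$ inside $R(f(\mathbf{x}),l'\mathbf{r}_n)$; an elementary estimate gives an $m_d$-bound of the form $C_d(l'^{\,d}-l^{\,d})\,r_{n,1}\cdots r_{n,d}$, which tends to $0$ as $l'\to l$. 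Absolute continuity then yields $|g_{\mathbf{x},n}(l')-g_{\mathbf{x},n}(l)|\to 0$, i.e.\ continuity in $l$.

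For the endpoint values, $g_{\mathbf{x},n}(0)=\mu(\{f(\mathbf{x})\})=0$ (again by $\mu\ll m_d$), while for $l$ large enough that $l\,r_{n,i}\ge 1$ for every $i$ with $r_{n,i}>0$, the hyperrectangle $R(f(\mathbf{x}),l\mathbf{r}_n)$ contains $[0,1]^d$, hence $g_{\mathbf{x},n}(l)=1$. Because $g_{\mathbf{x},n}$ is continuous and monotone with $g_{\mathbf{x},n}(0)=0$ and $\sup_l g_{\mathbf{x},n}(l)=1$, its image is the full interval $[0,1]$. Since the target value $r_{n,1}\cdots r_{n,d}$ lies in $[0,1]$ under the standing hypothesis $\lim_{n\to\infty}|\mathbf{r}_n|=0$ (so in particular each $r_{n,i}\le 1$ for $n$ large, and we may adjust the finitely many exceptional indices if needed), the intermediate value theorem supplies $l_n(\mathbf{x})\in[0,\infty)$ with $g_{\mathbf{x},n}(l_n(\mathbf{x}))=r_{n,1}\cdots r_{n,d}$.

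There is no substantive obstacle here; the only point that requires a brief argument, rather than a one-line observation, is the continuity of $g_{\mathbf{x},n}$, which I would handle through the slab decomposition of the annular difference combined with $\mu\ll m_d$. Note that this construction makes no use of the piecewise Lipschitz structure of $f$, of the expanding property of $T$, or of the mixing hypothesis, so the same proof works verbatim in the setting of Theorem~\ref{13} when one later needs an analogous auxiliary radius for the hyperboloid targets.
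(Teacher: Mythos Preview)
Your proposal is correct and follows essentially the same route as the paper: define $g(l)=\mu(R(f(\mathbf{x}),l\mathbf{r}_n))$, check $g(0)=0$ and $g\to 1$ as $l\to\infty$, establish continuity via the $m_d$-smallness of the annular difference, and apply the intermediate value theorem. The only minor variation is that the paper proves continuity quantitatively through H\"older's inequality (using $h\in L^q$ to get $|g(t)-g(t_0)|\le \|h\|_q\cdot m_d(\text{annulus})^{1-1/q}$), whereas you invoke the qualitative $\varepsilon$--$\delta$ form of absolute continuity of the finite measure $\mu$ with respect to $m_d$; both arguments are valid, and yours is slightly more general since it does not need the $L^q$ hypothesis on the density.
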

	
	\begin{proof}
		Fix $n \in \mathbb{N}$	and define a function $g: \mathbb{R}_{\geq 0} \rightarrow [0,1]$ by
		$
		g(t)=\mu\left(R\left(f(\mathbf{x}), t \mathbf{r}_n\right)\right)
		$. Clearly, $g(0)=0$ and $g(\infty)=1$.
		
		Next, we will prove that $g$ is a continuous function; that is, $\forall \varepsilon>0$, we need to find $\delta(\varepsilon)>0$, such that for $\left|t-t_0\right| < \delta(\varepsilon)$, $\left|g(t)-g\left(t_0\right)\right|<\varepsilon$. 
		
		By Hölder's inequality,  we estimate $\left|g(t)-g\left(t_0\right)\right|$ as follows.
		\begin{equation}
			\begin{aligned}
				\left|g(t)-g\left(t_0\right)\right| & =\left|\mu\left(R\left(f(\mathbf{x}), t \mathbf{r}_n\right)\right)-\mu\left(R\left(f(\mathbf{x}), t_0 \mathbf{r}_n\right)\right)\right| \\
				& \leq \int_{R\left(f(\mathbf{x}),\left(t_0+\delta(\varepsilon)\right) \mathbf{r}_n\right) \backslash R\left(f(\mathbf{x}), t_0 \mathbf{r}_n\right)} \chi(\mathbf{x}) d \mu(\mathbf{x}) \\
				& =\int \chi_{R\left(f(\mathbf{x}),\left(t_0+\delta(\varepsilon)\right) \mathbf{r}_n\right) \backslash R\left(f(\mathbf{x}), t_0 \mathbf{r}_n\right)} \cdot h(\mathbf{x}) \mathrm{~d} m_d(\mathbf{x}) \\
				& \leq \|h\|_q \cdot m_d\left(R\left(f(\mathbf{x}),\left(t_0+\delta(\varepsilon)\right) \mathbf{r}_n\right) \backslash R\left(f(\mathbf{x}), t_0 \mathbf{r}_n\right)\right)^{1-1 / q},
			\end{aligned}
			\label{Eq.3.3}
		\end{equation}
		where $\|h\|_q:=\left(\int|h|^q \mathrm{~d} m_d\right)^{1 / q}$ is the $L^q$-norm of $h$.
		
		Since $R\left(f(\mathbf{x}), (t_0+\delta(\varepsilon))\mathbf{r}_n\right) \backslash R\left(f(\mathbf{x}), t_0\mathbf{r}_n\right)$ is contained in $2 d$ hyperrectangles, each with a volume less than $\delta(\varepsilon) \cdot \left|\mathbf{r}_n\right|$, and by \eqref{Eq.3.3}, we have
		$$
		\left|g(t)-g\left(t_0\right)\right| \leq\
		\|h\|_q \cdot \left(\delta(\varepsilon)\left|\mathbf{r}_n\right|\right)^{1-1 / q} <\varepsilon, 
		$$
		where $\delta(\varepsilon)=(\varepsilon /\|h\|_q)^{q / (q-1)} \cdot 1/ \left|\mathbf{r}_n\right|$.
		Then, the remaining proof of the lemma is completed by the intermediate value theorem of continuous functions.
	\end{proof}
	
	Let $\xi_n(\mathbf{x}):=l_n(\mathbf{x}) \mathbf{r}_n \in\left(\mathbb{R}_{\geq 0}\right)^d$, define
	$$
	\hat{E}_n:=\left\{\mathbf{x} \in[0,1]^d: T^n \mathbf{x} \in R\left(f(\mathbf{x}), \xi_n(\mathbf{x})\right)\right\} \quad \text { and } \quad \hat{\mathcal{R}}^f\left(\left\{\mathbf{r}_n\right\}\right):=\lim \sup \hat{E}_n.
	$$
	
	\begin{lemma} \label{l34}
		If the set $\hat{\mathcal{R}}^f\left(\left\{\mathbf{r}_n\right\}\right)$ is of full $\mu$-measure, then the set $\mathcal{R}^f\left(\left\{\mathbf{r}_n\right\}\right)$ is of full $\mu$-measure.
	\end{lemma}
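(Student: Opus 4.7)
The plan is to exploit Zygmund's differentiation theorem (Theorem~\ref{ZG}) to compare the auxiliary events $\hat{E}_n$ with the original events $E_n$ pointwise, supplemented by a rescaling argument. First, apply Theorem~\ref{ZG} to $h \in L^q(m_d)$: for $m_d$-a.e.~$\mathbf{y}$,
\[
\frac{\mu(R(\mathbf{y}, \mathbf{r}_n))}{2^d r_{n,1}\cdots r_{n,d}} \longrightarrow h(\mathbf{y}).
\]
The hypothesis $m_d\circ f^{-1}\ll m_d$ (via Lemma~\ref{DJ} applied with $m_d$ playing the role of $\mu$) together with $\mu\ll m_d$ extends the convergence to $\mathbf{y}=f(\mathbf{x})$ for $\mu$-a.e.~$\mathbf{x}$. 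Substituting into the defining equation $\mu(R(f(\mathbf{x}),l_n(\mathbf{x})\mathbf{r}_n))=r_{n,1}\cdots r_{n,d}$ and appealing to the strict monotonicity of $t\mapsto\mu(R(f(\mathbf{x}),t\mathbf{r}_n))$ from the preceding lemma yields
\[
l_n(\mathbf{x})\longrightarrow\bigl(2^d h(f(\mathbf{x}))\bigr)^{-1/d} \quad \text{for $\mu$-a.e.\ }\mathbf{x}.
\]

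The main step is a rescaling. For each $\lambda\in(0,1)$, repeat the auxiliary construction with $\lambda\mathbf{r}_n$ in place of $\mathbf{r}_n$, producing a scaling factor $\tilde l_n(\mathbf{x})$ characterised by $\mu(R(f(\mathbf{x}),\tilde l_n(\mathbf{x})\lambda\mathbf{r}_n))=\lambda^d r_{n,1}\cdots r_{n,d}$. The same Zygmund computation shows $\tilde l_n(\mathbf{x})\lambda\to\lambda(2 h(f(\mathbf{x}))^{1/d})^{-1}$, which is strictly less than $1$ on $V_\lambda:=\{\mathbf{x}:h(f(\mathbf{x}))>(\lambda/2)^d\}$. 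Since $\sum_n(\lambda r_{n,1})\cdots(\lambda r_{n,d})=\lambda^d\sum r_{n,1}\cdots r_{n,d}=\infty$, the Borel--Cantelli-plus-mixing argument used in the rest of this section to establish $\mu(\hat{\mathcal{R}}^f(\{\mathbf{r}_n\}))=1$ applies verbatim to the rescaled auxiliary set, yielding $\mu(\hat{\mathcal{R}}^f(\{\lambda\mathbf{r}_n\}))=1$. On $V_\lambda$, for $n$ large the rescaled rectangle is contained in $R(f(\mathbf{x}),\mathbf{r}_n)$, so $\hat{\mathcal{R}}^f(\{\lambda\mathbf{r}_n\})\cap V_\lambda\subset\mathcal{R}^f(\{\mathbf{r}_n\})$. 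Taking a sequence $\lambda_k\downarrow0$, the sets $V_{\lambda_k}$ exhaust $\{\mathbf{x}:h(f(\mathbf{x}))>0\}$, so $\mu(\mathcal{R}^f(\{\mathbf{r}_n\}))\geq\mu(\{h\circ f>0\})$.

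The main obstacle is to confirm $\mu(\{h\circ f=0\})=0$ under the weaker Condition~(1) of Theorem~\ref{12}, which supplies only $m_d\circ f^{-1}\ll m_d$ rather than $\mu\circ f^{-1}\ll\mu$. I would resolve this by appealing to the structural description in Proposition~\ref{l:Li}: the support of $\mu$ decomposes into ergodic components each of which is $m_d$-equivalent to the restriction of $\mu$, so $m_d(\{h=0\}\cap\mathrm{supp}\,\mu)=0$; combined with $m_d\circ f^{-1}\ll m_d$ and $\mu\ll m_d$, this forces $\mu(f^{-1}(\{h=0\}))=0$.
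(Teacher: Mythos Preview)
Your rescaling strategy is the same idea as the paper's: both apply the full-measure hypothesis not to $\{\mathbf{r}_n\}$ itself but to a shrunken sequence, and then argue via Zygmund that the shrunken auxiliary rectangle eventually sits inside $R(f(\mathbf{x}),\mathbf{r}_n)$. The paper does this in one stroke by picking a sequence $a_n\to\infty$ with $\sum r_{n,1}\cdots r_{n,d}/a_n^d=\infty$, setting $\tilde{\mathbf{r}}_n=a_n^{-1}\mathbf{r}_n$, and comparing $\mu(R(f(\mathbf{x}),\tilde\xi_n(\mathbf{x})))=r_{n,1}\cdots r_{n,d}/a_n^d$ against $\mu(R(f(\mathbf{x}),\mathbf{r}_n))\sim 2^d h(f(\mathbf{x}))\,r_{n,1}\cdots r_{n,d}$. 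Your variant with a fixed scale $\lambda$ followed by an exhaustion over $\lambda_k\downarrow 0$ is equally valid on $\{h\circ f>0\}$; the paper's single divergent sequence just handles that whole set at once without the countable union.

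Where you part company is the case $\{h\circ f=0\}$, and here your proposed fix does not hold up. Proposition~\ref{l:Li} is stated only for real matrix transformations of tori, not for the general piecewise expanding maps of Theorem~\ref{12}, so you cannot invoke its structural conclusion that $\mu|_{A_i}\sim m_d|_{A_i}$ in the generality required. Even if one grants $m_d(\{h=0\}\cap\mathrm{supp}\,\mu)=0$, the chain you sketch still needs $\mu\bigl(f^{-1}([0,1]^d\setminus\mathrm{supp}\,\mu)\bigr)=0$, and the hypothesis $m_d\circ f^{-1}\ll m_d$ does not furnish that when $m_d([0,1]^d\setminus\mathrm{supp}\,\mu)>0$. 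That said, the paper's own handling of this case is a one-line assertion (an unexplained equality of measures when $h(f(\mathbf{x}))=0$) that is not obviously correct either, so you have located a genuine soft spot in the argument rather than introduced a new error; under Condition~(\ref{122}) the issue disappears since $h$ is bounded above and the relevant Zygmund limit holds everywhere.
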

	\begin{proof}
		We can find a sequence of positive numbers $\left\{a_n\right\}$ such that
		$$
		\sum_{n=1}^{\infty} \frac{r_{n, 1} \cdots r_{n, d}}{a_n^d}=\infty \quad \text { and } \quad \lim _{n \rightarrow \infty} a_n=\infty.
		$$
		We define the scaled sequence $\tilde{\mathbf{r}}_n=a_n^{-1} \mathbf{r}_n$. From the assumption that $\hat{\mathcal{R}}^f\left(\tilde{\mathbf{r}}_n\right)$  is of full $\mu$-measure, for $\mu-a.e.$ $\mathbf{x}$, we have
		$$
		T^n \mathbf{x} \in R\left(f(\mathbf{x}), \tilde{\xi}_n(\mathbf{x})\right) \text { for i.m. } n \in \mathbb{N},
		$$
		where $\tilde{\xi}_n(\mathbf{x}):=\tilde{l}_n(\mathbf{x}) \tilde{\mathbf{r}}_n$ and $\tilde{l}_n(\mathbf{x}) \in \mathbb{R}_{\geq 0}$ is the positive number such that
		$$
		\mu\left(R\left(f(\mathbf{x}), \tilde{l}_n(\mathbf{x}) \tilde{\mathbf{r}}_n\right)\right)=r_{n, 1} \cdots r_{n, d} / a_n^d.
		$$
		By Zygmund differentiation theorem (see Theorem \ref{ZG}) and $m_d \circ f^{-1} \ll m_d$, for $m_d$-almost every $\mathbf{x}$, we have 
		\begin{equation}
			0 \leq \lim _{n \rightarrow \infty} \frac{\mu\left(R\left(f(\mathbf{x}), \mathbf{r}_n\right)\right)}{2^d \cdot r_{n, 1} \cdots r_{n, d}}=h(f(\mathbf{x})).
			\label{Eq.3.3.1}
		\end{equation}
		Under the Condition \ref{122} of Theorem \ref{12}, for any  $\mathbf{x}$, \eqref{Eq.3.3.1} naturally holds.
		
		For any such $\mathbf{x}$ with $h(f(\mathbf{x}))>0$, since $\lim _{n \rightarrow \infty} a_n=\infty$, there exists an $N(\mathbf{x})$ such that for all $n \geq N(\mathbf{x})$,
		$$
		\mu\left(R\left(f(\mathbf{x}), \tilde{\xi}_n(\mathbf{x})\right)\right)=r_{n, 1} \cdots r_{n, d} / a_n^d< 2^{d-1} h(f(\mathbf{x})) \cdot r_{n, 1} \cdots r_{n, d}.
		$$
		Increasing the integer $N(\mathbf{x})$ if necessary, we have
		$$
		\mu\left(R\left(f(\mathbf{x}), \tilde{\xi}_n(\mathbf{x})\right)\right)<\mu\left(R\left(f(\mathbf{x}), \mathbf{r}_n\right)\right), \text { for all } n \geq N(\mathbf{x}) \text {. }
		$$
		If $h(f(\mathbf{x}))=0$, $\mu\left(R\left(f(\mathbf{x}), \tilde{\xi}_n(\mathbf{x})\right)\right)=\mu\left(R\left(f(\mathbf{x}), \mathbf{r}_n\right)\right).$
		Therefore,
		$$
		R\left(f(\mathbf{x}), \tilde{\xi}_n(\mathbf{x})\right) \subset R\left(f(\mathbf{x}), \mathbf{r}_n\right).
		$$
		Thus, combining the condition, we conclude that for $\mu-a.e.$ $\mathbf{x}$,
		$$
		T^n \mathbf{x} \in R\left(f(\mathbf{x}), \mathbf{r}_n\right) \text { for i.m. } n \in \mathbb{N} \text {, }
		$$
		which completes the lemma proof.
	\end{proof}

	The next lemma characterizes the local structures of $ \hat{E}_n $, which resemble those of $ E_n $.
	
	\begin{lemma} \label{l35}
		Let $R(f(\mathbf{x}_0), p \mathbf{r})$ be a hyperrectangle centered at $f(\mathbf{x}_0) \in[0,1]^d$ with $\mathbf{r}=\left(r_1, \ldots, r_d\right) \in\left(\mathbb{R}_{\geq 0}\right)^d$, and let $t=\max _{1 \leq i \leq d}\left(p r_i / r_{n, i}\right)$. For any subset $A$ of $f^{-1} R\left(f(\mathbf{x}_0),  p \mathbf{r} \right)$, 
		$$
		A \cap \hat{E}_n \subset A \cap T^{-n} R\left(f(\mathbf{x}_0), \xi_n(\mathbf{x}_0)+2 t \mathbf{r}_n\right)\text{.}
		$$
		Furthermore, if $2t \leq l_n(\mathbf{x}_0)$, then
		$$
		A \cap T^{-n} R\left(f(\mathbf{x}_0), \xi_n(\mathbf{x_0})-2 t \mathbf{r}_n\right) \subset A \cap \hat{E}_n \text{.}
		$$
		
	\end{lemma}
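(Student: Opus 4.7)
The plan is to mimic the proof of Lemma 3.2, but with the additional wrinkle that the "radius" $\xi_n(\mathbf{x}) = l_n(\mathbf{x})\mathbf{r}_n$ varies with the base point. Accordingly, before doing any triangle-inequality bookkeeping on $T^n\mathbf{z}$, I would first establish the key comparison
\begin{equation*}
\bigl|l_n(\mathbf{z}) - l_n(\mathbf{x}_0)\bigr| \le t \qquad \text{for every } \mathbf{z} \in f^{-1} R\!\left(f(\mathbf{x}_0), p\mathbf{r}\right).
\end{equation*}

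To prove this comparison, I would use the fact that $l_n(\cdot)$ is pinned down by $\mu\bigl(R(f(\cdot), l_n(\cdot)\mathbf{r}_n)\bigr) = r_{n,1}\cdots r_{n,d}$, together with an elementary geometric inclusion. Namely, for any two centers $\mathbf{a},\mathbf{b}\in[0,1]^d$ with $|f_i(\mathbf{a})-f_i(\mathbf{b})|\le t\, r_{n,i}$ for all $i$, the triangle inequality coordinate-by-coordinate gives $R(f(\mathbf{a}), L\mathbf{r}_n)\subseteq R(f(\mathbf{b}), (L+t)\mathbf{r}_n)$ for every $L\ge 0$. Applying this with $\mathbf{a}=\mathbf{x}_0$, $L=l_n(\mathbf{x}_0)$, and taking $\mu$-measure yields $\mu\bigl(R(f(\mathbf{z}), (l_n(\mathbf{x}_0)+t)\mathbf{r}_n)\bigr) \ge r_{n,1}\cdots r_{n,d}$, and the monotonicity of $g(s):=\mu(R(f(\mathbf{z}),s\mathbf{r}_n))$ (continuous, nondecreasing) then forces $l_n(\mathbf{z}) \le l_n(\mathbf{x}_0)+t$. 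Swapping the roles of $\mathbf{z}$ and $\mathbf{x}_0$ gives the reverse inequality.

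With this comparison in hand, both inclusions become routine. For the first, take $\mathbf{z}\in A\cap \hat{E}_n$. Writing $T^n\mathbf{z}=(\tilde z_1,\dots,\tilde z_d)$, the definition of $\hat{E}_n$ gives $|\tilde z_i - f_i(\mathbf{z})|\le l_n(\mathbf{z})\,r_{n,i}$, while $\mathbf{z}\in f^{-1}R(f(\mathbf{x}_0),p\mathbf{r})$ gives $|f_i(\mathbf{z})-f_i(\mathbf{x}_0)|\le p r_i \le t\, r_{n,i}$. Adding these and using $l_n(\mathbf{z})\le l_n(\mathbf{x}_0)+t$, I obtain $|\tilde z_i - f_i(\mathbf{x}_0)| \le (l_n(\mathbf{x}_0)+2t)\,r_{n,i}$, which is exactly $T^n\mathbf{z}\in R(f(\mathbf{x}_0),\xi_n(\mathbf{x}_0)+2t\mathbf{r}_n)$. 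For the reverse inclusion, assume $2t\le l_n(\mathbf{x}_0)$ and take $\mathbf{z}\in A\cap T^{-n}R(f(\mathbf{x}_0),\xi_n(\mathbf{x}_0)-2t\mathbf{r}_n)$; the same triangle inequality combined with $l_n(\mathbf{z})\ge l_n(\mathbf{x}_0)-t$ yields $|\tilde z_i - f_i(\mathbf{z})|\le (l_n(\mathbf{x}_0)-t)\,r_{n,i} \le l_n(\mathbf{z})\,r_{n,i}$, proving $\mathbf{z}\in\hat{E}_n$.

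The main obstacle is the comparison step for $l_n$: unlike in Lemma 3.2, where the radii $\mathbf{r}_n$ are literally constant across the cover, here $l_n(\mathbf{x})$ is defined only implicitly through $\mu$, and one must convert a geometric perturbation of the center into a quantitative perturbation of the implicit parameter. The trick above, which trades the geometric inclusion for a monotonicity argument on $g$, is what makes the proof go through; once it is in place, the rest of the argument parallels Lemma 3.2 exactly with $2t\mathbf{r}_n$ replacing $p\mathbf{r}$ in the triangle inequality.
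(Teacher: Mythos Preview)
Your proposal is correct and follows essentially the same approach as the paper's proof: both first establish the key comparison $|l_n(\mathbf{z})-l_n(\mathbf{x}_0)|\le t$ via the geometric inclusion $R(f(\mathbf{x}_0),\xi_n(\mathbf{x}_0))\subset R(f(\mathbf{z}),\xi_n(\mathbf{x}_0)+t\mathbf{r}_n)$ (and its reverse) combined with the defining property of $l_n$, and then run the same coordinate-wise triangle-inequality argument as in the earlier lemma for $E_n$. Your presentation isolates the comparison step a bit more explicitly, but the logic and the constants match the paper exactly.
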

	
	\begin{proof}
		Given that $p r_i=p r_i / r_{n, i} \cdot r_{n, i} \leq t r_{n, i}$, for any $\mathbf{z} \in A$, we have
		$$
		f(\mathbf{z}) \in  R(f(\mathbf{x}_0), p \mathbf{r}) \subset R\left(f(\mathbf{x}_0), t \mathbf{r}_n\right).
		$$
		Using the triangle inequality, we have 
		$$
		R\left(f(\mathbf{z}), \xi_n(\mathbf{x_0})-t \mathbf{r}_n\right) \subset R\left(f(\mathbf{x}_0), \xi_n(\mathbf{x}_0)\right) \subset R\left(f(\mathbf{z}), \xi_n(\mathbf{x}_0)+t \mathbf{r}_n\right).
		$$
		Hence,
		$$
		\begin{aligned}	
			\mu\left(R\left(f(\mathbf{z}), \xi_n(\mathbf{x}_0)-t \mathbf{r}_n\right)\right) \leq \mu\left(R\left(f(\mathbf{x}_0), \xi_n(\mathbf{x}_0)\right)\right)&=r_{n, 1} \cdots r_{n, d} \\ & \leq \mu\left(R\left(f(\mathbf{z}), \xi_n(\mathbf{x}_0)+t \mathbf{r}_n\right)\right).
		\end{aligned}
		$$
		From the definitions of $l_n(\mathbf{z})$ and $\xi_n(\mathbf{z})$, the above inequalities imply that
		$$
		l_n(\mathbf{x}_0)-t \leq l_n(\mathbf{z}) \leq l_n(\mathbf{x}_0)+t,
		$$
		thus
		$$
		R\left(f(\mathbf{z}), \xi_n(\mathbf{x}_0)-t \mathbf{r}_n\right) \subset R\left(f(\mathbf{z}), \xi_n(\mathbf{z})\right) \subset R\left(f(\mathbf{z}), \xi_n(\mathbf{x}_0)+t \mathbf{r}_n\right).
		$$
		If $\mathbf{z}$ also belongs to $\hat{E}_n$, then $T^n \mathbf{z} \in R\left(f(\mathbf{z}), \xi_n(\mathbf{z})\right)$. This leads to
		$$
		T^n \mathbf{z} \in R\left(f(\mathbf{z}), \xi_n(\mathbf{z})\right) \subset R\left(f(\mathbf{z}), \xi_n(\mathbf{x}_0)+t \mathbf{r}_n\right) \subset R\left(f(\mathbf{x}_0), \xi_n(\mathbf{x}_0)+2 t \mathbf{r}_n\right).
		$$
		Hence, $\mathbf{z} \in T^{-n} R\left(f(\mathbf{x}_0), \xi_n(\mathbf{x}_0)+2 t \mathbf{r}_n\right)$. This shows that,
		$$
		A \cap \hat{E}_n \subset A \cap T^{-n} R\left(f\left(\mathbf{x}_0\right), \xi_n(\mathbf{x}_0)+2 t \mathbf{r}_n\right).
		$$
		The second part of the lemma follows similarly, completing the proof.
	\end{proof}

	Next, we need to estimate the $\mu$-measure of the hyperrectangles appearing in Lemma \ref{l35}.
	The proof of the following lemma follows the same proof as \cite[Lemma 2.5]{7}, with the only modification being the use of hyperrectangles centered at $f(\mathbf{x})$.

	\begin{lemma}[{\cite[Lemma 2.5]{7}}] \label{l36}
		Let $s=1-1 / q$. For any $\mathbf{x} \in[0,1]^d$ and $\mathbf{r}=\left(r_1, \ldots, r_d\right) \in\left(\mathbb{R}_{\geq 0}\right)^d$, 
		$$
		\mu\left(R\left(f(\mathbf{x}), \xi_n(\mathbf{x})+\mathbf{r}\right)\right) \leq r_{n, 1} \cdots r_{n, d}+c_1 \cdot \max _{1 \leq i \leq d} r_i^s   $$
		and
		$$
		\mu\left(R\left(f(\mathbf{x}), \xi_n(\mathbf{x})-\mathbf{r}\right)\right) \geq r_{n, 1} \cdots r_{n, d}-c_1 \cdot \max _{1 \leq i \leq d} r_i^s\text{,} 
		$$
		where $c_1=2 d\|h\|_q$.
	\end{lemma}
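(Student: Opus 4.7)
My plan is to pivot off the defining equality $\mu(R(f(\mathbf{x}), \xi_n(\mathbf{x}))) = r_{n,1}\cdots r_{n,d}$ and reduce both inequalities to a single estimate on the $\mu$-measure of the thin annular shell between the base hyperrectangle $R(f(\mathbf{x}), \xi_n(\mathbf{x}))$ and its enlarged (respectively shrunken) version. Setting $S^+ := R(f(\mathbf{x}), \xi_n(\mathbf{x}) + \mathbf{r}) \setminus R(f(\mathbf{x}), \xi_n(\mathbf{x}))$ and $S^- := R(f(\mathbf{x}), \xi_n(\mathbf{x})) \setminus R(f(\mathbf{x}), \xi_n(\mathbf{x}) - \mathbf{r})$, both claims reduce to showing $\mu(S^\pm) \leq c_1 \max_i r_i^s$. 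The lower bound is trivially valid whenever $r_{n,1}\cdots r_{n,d} - c_1\max_i r_i^s$ is negative, so we may assume the inner rectangle is nonempty.

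The next step is a purely geometric Lebesgue bound on $m_d(S^\pm)$. I would decompose $S^\pm$ as a union of $2d$ face-slabs, where the slab in direction $i$ collects those points of the enlarged (resp.\ base) hyperrectangle whose offset from $f_i(\mathbf{x})$ in the $i$-th coordinate lies in a window of width $r_i$ on one side of the original face. Each slab has thickness at most $2r_i$ in direction $i$, and since everything lies inside $[0,1]^d$, its cross-section in the remaining $d-1$ coordinates has Lebesgue measure at most $1$. Summing over the $2d$ faces yields $m_d(S^\pm) \leq 2d \max_{1 \leq i \leq d} r_i$.

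The last step is a Hölder-inequality upgrade from Lebesgue measure to $\mu$. Since $h \in L^q(m_d)$ and the Hölder conjugate of $q$ is $q/(q-1) = 1/s$,
$$\mu(S^\pm) = \int_{S^\pm} h \, dm_d \leq \|h\|_q \cdot m_d(S^\pm)^{s} \leq \|h\|_q \cdot (2d)^s \cdot \max_i r_i^s \leq 2d\|h\|_q \cdot \max_i r_i^s,$$
using $(2d)^s \leq 2d$ since $s \in (0,1)$. This yields both displays with $c_1 = 2d\|h\|_q$. No serious obstacle arises; the only conceptual point worth flagging is that the sublinear exponent $s$ rather than a linear one is intrinsic, and is precisely what Hölder extracts from the $L^q$-integrability of $h$ in the absence of any pointwise bound.
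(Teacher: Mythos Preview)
Your proposal is correct and follows exactly the approach the paper intends. The paper does not spell out a proof of this lemma, instead citing \cite[Lemma~2.5]{7}, but the identical Hölder-plus-slab argument you give appears verbatim in the paper's proof of the continuity of $g$ (equation~\eqref{Eq.3.3} and the sentence following it): bound the $\mu$-measure of the annular shell by $\|h\|_q$ times the $s$-th power of its Lebesgue measure, and bound the latter by decomposing the shell into $2d$ face slabs of thickness $r_i$ and cross-section at most $1$ inside $[0,1]^d$.
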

	
	\begin{remark}
		Under the Condition \ref{122} of Theorem \ref{12}, Lemma \ref{l36} also holds with $s=1$.
	\end{remark}
	
	The method we present next, like most approaches used to establish full $\mu$-measure results, relies on the local Lebesgue density theorem and Paley-Zygmund inequality. We begin by stating the following estimations.
	
	\begin{lemma} \label{l37}
		Let $B \subset[0,1]^d$ be a ball. Then, for sufficiently large $n$,
		$$
		\frac{1}{2} \mu(B) \cdot r_{n, 1} \cdots r_{n, d} \leq \mu\left(B \cap \hat{E}_n\right) \leq 2 \mu(B) \cdot r_{n, 1} \cdots r_{n, d}.
		$$
	\end{lemma}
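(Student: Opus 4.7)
The plan is to reduce $\mu(B\cap\hat E_n)$ to $\mu$-measures of intersections of the form $R_i\cap T^{-n}(\text{fixed hyperrectangle})$, which can be decoupled via the exponential-mixing hypothesis. Because the target $R(f(\mathbf x),\xi_n(\mathbf x))$ in the definition of $\hat E_n$ varies with $\mathbf x$, I would first partition $B$ into cells small enough that the piecewise Lipschitz vector function $f$ is nearly constant on each piece, so that Lemma \ref{l35} converts the $\mathbf x$-dependent target into a fixed one up to a controlled perturbation.

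Concretely, fix a scale parameter $\eta_n>0$ (to be calibrated) and cover $B$ by an essentially disjoint family $\{R_i\}_{i\in\mathcal I}$ of hyperrectangles $R_i=R(\mathbf x_i,\eta_n\mathbf r_n)$ with $\mathbf x_i\in B$, obtained by a Vitali-type construction as in the proof of Proposition \ref{PRC}. Since $R_i\subset f^{-1}R(f(\mathbf x_i),p\eta_n\mathbf r_n)$ by the Lipschitz assumption on $f$, Lemma \ref{l35} applied with $t=p\eta_n$ sandwiches $R_i\cap\hat E_n$ between the preimages
\[
R_i\cap T^{-n}R\bigl(f(\mathbf x_i),\xi_n(\mathbf x_i)\pm 2p\eta_n\mathbf r_n\bigr).
\]
Each hyperrectangle involved satisfies the bounded-boundary property \textbf{(P1)}, so the exponential-mixing estimate \eqref{Eq.1.2} decouples
$\mu(R_i\cap T^{-n}G_i^{\pm})=\mu(R_i)\mu(G_i^{\pm})+O(\phi(n)\mu(G_i^{\pm}))$, while Lemma \ref{l36} gives $\mu(G_i^{\pm})=r_{n,1}\cdots r_{n,d}+O((2p\eta_n r_{n,\max})^s)$ with $r_{n,\max}:=\max_{1\le i\le d}r_{n,i}$. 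Summing over $\mathcal I$ and using that $\sum_i\mu(R_i)\to\mu(B)$ as the grid refines (the strip of cells only partially inside $B$ has vanishing $\mu$-measure since $\eta_n|\mathbf r_n|\to0$) then produces both the upper and the lower inequality in Lemma \ref{l37}, up to a multiplicative $(1+o(1))$ factor.

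The main obstacle is calibrating $\eta_n$ so that the three error contributions vanish simultaneously as $n\to\infty$: the Lemma \ref{l36} error scales like $(\eta_n r_{n,\max})^s/(r_{n,1}\cdots r_{n,d})$, which under the bounded aspect-ratio assumption is comparable to $\eta_n^{s}r_{n,\max}^{s-d}$ and forces $\eta_n\to 0$ at least at the rate $r_{n,\max}^{(d-s)/s}$; the mixing error $\phi(n)\cdot\#\mathcal I\lesssim\phi(n)/(\eta_n^{d}r_{n,1}\cdots r_{n,d})$ needs $\eta_n^{d}r_{n,1}\cdots r_{n,d}$ to decay more slowly than $\phi(n)$; and the boundary-strip error is automatic once $\eta_n|\mathbf r_n|\to0$. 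Exponential mixing $\phi(n)=ce^{-\tau n}$ together with the lower bound $r_{n,i}>n^{-2}$ imposed at the start of Subsection \ref{RD} makes a polynomial choice $\eta_n=n^{-\gamma}$ with $\gamma$ sufficiently large satisfy both constraints simultaneously, since exponential decay beats any polynomial. For the reverse inclusion in Lemma \ref{l35} one further requires $2p\eta_n\le l_n(\mathbf x_i)$; this holds for large $n$ once attention is restricted to the subset of $B$ on which $l_n(\mathbf x)$ stays bounded below, which carries full $\mu$-measure under either of the density assumptions in Theorem \ref{12}, so the complementary null piece can be discarded.
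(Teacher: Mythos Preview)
Your overall strategy---partition $B$ into small cells, invoke Lemma~\ref{l35} to freeze the moving target $R(f(\mathbf{x}),\xi_n(\mathbf{x}))$, decouple via mixing, and control the perturbation with Lemma~\ref{l36}---is exactly the paper's. However, there is a genuine slip: the inclusion $R_i=R(\mathbf{x}_i,\eta_n\mathbf{r}_n)\subset f^{-1}R(f(\mathbf{x}_i),p\eta_n\mathbf{r}_n)$ is false for a general Lipschitz vector function $f$. The Lipschitz bound only yields $|f(\mathbf{z})-f(\mathbf{x}_i)|\le p|\mathbf{z}-\mathbf{x}_i|\le p\eta_n|\mathbf{r}_n|$, so $f(R_i)$ lands in the \emph{ball} $B(f(\mathbf{x}_i),p\eta_n|\mathbf{r}_n|)$, not in the hyperrectangle of shape $p\eta_n\mathbf{r}_n$; the latter would require $f$ to be coordinate-wise. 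After this correction the parameter in Lemma~\ref{l35} becomes $t=p\eta_n|\mathbf{r}_n|/|\mathbf{r}_n|_{\min}$ rather than $t=p\eta_n$. Your calibration can still be repaired (bound $|\mathbf{r}_n|<1$, $|\mathbf{r}_n|_{\min}>n^{-2}$, and take $\gamma$ large), but the use of aspect-ratio-matched cells buys nothing for general $f$. The paper sidesteps this entirely by partitioning $B$ into \emph{balls} (hypercubes) of radius $r=\phi(n)^{1/(2d)}$, tied directly to the mixing rate and independent of $\mathbf{r}_n$: then the mixing error is $\#\mathcal{I}\cdot\phi(n)=r_0^d\phi(n)^{1/2}$ automatically, and $t=pr/|\mathbf{r}_n|_{\min}\le pn^2\phi(n)^{1/(2d)}$ is super-polynomially small, so the Lemma~\ref{l36} error is absorbed with only polynomial mixing and \emph{without} invoking bounded aspect ratio (as the paper notes at the end of Section~\ref{RD}).

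A smaller point: your treatment of the hypothesis $2t\le l_n(\mathbf{x}_i)$ via restriction to a full-measure subset where $l_n$ is bounded below is unnecessary. H\"older's inequality gives a uniform polynomial lower bound $l_n(\mathbf{x})\gtrsim (r_{n,1}\cdots r_{n,d})^{(1-s)/(ds)}>n^{-2(1-s)/s}$ for \emph{every} $\mathbf{x}$, since $r_{n,1}\cdots r_{n,d}=\mu(R(f(\mathbf{x}),l_n(\mathbf{x})\mathbf{r}_n))\le\|h\|_q(2^d l_n(\mathbf{x})^d r_{n,1}\cdots r_{n,d})^s$; no passage to a subset is needed.
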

	
	\begin{proof}
		If $\mathbf{r}_n=\mathbf{0}$, then $\mu\left(B \cap \hat{E}_n\right)=0$ and the lemma holds trivially.
		Recall that a ball here is with respect to the maximum norm so it corresponds to a Euclidean hypercube. Denote the radius of $B$ by $r_0$.
		
		Partition $B$ into $r_0^d \cdot \phi(n)^{-\frac{1}{2}}$ balls with radius $r:=\phi(n)^{\frac{1}{2 d}}$. The collection of these balls is denoted by
		$$
		\left\{B\left(\mathbf{x}_i, r\right): 1 \leq i \leq r_0^d \cdot \phi(n)^{-\frac{1}{2}}\right\}.
		$$
		Using the $p$-Lipschitz condition, we have
		$$
		B\left(\mathbf{x}_i, r\right) \subset f^{-1} B\left(f\left(\mathbf{x}_i\right), p r\right).
		$$
		Let $t=\max _{1 \leq i \leq d}\left(pr / r_{n, i}\right)$, and $2t \leq l_n(\mathbf{x}_i)$ is clear. By Lemma \ref{l35}, we obtain the inclusion
		$$
		B \cap \hat{E}_n=\bigcup_{i \leq r_0^d \cdot \phi(n)^{-\frac{1}{2}}} B\left(\mathbf{x}_i, r\right) \cap \hat{E}_n \supset \bigcup_{i \leq r_0^d \cdot \phi(n)^{-\frac{1}{2}}} B\left(\mathbf{x}_i, r\right) \cap T^{-n} R\left(f\left(\mathbf{x}_i\right), \xi_n(\mathbf{x}_i)-2 t \mathbf{r}_n\right).
		$$
		Since $\mu$ is polynomial-mixing with respect to $(T, \mathcal{C})$, and by Lemma \ref{l36}, we have
		$$
		\begin{aligned}
			\mu\left(B \cap \hat{E}_n\right) & \geq \sum_{i \leq r_0^d \cdot \phi(n)^{-\frac{1}{2}}} \mu\left(B\left(\mathbf{x}_i, r\right) \cap T^{-n} R\left(f(\mathbf{x}_i), \xi_n\left(\mathbf{x}_i\right)-2 t \mathbf{r}_n\right)\right) \\
			& \geq \sum_{i \leq r_0^d \cdot \phi(n)^{-\frac{1}{2}}}\left(\mu\left(B\left(\mathbf{x}_i, r\right)\right)-\phi(n)\right) \mu\left(R\left(f(\mathbf{x}_i), \xi_n\left(\mathbf{x}_i\right)-2 t \mathbf{r}_n\right)\right) \\
			& \geq \sum_{i \leq r_0^d \cdot \phi(n)^{-\frac{1}{2}}}\left(\mu\left(B\left(\mathbf{x}_i, r\right)\right)-\phi(n)\right)\left(r_{n, 1} \cdots r_{n, d}-c_1(2 t)^s\right) \\
			& =\left(\mu(B)- r_0^d \phi(n)^{\frac{1}{2 }}\right)\left(r_{n, 1} \cdots r_{n, d}-c_1(2 t)^s\right),
		\end{aligned}
		$$
		where we use $\lim _{n \rightarrow \infty}\left|\mathbf{r}_n\right|=0<1$ in the third inequality. Since $\mathbf{r}_n \neq \mathbf{0}$, and based on the assumption about the sequence $\left\{\mathbf{r}_n\right\}$, we have $r_{n, i}>n^{-2}$ for all $1 \leq i \leq d$. Hence, 
		$$
		t^s=\left(\max _{1 \leq i \leq d} \frac{pr}{r_{n, i}}\right)^s \leq  p^s n^{2s}  \phi(n)^{\frac{s}{2 d}}< \frac{1}{c_1 2^{s+2}}r_{n, 1} \cdots r_{n, d}.
		$$
		By substituting the upper bound for $t^s$, we deduce that for all sufficiently large $n$,
		$$
		\mu\left(B \cap \hat{E}_n\right) \geq \frac{1}{2} \mu(B) \cdot r_{n, 1} \cdots r_{n, d}.
		$$
		The upper estimation for $\mu\left(B \cap \hat{E}_n\right)$ follows similarly by
		$$
		B \cap \hat{E}_n=\bigcup_{i \leq r_0^d \cdot \phi(n)^{-\frac{1}{2}}} B\left(\mathbf{x}_i, r\right) \cap \hat{E}_n \subset \bigcup_{i \leq r_0^d \cdot \phi(n)^{-\frac{1}{2}}} B\left(\mathbf{x}_i, r\right) \cap T^{-n} R\left(f(\mathbf{x}_i), \xi_n\left(\mathbf{x}_i\right)+2 t \mathbf{r}_n\right).
		$$	\end{proof}
	
	Next, we estimate the $\mu$-measure of $B \cap \hat{E}_m \cap \hat{E}_n$ with $m<n$.
	If $\mathbf{r}_m=\mathbf{0}$ or $\mathbf{r}_n=\mathbf{0}$, then $\mu\left(\hat{E}_m\right)=\mu\left(\hat{E}_n\right)=0$. Therefore, for any ball $B$, we have $\mu\left(B \cap \hat{E}_m \cap \hat{E}_n\right)=0$, then the following lemmas hold trivially. We assume that neither $\mathbf{r}_m$ nor $\mathbf{r}_n$ is $\mathbf{0}$. Then, according to the assumptions at the beginning of Section \ref{RD}, we have $r_{m, i}>m^{-2}$ and $r_{n, i}>n^{-2}$ for all $1 \leq i \leq d$. The following lemma established in \cite[Lemma 2.8]{7} serves as the pivotal tool for estimating $\mu\left(B \cap \hat{E}_m \cap \hat{E}_n\right)$.

	\begin{lemma}[{\cite[Lemma 2.8]{7}}] \label{l38}
		Let $T:[0,1]^d \rightarrow[0,1]^d$ be a piecewise expanding map. For any $n \geq 1$, any $J_n \in \mathcal{F}_n$, and any hyperrectangles $R_1, R_2 \subset[0,1]^d$, 
		$$
		M^{*(d-1)}\left(\partial\left(J_n \cap R_1 \cap T^{-n} R_2\right)\right) \leq 4 d c_M+\frac{Kc_M}{1-L^{-(d-1)}},
		$$
		where $L$ and $K$ are constants defined in  Definition \ref{pem}, and $c_M$ is a constant defined before Definition \ref{pem}.
	\end{lemma}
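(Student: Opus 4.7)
The plan is to exploit the elementary inclusion $\partial(A_1\cap A_2\cap A_3)\subseteq\partial A_1\cup\partial A_2\cup\partial A_3$, together with the subadditivity of upper Minkowski content, to reduce the problem to bounding $M^{*(d-1)}$ of each of the three pieces $\partial R_1$, $\partial(T^{-n}R_2)\cap\overline{J_n}$, and $\partial J_n$ separately. All sets appearing below will be closed $(d-1)$-rectifiable, so by the Federer identification cited just before Definition \ref{pem} one may pass freely between $M^{*(d-1)}$, $\mathcal{H}^{d-1}$, and $c_M\cdot m_{d-1}$.

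For the first piece, $\partial R_1$ is a union of $2d$ coordinate-parallel hyperrectangle faces, each of $m_{d-1}$-measure at most $1$, so $M^{*(d-1)}(\partial R_1)\le 2dc_M$. For the second piece, the key point is that $T^n$ restricted to $\overline{J_n}$ is a $C^1$ injection with $\|D_{\mathbf{x}}T^n\|\ge L^n$, so by the chain rule and Definition \ref{pem} the inverse branch $(T^n|_{\overline{J_n}})^{-1}$ is Lipschitz with constant $L^{-n}$. The standard scaling law that Lipschitz maps of constant $C$ multiply $\mathcal{H}^{d-1}$ by at most $C^{d-1}$ then yields
\[
M^{*(d-1)}\bigl(\partial(T^{-n}R_2)\cap\overline{J_n}\bigr)\le L^{-n(d-1)}\cdot 2dc_M\le 2dc_M,
\]
giving the second $2dc_M$ contribution.

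The most delicate step is $\partial J_n$. Writing $J_n=\bigcap_{k=0}^{n-1}T^{-k}U_{i_k}$, I would use the inclusion
\[
\partial J_n\subseteq\bigcup_{k=0}^{n-1}\bigl\{\mathbf{x}\in\overline{J_n}:T^k\mathbf{x}\in\partial U_{i_k}\bigr\}
\]
and apply the same pullback argument inside each order-$k$ cylinder containing $J_n$: on such a cylinder $T^k$ is a $C^1$ injection with expansion at least $L^k$, so its inverse branch is Lipschitz with constant $L^{-k}$. Combining the hypothesis $M^{*(d-1)}(\partial U_{i_k})\le K$ from Definition \ref{pem} with this scaling produces a bound of order $c_M K L^{-k(d-1)}$ for the $k$-th preimage, and summing the resulting geometric series yields $c_M K/(1-L^{-(d-1)})$. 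Adding the three contributions then gives exactly $4dc_M+Kc_M/(1-L^{-(d-1)})$.

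The main technical obstacle is verifying, in the presence of the piecewise structure, that each preimage $\{\mathbf{x}\in\overline{J_n}:T^k\mathbf{x}\in\partial U_{i_k}\}$ really is a Lipschitz image (with constant $L^{-k}$) of a $(d-1)$-rectifiable set, so that the Federer identification and the geometric scaling apply termwise rather than just in the limit. This hinges on extending the appropriate inverse branch of $T^k$ continuously and Lipschitzly to the closure of the relevant order-$k$ cylinder, which in turn rests on Definition \ref{pem} and the uniform expansion estimate $\inf_i\inf_{\mathbf{x}\in U_i}\|D_{\mathbf{x}}T\|\ge L$.
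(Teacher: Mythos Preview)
The paper does not supply its own proof of this lemma; it is quoted verbatim from \cite[Lemma~2.8]{7} and invoked as a black box. Your sketch is therefore not competing with anything in the paper, and in outline it is exactly the argument one expects: decompose the boundary via $\partial(J_n\cap R_1\cap T^{-n}R_2)\subseteq \partial R_1\cup\bigl(\partial(T^{-n}R_2)\cap\overline{J_n}\bigr)\cup\partial J_n$, use that faces of axis-parallel hyperrectangles in $[0,1]^d$ have $m_{d-1}\le 1$, pull back $\partial R_2$ through the single inverse branch of $T^n$ on $\overline{J_n}$, and handle $\partial J_n$ by writing it as a union of pulled-back pieces of $\partial U_{i_k}$ and summing the resulting geometric series in $L^{-(d-1)}$.

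Two small points are worth tightening. First, the Lipschitz constant $L^{-k}$ for the inverse branch of $T^k$ requires that $T$ expand by at least $L$ in \emph{every} direction, i.e.\ that the co-norm $\inf_{\mathbf z\neq 0}|(D_{\mathbf x}T)\mathbf z|/|\mathbf z|\ge L$; the quantity $\|D_{\mathbf x}T\|$ in Definition~\ref{pem} is written as a supremum, so you should flag that the intended reading (consistent with the matrix-transformation remark and with \cite{7}) is the uniform lower bound on expansion. Second, the placement of $c_M$ is a normalisation matter: passing through $M^{*(d-1)}=\mathcal H^{d-1}$ on rectifiable sets and using $\mathcal H^{d-1}(g(A))\le (\mathrm{Lip}\,g)^{d-1}\mathcal H^{d-1}(A)$ gives the term $K L^{-k(d-1)}$ without an extra $c_M$, so the stated bound is (harmlessly) looser by that factor. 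Neither point affects the validity of your approach.
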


	 For precision and convenience, we introduce the extended form of $\phi(n)=c e^{-\tau n}$ as
	$$\phi_1(x): =c e^{-\tau x},$$
	where $c$ and $\tau$ are defined in Definition \ref{mixing}. Clearly, $\phi_1: \mathbb{R}_{+} \rightarrow (0,1)$ is a continuous function, and its inverse function $\phi_1^{-1}(x)=1 / \tau \log (c / x)$ exists. 
	
	\begin{lemma} \label{l39}
		Let $B \subset[0,1]^d$ be a ball. Then, there exists a constant $c_2$ such that for all large integers $m$ and $n$ with $\phi_1^{-1}\left(p^{-2 d} n^{-4d(d/s+1)}\right) \leq m<n$, 
		$$
		\mu\left(B \cap \hat{E}_m \cap \hat{E}_n\right) \leq c_2 \mu(B)\left(r_{m, 1} \cdots r_{m, d}+\phi(n-m)\right) \cdot r_{n, 1} \cdots r_{n, d}.
		$$
	\end{lemma}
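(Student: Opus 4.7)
The argument parallels Lemma~\ref{l37} but now handles two time scales $m<n$ simultaneously through a double application of exponential mixing. I would first partition $B$ into balls $\{B(\mathbf{x}_i,r)\}_{i\in I}$ where $r$ is proportional to $(p\, n^{2(d/s+1)})^{-1}$; this scaling is chosen so that the hypothesis $m\ge\phi_1^{-1}(p^{-2d}n^{-4d(d/s+1)})$ translates into $\phi(m)\le r^{2d}$, and simultaneously $c_1(2Cpr)^s$ is dominated by both $r_{m,1}\cdots r_{m,d}$ and $r_{n,1}\cdots r_{n,d}$ (using the blanket bound $r_{m,i},r_{n,i}>n^{-2}$, where $C$ bounds the aspect ratio of $\{\mathbf{r}_n\}$). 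Applying Lemma~\ref{l35} twice on each small ball with $t_m=pr/|\mathbf{r}_m|_{\text{min}}$ and $t_n=pr/|\mathbf{r}_n|_{\text{min}}$ gives
\[
B(\mathbf{x}_i,r)\cap\hat{E}_m\cap\hat{E}_n\subset B(\mathbf{x}_i,r)\cap T^{-m}R_{i,m}\cap T^{-n}R_{i,n},
\]
where $R_{i,m}:=R(f(\mathbf{x}_i),\xi_m(\mathbf{x}_i)+2t_m\mathbf{r}_m)$ and $R_{i,n}:=R(f(\mathbf{x}_i),\xi_n(\mathbf{x}_i)+2t_n\mathbf{r}_n)$ are hyperrectangles.

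Writing $T^{-n}=T^{-m}\circ T^{-(n-m)}$ recasts the right-hand side as $B(\mathbf{x}_i,r)\cap T^{-m}\bigl[R_{i,m}\cap T^{-(n-m)}R_{i,n}\bigr]$. Since the bracketed set is not itself in $\mathcal{C}_1$, I would partition it by cylinders $J_{n-m}\in\mathcal{F}_{n-m}$: Lemma~\ref{l38} ensures $J_{n-m}\cap R_{i,m}\cap T^{-(n-m)}R_{i,n}\in\mathcal{C}_1$, so mixing at time $m$ applies piecewise with $B(\mathbf{x}_i,r)\in\mathcal{C}_1$. Summing the mixing inequalities over $J_{n-m}$ collapses the errors into $\phi(m)\mu(R_{i,m}\cap T^{-(n-m)}R_{i,n})$, so no cylinder-count factor appears:
\[
\mu\bigl(B(\mathbf{x}_i,r)\cap T^{-m}[R_{i,m}\cap T^{-(n-m)}R_{i,n}]\bigr)\le\bigl(\mu(B(\mathbf{x}_i,r))+\phi(m)\bigr)\mu\bigl(R_{i,m}\cap T^{-(n-m)}R_{i,n}\bigr).
\]
A second application of mixing at time $n-m$ (both $R_{i,m}$ and $R_{i,n}$ being hyperrectangles, hence in $\mathcal{C}_1$) bounds the remaining correlation by $(\mu(R_{i,m})+\phi(n-m))\mu(R_{i,n})$, and Lemma~\ref{l36} gives $\mu(R_{i,m})\le r_{m,1}\cdots r_{m,d}+c_1(2Cpr)^s$ with the analogous bound for $R_{i,n}$.

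Summing over $i\in I$ (using $\#I\le r_0^d/r^d$ and $\sum_i\mu(B(\mathbf{x}_i,r))=\mu(B)$), the $c_1(2Cpr)^s$ terms are absorbed into the principal factors by the choice of $r$, while $\#I\cdot\phi(m)\le r_0^d r^d\le\mu(B)$ (valid for all sufficiently large $n$) absorbs the time-$m$ mixing error into $\mu(B)$. This delivers the claimed estimate with a universal constant $c_2$. The main obstacle will be the bookkeeping of three independent error sources—the covering overcount $\#I$, the $(d-1)$-dimensional leakage from Lemma~\ref{l36}, and the two distinct mixing losses at times $m$ and $n-m$—all balanced against the single radius parameter $r$; particular care is needed to ensure that $c_2$ is independent of $B$, with only the threshold for ``$m,n$ sufficiently large'' allowed to depend on $B$.
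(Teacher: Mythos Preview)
Your proof is correct and essentially identical to the paper's: the same partition of $B$ into small balls, the same double application of Lemma~\ref{l35}, the same cylinder decomposition via Lemma~\ref{l38} to apply mixing first at time $m$ and then at time $n-m$, and the same use of Lemma~\ref{l36} to control the perturbed-rectangle measures. The only differences are cosmetic: the paper takes $r=\phi(m)^{1/(2d)}$ instead of your $r\sim(pn^{2(d/s+1)})^{-1}$ (the hypothesis on $m$ makes these comparable, since it forces $\phi(m)^{1/(2d)}\le(pn^{2(d/s+1)})^{-1}$), and the paper bounds the Lemma~\ref{l36} error as $c_1(2t_m)^s$ using only $|\mathbf{r}_m|<1$ rather than invoking the aspect-ratio constant $C$---a point worth noting, since the paper later remarks that the entire divergence argument does not require the bounded-aspect-ratio hypothesis, whereas your version does.
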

	\begin{proof}
		Let $r_0$ be the radius of $B$. Partition $B$ into $r_0^d \cdot \phi(m)^{-\frac{1}{2}}$ balls with radius $r:=\phi(m)^{\frac{1}{2 d}}$. The collection of these balls is denoted by
		$$
		\left\{B\left(\mathbf{x}_i, r\right): 1 \leq i \leq r_0^d \cdot \phi(m)^{-\frac{1}{2}}\right\}.
		$$
		By the $p$-Lipschitz condition, we have
		$$
		B\left(\mathbf{x}_i, r\right) \subset f^{-1} B\left(f\left(\mathbf{x}_i\right), p r\right) .
		$$
		Let $t_m=\max _{1 \leq i \leq d}\left(pr / r_{m, i}\right)$ and $t_n=\max _{1 \leq i \leq d}\left(pr / r_{n, i}\right)$. For each ball $B\left(\mathbf{x}_i, r\right)$, by Lemma \ref{l35}, we have 
		$$
		\begin{aligned}
			&	B \cap \hat{E}_m \cap \hat{E}_n  \\ & \subset \bigcup_{i \leq r_0^d \phi(m)^{-\frac{1}{2}}} B\left(\mathbf{x}_i, r\right) \cap T^{-m} R\left(f(\mathbf{x}_i), \xi_m\left(\mathbf{x}_i\right)+2 t_m \mathbf{r}_m\right) \cap T^{-n} R\left(f(\mathbf{x}_i), \xi_n\left(\mathbf{x}_i\right)+2 t_n \mathbf{r}_n\right).		
		\end{aligned}
		$$
		For notational simplicity, we write
		$$
		R_i(m)=R\left(f(\mathbf{x}_i), \xi_m\left(\mathbf{x}_i\right)+2 t_m \mathbf{r}_m\right) \quad \text { and } \quad R_i(n)=R\left(f(\mathbf{x}_i), \xi_n\left(\mathbf{x}_i\right)+2 t_n \mathbf{r}_n\right).
		$$
		By Lemma \ref{l38}, the sets $J_{n-m} \cap R_i(m) \cap T^{-(n-m)} R_i(n)$ satisfy the bounded property \textbf { (P1)}. Applying the polynomial-mixing property of $\mu$ and Lemma \ref{l36}, we have
		$$
		\begin{aligned}
			& \mu\left(B\left(\mathbf{x}_i, r\right) \cap T^{-m} R_i(m) \cap T^{-n} R_i(n)\right) \\
			= & \sum_{J_{n-m} \in \mathcal{F}_{n-m}} \mu\left(B\left(\mathbf{x}_i, r\right) \cap T^{-m}\left(J_{n-m} \cap R_i(m) \cap T^{-(n-m)} R_i(n)\right)\right) \\
			\leq & \sum_{J_{n-m} \in \mathcal{F}_{n-m}}\left(\mu\left(B\left(\mathbf{x}_i, r\right)\right)+\phi(m) \right) \mu\left(J_{n-m} \cap R_i(m) \cap T^{-(n-m)} R_i(n)\right) \\
			= & \left(\mu\left(B\left(\mathbf{x}_i, r\right)\right)+\phi(m)\right) \mu\left(R_i(m) \cap T^{-(n-m)} R_i(n)\right) \\
			\leq & \left(\mu\left(B\left(\mathbf{x}_i, r\right)\right)+\phi(m)\right)\left(\mu\left(R_i(m)\right)+\phi(n-m)\right) \mu\left(R_i(n)\right) \\
			\leq & 
			\left(\mu\left(B\left(\mathbf{x}_i, r\right)\right)+\phi(m)\right)\left(r_{m, 1} \cdots r_{m, d}+c_1\left(2 t_m\right)^s+\phi(n-m)\right)\left(r_{n, 1} \cdots r_{n, d}+c_1\left(2 t_n\right)^s\right).
		\end{aligned}
		$$
		Summing over $i \leq r_0^d \phi(m)^{-\frac{1}{2}}$, we have
		$$
		\begin{aligned}
			&\mu\left(B \cap \hat{E}_m \cap \hat{E}_n\right) \\ &\leq
			\left(\mu\left(B\right)+r_0^d\phi(m)^{\frac{1}{2}}\right)\left(r_{m, 1} \cdots r_{m, d}+c_1\left(2 t_m\right)^s+\phi(n-m)\right)\left(r_{n, 1} \cdots r_{n, d}+c_1\left(2 t_n\right)^s\right).
		\end{aligned}
		$$
		Since $r_{m, i}>m^{-2}$ and $r_{n, i}>n^{-2}$, we have
		$$
		t^s_m=\max _{1 \leq i \leq d} \frac{p^s \phi(m)^{\frac{s}{2 d}}}{r_{m, i}^s} \leq p^s m^{2s}\phi(m)^{\frac{s}{2 d}} \leq r_{m, 1} \cdots r_{m, d}.
		$$
		From $\phi_1^{-1}\left(p^{-2 d} n^{-4d(d/s+1)}\right) \leq m<n$ that
		$$
		t_n^s=\left(\max _{1 \leq i \leq d} \frac{p \phi(m)^{\frac{1}{2 d }}}{r_{n, i}}\right)^s \leq p^s n^{2s} \phi(m)^{\frac{s}{2 d}} \leq n^{-2d} < r_{n, 1} \cdots r_{n, d}.
		$$
		Thus, there exists a constant $c_2$ such that for any sufficiently large integers $m$ and $n$ with $\phi_1^{-1}\left(p^{-2 d} n^{-4d(d/s+1)}\right) \leq m<n$, 
		$$
		\mu\left(B \cap \hat{E}_m \cap \hat{E}_n\right) \leq c_2 \mu(B)\left(r_{m, 1} \cdots r_{m, d}+\phi(n-m)\right) \cdot r_{n, 1} \cdots r_{n, d}.
		$$
	\end{proof}
	
	The following lemma was stated and used in \cite[Lemma 2.10]{7}. We provide the proof here, as our case differs slightly.
	
	\begin{lemma} \label{l310}
		Let $T:[0,1]^d \rightarrow[0,1]^d$ be a piecewise expanding map. Then, there exists a constant $c_3$ such that for any integers $m$ and $n$ with $m < \phi_1^{-1}\left(p^{-2 d} n^{-4d(d/s+1)}\right)$, and any hyperrectangles $R_1, R_2, R_3 \subset[0,1]^d$, 
		$$
		\mu\left(R_1 \cap T^{-m} R_2 \cap T^{-n} R_3\right) \leq\left(\mu\left(R_1 \cap T^{-m} R_2\right)+c_3 \phi(n)^{\frac{1}{2}}\right) \mu\left(R_3\right).
		$$
		
	\end{lemma}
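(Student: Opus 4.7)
The plan is to reduce the triple intersection to a sum of double intersections, one per cylinder of order $m$, and then apply the exponential mixing property between times $0$ and $n$ on each piece. The key point is that although $R_1\cap T^{-m}R_2$ need not have bounded $(d-1)$-dimensional upper Minkowski content on its own (so mixing cannot be applied to it directly), Lemma \ref{l38} guarantees that $J_m\cap R_1\cap T^{-m}R_2$ does lie in $\mathcal{C}_1$ for every cylinder $J_m\in\mathcal{F}_m$. The number of such cylinders is at most $Q^m$, and the regime $m<\phi_1^{-1}\!\bigl(p^{-2d}n^{-4d(d/s+1)}\bigr)$ forces $m=O(\log n)$, so $Q^m$ is only polynomial in $n$, whereas $\phi(n)^{-1/2}$ is super-polynomial; the error terms $Q^m\phi(n)$ are then absorbed into $c_3\phi(n)^{1/2}$.

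Concretely, I would first write the disjoint decomposition
$$
R_1\cap T^{-m}R_2\cap T^{-n}R_3\;=\;\bigsqcup_{J_m\in\mathcal{F}_m}\bigl(J_m\cap R_1\cap T^{-m}R_2\bigr)\cap T^{-n}R_3,
$$
apply Lemma \ref{l38} to conclude that each $F_{J_m}:=J_m\cap R_1\cap T^{-m}R_2$ belongs to $\mathcal{C}_1$, and invoke the exponential mixing inequality \eqref{Eq.1.2} with $F=F_{J_m}$ and $G=R_3$, at time $n$, to obtain
$$
\mu\bigl(F_{J_m}\cap T^{-n}R_3\bigr)\;\leq\;\bigl(\mu(F_{J_m})+\phi(n)\bigr)\,\mu(R_3).
$$
Summing over $J_m\in\mathcal{F}_m$ and using that the $F_{J_m}$ form a disjoint cover of $R_1\cap T^{-m}R_2$ produces
$$
\mu\bigl(R_1\cap T^{-m}R_2\cap T^{-n}R_3\bigr)\;\leq\;\bigl(\mu(R_1\cap T^{-m}R_2)+|\mathcal{F}_m|\,\phi(n)\bigr)\mu(R_3).
$$

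Finally I would verify that $|\mathcal{F}_m|\,\phi(n)\leq c_3\,\phi(n)^{1/2}$ for a uniform constant $c_3$. The cylinder count satisfies $|\mathcal{F}_m|\leq Q^m$, while the hypothesis $m<\phi_1^{-1}\!\bigl(p^{-2d}n^{-4d(d/s+1)}\bigr)$ together with $\phi_1^{-1}(y)=\tau^{-1}\log(c/y)$ yields $m\leq \tau^{-1}\bigl(\log c+2d\log p+4d(d/s+1)\log n\bigr)$, so $Q^m$ is bounded by a polynomial in $n$; since $\phi(n)^{1/2}=c^{1/2}e^{-\tau n/2}$ has an exponentially large reciprocal, the inequality $Q^m\phi(n)\leq c_3\phi(n)^{1/2}$ holds comfortably for all sufficiently large $n$, and the finitely many small $n$ can be absorbed by enlarging $c_3$. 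The only place one must be careful is in the bookkeeping of the constants $Q,\tau,c,p,d,s$, but since the gap between $Q^m$ and $\phi(n)^{-1/2}$ is super-polynomial, this is the main, yet essentially routine, step.
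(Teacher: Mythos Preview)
Your proof is correct and follows essentially the same approach as the paper: decompose over cylinders $J_m\in\mathcal{F}_m$, use Lemma~\ref{l38} to place each $J_m\cap R_1\cap T^{-m}R_2$ in $\mathcal{C}_1$, apply exponential mixing at time $n$, sum, and then bound $\#\mathcal{F}_m\le Q^m$ by a polynomial in $n$ via the hypothesis on $m$ so that $Q^m\phi(n)\le c_3\phi(n)^{1/2}$. The paper records the explicit polynomial bound $\#\mathcal{F}_m<c_3\,n^{4d(d/s+1)\log Q/\tau}$ and leaves the final comparison implicit, but the argument is the same.
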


	\begin{proof} It is clear that
		$$
		\mu\left(R_1 \cap T^{-m} R_2 \cap T^{-n} R_3\right)=\sum_{J_m \in \mathcal{F}_m} \mu\left(J_m \cap R_1 \cap T^{-m} R_2 \cap T^{-n} R_3\right).
		$$
		By Lemma \ref{l38}, the sets $J_m \cap R_1 \cap T^{-m} R_2$ satisfy the bounded property \textbf {(P1)}. Now, applying the exponential-mixing property of $\mu$, we have
		$$
		\begin{aligned}
			& \sum_{J_m \in \mathcal{F}_m} \mu\left(J_m \cap R_1 \cap T^{-m} R_2 \cap T^{-n} R_3\right) \\
			\leq & \sum_{J_m \in \mathcal{F}_m}\left(\mu\left(J_m \cap R_1 \cap T^{-m} R_2\right)+\phi(n)\right) \mu\left(R_3\right) \\
			= & \left(\mu\left(R_1 \cap T^{-m} R_2\right)+\# \mathcal{F}_m \cdot \phi(n)\right) \mu\left(R_3\right).
		\end{aligned}
		$$
		Next, since $m < \phi_1^{-1}\left(p^{-2 d} n^{-4d(d/s+1)}\right)$, we have
		$$
		\# \mathcal{F}_m \leq Q^m < Q^{\frac{\log c p^{2 d} n^{4d(d/s+1)}}{\tau}} < c_3 n^{\frac{4d(d/s+1) \log Q}{\tau}}. 
		$$
		Applying this upper bound, we complete the proof.
	\end{proof}
	
	We are now prepared to estimate the $\mu\left(B \cap \hat{E}_m \cap \hat{E}_n\right)$ in the case that $m < \phi_1^{-1}\left(p^{-2 d} n^{-4d(d/s+1)}\right)$.

	\begin{lemma} \label{l311} Let $T:[0,1]^d \rightarrow[0,1]^d$ be a piecewise expanding map. Let $B \subset[0,1]^d$ be a ball. Then, there exists a constant $c_4$ such that for all integers $m$ and $n$ satisfying $m < \phi_1^{-1}\left(p^{-2 d} n^{-4d(d/s+1)}\right)$, 
		$$
		\mu\left(B \cap \hat{E}_m \cap \hat{E}_n\right) \leq c_4\left(\mu(B)+\phi(m)^{\frac{1}{2}}\right) \cdot r_{m, 1} \cdots r_{m, d} \cdot r_{n, 1} \cdots r_{n, d}.
		$$
		In particular, if $m \geq-\frac{2 \log c^{-1/2}\mu(B)}{\tau}$, then
		$$
		\mu\left(B \cap \hat{E}_m \cap \hat{E}_n\right) \leq 2 c_4 \mu(B) \cdot r_{m, 1} \cdots r_{m, d} \cdot r_{n, 1} \cdots r_{n, d}.
		$$
	\end{lemma}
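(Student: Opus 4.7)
The strategy is to imitate the proof of Lemma \ref{l39}, but with the crucial substitution of Lemma \ref{l310} in place of a direct mixing step between times $m$ and $n$; this substitution is exactly what makes the small-$m$ regime tractable. Specifically, I would first partition $B$ into $N=r_0^{d}\phi(m)^{-1/2}$ balls $B(\mathbf{x}_i,r)$ of radius $r=\phi(m)^{1/(2d)}$, where $r_0$ denotes the radius of $B$. For each such ball, Lemma \ref{l35} (applied with $\mathbf{r}=(r,\ldots,r)$) furnishes the inclusion
\[
B(\mathbf{x}_i,r)\cap\hat{E}_m\cap\hat{E}_n\ \subset\ B(\mathbf{x}_i,r)\cap T^{-m}R_i(m)\cap T^{-n}R_i(n),
\]
where $R_i(m):=R(f(\mathbf{x}_i),\xi_m(\mathbf{x}_i)+2t_m\mathbf{r}_m)$ and $R_i(n):=R(f(\mathbf{x}_i),\xi_n(\mathbf{x}_i)+2t_n\mathbf{r}_n)$, with $t_m=pr/|\mathbf{r}_m|_{\text{min}}$ and $t_n=pr/|\mathbf{r}_n|_{\text{min}}$; note that $B(\mathbf{x}_i,r)$, being a max-norm ball, is itself a hypercube (and hence a hyperrectangle).

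The assumption $m<\phi_1^{-1}(p^{-2d}n^{-4d(d/s+1)})$ renders Lemma \ref{l310} applicable and gives
\[
\mu\bigl(B(\mathbf{x}_i,r)\cap T^{-m}R_i(m)\cap T^{-n}R_i(n)\bigr)\le\bigl(\mu(B(\mathbf{x}_i,r)\cap T^{-m}R_i(m))+c_3\phi(n)^{1/2}\bigr)\mu(R_i(n)).
\]
Since both $B(\mathbf{x}_i,r)$ and $R_i(m)$ lie in $\mathcal{C}_1$, the exponential-mixing property yields $\mu(B(\mathbf{x}_i,r)\cap T^{-m}R_i(m))\le(\mu(B(\mathbf{x}_i,r))+\phi(m))\mu(R_i(m))$, while Lemma \ref{l36} estimates $\mu(R_i(m))$ and $\mu(R_i(n))$ in terms of $r_{m,1}\cdots r_{m,d}$, $r_{n,1}\cdots r_{n,d}$ and the error terms $c_1(2t_m)^s$, $c_1(2t_n)^s$. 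Summing over $i$, together with $\sum_i\mu(B(\mathbf{x}_i,r))=\mu(B)$ and $N\phi(m)=r_0^d\phi(m)^{1/2}$, then yields the claimed bound once one absorbs the leftover $Nc_3\phi(n)^{1/2}\mu(R_i(n))$ contribution into $\phi(m)^{1/2}r_{m,1}\cdots r_{m,d}\cdot r_{n,1}\cdots r_{n,d}$; this absorption holds because $\phi(n)^{1/2}$ decays exponentially in $n$, while $\phi(m)\cdot r_{m,1}\cdots r_{m,d}$ is only polynomially small (using $r_{m,i}>m^{-2}$ and the fact that $m=O(\log n)$ forced by the hypothesis on $m$).

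For the ``in particular'' refinement, the lower bound $m\ge -\tfrac{2}{\tau}\log(c^{-1/2}\mu(B))$ rearranges directly to $\phi(m)^{1/2}=c^{1/2}e^{-\tau m/2}\le\mu(B)$, so that $\mu(B)+\phi(m)^{1/2}\le 2\mu(B)$ and the refined estimate follows immediately from the first part. The principal technical obstacle is the careful bookkeeping of the error terms $c_1(2t_m)^s$ and $c_1(2t_n)^s$ from Lemma \ref{l36}: the hypothesis on $m$ cuts in the opposite direction to what is needed to force $t_n^s\le r_{n,1}\cdots r_{n,d}$ directly, so one must combine the bounded-aspect-ratio hypothesis on $\mathbf{r}_m,\mathbf{r}_n$ (which ties $|\mathbf{r}_n|_{\text{min}}$ to $(r_{n,1}\cdots r_{n,d})^{1/d}$) with the standing bounds $r_{m,i}>m^{-2}$, $r_{n,i}>n^{-2}$ in order to absorb these contributions into the constant $c_4$.
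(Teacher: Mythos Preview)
Your overall strategy---localize via Lemma \ref{l35}, then apply Lemma \ref{l310} to peel off the $T^{-n}$ factor, then use mixing at time $m$ and Lemma \ref{l36}---is the right one, but the single partition at scale $r=\phi(m)^{1/(2d)}$ cannot work. The problem is precisely the error term $c_1(2t_n)^s$ that you flag at the end. With $r=\phi(m)^{1/(2d)}$ one has $t_n^s \ge p^s n^{2s}\phi(m)^{s/(2d)}$, and the hypothesis $m<\phi_1^{-1}(p^{-2d}n^{-4d(d/s+1)})$ is equivalent to $\phi(m)>p^{-2d}n^{-4d(d/s+1)}$, i.e.\ $\phi(m)^{s/(2d)}>p^{-s}n^{-2(d+s)}$; combining, $t_n^s>n^{-2d}$ from \emph{below}, which is the wrong direction. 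Neither the bounded-aspect-ratio hypothesis (which the paper explicitly does \emph{not} invoke in the divergence part) nor the standing bound $r_{n,i}>n^{-2}$ rescues this: the issue is not the shape of $\mathbf{r}_n$ but that $\phi(m)^{1/(2d)}$ is far too coarse a scale when $m=O(\log n)$.

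The paper circumvents this with a \emph{two-scale} argument. First partition $B$ at the very fine scale $r_1=\phi(n)^{1/(4d)}$; this makes $u_n=pr_1/|\mathbf{r}_n|_{\min}$ small enough that $(2u_n)^s\lesssim r_{n,1}\cdots r_{n,d}$, and one applies Lemma \ref{l310} on these tiny balls to separate the $T^{-n}$ factor. The resulting sum $\sum_i\mu(G_i)$ with $G_i=B(\mathbf{x}_i,r_1)\cap T^{-m}R(f(\mathbf{x}_i),\xi_m(\mathbf{x}_i)+2u_m\mathbf{r}_m)$ is then controlled by a second, coarser partition of $B$ at scale $r_2=\phi(m)^{1/(2d)}$: one shows $\bigcup_i G_i\subset\bigcup_j B(\mathbf{z}_j,r_2)\cap T^{-m}R(f(\mathbf{z}_j),\xi_m(\mathbf{z}_j)+6v_m\mathbf{r}_m)$ via a short triangle-inequality argument on the centres, and then applies mixing at time $m$ on this coarser family. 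The two scales are what reconcile the conflicting requirements on $t_m$ and $t_n$; a single scale cannot do both jobs simultaneously in the regime $m<\phi_1^{-1}(p^{-2d}n^{-4d(d/s+1)})$.
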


	\begin{proof} 
		Denote the radius of $B$ by $r_0$. Partition $B$ into $r_0^d \cdot \phi(n)^{-\frac{1}{4}}$ balls with radius
		$r_1:=\phi(n)^{\frac{1}{4d}}$. The collection of these balls is denoted by
		$$
		\left\{B\left(\mathbf{x}_i, r_1\right): 1 \leq i \leq r_0^d \cdot \phi(n)^{-\frac{1}{4}}\right\}.
		$$
		Next, define $u_m=\max _{1 \leq i \leq d}\left(pr_1 / r_{m, i}\right)$ and $u_n=\max _{1 \leq i \leq d}\left(pr_1 / r_{n, i}\right)$. For each ball $B\left(\mathbf{x}_i, r_1\right)$, $B\left(\mathbf{x}_i, r_1\right) \subset f^{-1} B\left(f\left(\mathbf{x}_i\right), p r_1\right)$, and by Lemma \ref{l35}, we have
		$$
		\begin{aligned}
			&B\left(\mathbf{x}_i, r_1\right) \cap \hat{E}_m \cap \hat{E}_n \\ & \subset B\left(\mathbf{x}_i, r_1\right) \cap T^{-m} R\left(f(\mathbf{x}_i), \xi_m\left(\mathbf{x}_i\right)+2 u_m \mathbf{r}_m\right) \cap T^{-n} R\left(f(\mathbf{x}_i), \xi_n\left(\mathbf{x}_i\right)+2 u_n \mathbf{r}_n\right).
		\end{aligned}
		$$
		Hence,
		$$
		B \cap \hat{E}_m \cap \hat{E}_n \subset \bigcup_{i \leq r_0^d \cdot \phi(n)^{-\frac{1}{4}}} G_i \cap T^{-n} R\left(f(\mathbf{x}_i), \xi_n\left(\mathbf{x}_i\right)+2 u_n \mathbf{r}_n\right),
		$$
		where
		$$
		G_i=B\left(\mathbf{x}_i, r_1\right) \cap T^{-m} R\left(f(\mathbf{x}_i), \xi_m\left(\mathbf{x}_i\right)+2 u_m \mathbf{r}_m\right).
		$$
		By Lemmas \ref{l36} and \ref{l310}, we have
		$$
		\begin{aligned}
			\mu\left(B \cap \hat{E}_m \cap \hat{E}_n\right) & \leq \sum_{i \leq r_0^d \cdot \phi(n)^{-\frac{1}{4}}} \mu\left(G_i \cap T^{-n} R\left(f(\mathbf{x}_i), \xi_n\left(\mathbf{x}_i\right)+2 u_n \mathbf{r}_n\right)\right) \\
			& \leq \sum_{i \leq r_0^d \cdot \phi(n)^{-\frac{1}{4}}}\left(\mu\left(G_i\right)+c_3 \phi(n)^{\frac{1}{2}}\right) \mu\left(R\left(f(\mathbf{x}_i), \xi_n\left(\mathbf{x}_i\right)+2 u_n \mathbf{r}_n\right)\right) \\
			& \leq \sum_{i \leq r_0^d \cdot \phi(n)^{-\frac{1}{4}}}\left(\mu\left(G_i\right)+c_3 \phi(n)^{\frac{1}{2}}\right)\left(r_{n, 1} \ldots r_{n, d}+c_1\left(2 u_n\right)^s\right) \\
			& =\left(\sum_{i \leq r_0^d \cdot \phi(n)^{-\frac{1}{4}}} \mu\left(G_i\right)+c_3 r_0^d \phi(n)^{\frac{1}{4}}\right)\left(r_{n, 1} \ldots r_{n, d}+c_1\left(2 u_n\right)^s\right).
		\end{aligned}
		$$
		Next, we estimate $\sum_{i \leq r_0^d \cdot \phi(n)^{-\frac{1}{4}}} \mu\left(G_i\right)$. Partition $B$ into $r_0^d \phi(m)^{-\frac{1}{2}}$ balls with radius $r_2:=\phi(m)^{\frac{1}{2d}}$. Denote the collection of these balls by
		$$\left\{B\left(\mathbf{z}_j, r_2\right): 1 \leq j \leq r_0^d \phi(m)^{-\frac{1}{2}}\right\}.$$
		Define $v_m=\max _{1 \leq i \leq d}\left(pr_2 / r_{m, i}\right).$ By the condition of $m < \phi_1^{-1}\left(p^{-2 d} n^{-4d(d/s+1)}\right)$, then $v_m \geq u_m$. For any $\mathbf{x} \in$ $B\left(\mathbf{z}_j, r_2\right) \cap G_i$, we have
		$$
		T^m \mathbf{x} \in R\left(f(\mathbf{x}_i), \xi_m\left(\mathbf{x}_i\right)+2 u_m \mathbf{r}_m\right) \quad \text { and } \quad \mathbf{x}_i \in B\left(\mathbf{z}_j, r_1+r_2\right).
		$$
		By the $p$-Lipschitz condition,
		$$
		f(\mathbf{x}_i) \in B\left(f(\mathbf{z}_j), pr_1+pr_2\right)\subset R\left(f(\mathbf{z}_j),\left(u_m+v_m\right) \mathbf{r}_m\right).
		$$
		Hence,
		$$
		\begin{aligned}
			T^m \mathbf{x} \in R\left(f(\mathbf{x}_i), \xi_m\left(\mathbf{x}_i\right)+2 u_m \mathbf{r}_m\right) & \subset R\left(f(\mathbf{z}_j), \xi_m\left(\mathbf{x}_i\right)+3 u_m \mathbf{r}_m+v_m \mathbf{r}_m\right) \\
			& \subset R\left(f(\mathbf{z}_j), \xi_m\left(\mathbf{x}_i\right)+4 v_m \mathbf{r}_m\right).
		\end{aligned}
		$$
		By the triangle inequality, 
		$$
		R\left(f(\mathbf{z}_j), \xi_m\left(\mathbf{x}_i\right)-2 v_m \mathbf{r}_m\right) \subset R\left(f(\mathbf{z}_j), \xi_m\left(\mathbf{x}_i\right)-\left(u_m+v_m\right) \mathbf{r}_m\right) \subset R\left(f(\mathbf{x}_i), \xi_m\left(\mathbf{x}_i\right)\right).
		$$
		Therefore,
		$$
		\mu\left(R\left(f(\mathbf{z}_j), \xi_m\left(\mathbf{x}_i\right)-2 v_m \mathbf{r}_m\right)\right) \leq \mu\left(R\left(f(\mathbf{x}_i), \xi_m\left(\mathbf{x}_i\right)\right)\right)=r_{m, 1} \cdots r_{m, d},
		$$
		which indicates that
		$$
		l_m\left(\mathbf{z}_j\right) \geq l_m\left(\mathbf{x}_i\right)-2 v_m.
		$$
		Using this, we can obtain that
		$$
		T^m \mathbf{x} \in R\left(f(\mathbf{z}_j), \xi_m\left(\mathbf{x}_i\right)+4 v_m \mathbf{r}_m\right) \subset R\left(f(\mathbf{z}_j), \xi_m\left(\mathbf{z}_j\right)+6 v_m \mathbf{r}_m\right)
		,$$
		and further 
		$$
		\begin{aligned}
			&	B\left(\mathbf{z}_j, r_2\right) \cap B\left(\mathbf{x}_i, r_1\right) \cap T^{-m} R\left(f(\mathbf{x}_i), \xi_m\left(\mathbf{x}_i\right)+2 u_m \mathbf{r}_m\right)  \\
			& \subset B\left(\mathbf{z}_j, r_2\right) \cap T^{-m} R\left(f(\mathbf{z}_j), \xi_m\left(\mathbf{z}_j\right)+6 v_m \mathbf{r}_m\right).
		\end{aligned}
		$$
		Therefore, 
		$$
		\begin{aligned}
			& \bigcup_{i \leq r_0^d \cdot \phi(n)^{-\frac{1}{4}}} B\left(\mathbf{x}_i, r_1\right) \cap T^{-m} R\left(f\left(\mathbf{x}_i\right), \xi_m\left(\mathbf{x}_i\right)+2 u_m \mathbf{r}_m\right) \\
			& \quad \subset \bigcup_{i \leq r_0^d \cdot \phi(m)^{-\frac{1}{2}}}
			B\left(\mathbf{z}_j, r_2\right) \cap T^{-m} R\left(f\left(\mathbf{z}_j\right), \xi_m\left(\mathbf{z}_j\right)+6 v_m \mathbf{r}_m\right).
		\end{aligned}
		$$
		By the inclusion above and noting that the set is a disjoint union, we have
		$$
		\begin{aligned}
			& \sum_{i \leq r_0^d \cdot \phi(n)^{-\frac{1}{4}}} \mu\left(B\left(\mathbf{x}_i, r_1\right) \cap T^{-m} R\left(f(\mathbf{x}_i), \xi_m\left(\mathbf{x}_i\right)+2 u_m \mathbf{r}_m\right)\right) \\
			= & \mu\left(\bigcup_{i \leq r_0^d \cdot \phi(n)^{-\frac{1}{4}}} B\left(\mathbf{x}_i, r_1\right) \cap T^{-m} R\left(f(\mathbf{x}_i), \xi_m\left(\mathbf{x}_i\right)+2 u_m \mathbf{r}_m\right)\right) \\
			\leq & \mu\left(\bigcup_{j \leq r_0^d \cdot \phi(m)^{-\frac{1}{2}}} B\left(\mathbf{z}_j, r_2\right) \cap T^{-m} R\left(f(\mathbf{z}_j), \xi_m\left(\mathbf{z}_j\right)+6 v_m \mathbf{r}_m\right)\right) \\
			\leq & \sum_{j \leq r_0^d \cdot \phi(m)^{-\frac{1}{2}}} \mu\left(B\left(\mathbf{z}_j, r_2\right) \cap T^{-m} R\left(f(\mathbf{z}_j), \xi_m\left(\mathbf{z}_j\right)+6 v_m \mathbf{r}_m\right)\right).
		\end{aligned} 
		$$
		By Lemma \ref{l36}, 
		$$
		\begin{aligned}
			& \sum_{j \leq  r_0^d \cdot \phi(m)^{-\frac{1}{2}}} \mu\left(B\left(\mathbf{z}_j, r_2\right) \cap T^{-m} R\left(f(\mathbf{z}_j), \xi_m\left(\mathbf{z}_j\right)+6 v_m \mathbf{r}_m\right)\right) \\
			\leq & \sum_{j \leq r_0^d \cdot \phi(m)^{-\frac{1}{2}}}\left(\mu\left(B\left(\mathbf{z}_j, r_2\right)\right)+\phi(m)\right) \mu\left(R\left(f(\mathbf{z}_j), \xi_m\left(\mathbf{z}_j\right)+6 v_m \mathbf{r}_m\right)\right) \\
			\leq & \sum_{j \leq r_0^d \cdot \phi(m)^{-\frac{1}{2}}}\left(\mu\left(B\left(\mathbf{z}_j, r_2\right)\right)+\phi(m)\right)\left(r_{m, 1} \cdots r_{m, d}+c_1\left(6 v_m\right)^s\right) \\
			= & \left(\mu(B)+r_0^d \phi(m)^{\frac{1}{2}}\right)\left(r_{m, 1} \cdots r_{m, d}+c_1\left(6 v_m\right)^s\right).
		\end{aligned}
		$$
		Thus, combining the above estimations, we have
		$$
		\begin{aligned}
			\mu\left(B \cap \hat{E}_m \cap \hat{E}_n\right) \leq & \left(\left(\mu(B)+r_0^d \phi(m)^{\frac{1}{2}}\right)\left(r_{m, 1} \cdots r_{m, d}+c_1\left(6 v_m\right)^s\right)+c_3 r_0^d \phi(n)^{\frac{1}{4}}\right) \\
			& \cdot\left(r_{n, 1} \ldots r_{n, d}+c_1\left(2 u_n\right)^s\right).
		\end{aligned}
		$$
		Recall that $r_{m, i}>m^{-2}$ and $r_{n, i}>n^{-2}$. It follows from $r_1=\phi(n)^{\frac{1}{4 d}}$ and $r_2=\phi(m)^{\frac{1}{2 d}}$ that 
		$$
		v_m=\max _{1 \leq i \leq d} \frac{pr_2}{r_{m, i}} \leq pm^{2} \phi(m)^{\frac{1}{2 d}} \quad \text { and } \quad u_n=\max _{1 \leq i \leq d} \frac{pr_1}{r_{n, i}} \leq pn^{2} \phi(n)^{\frac{1}{4 d}}.
		$$
		Hence, there exists a constant $c_4$ such that
		$$
		\mu\left(B \cap \hat{E}_m \cap \hat{E}_n\right) \leq c_4\left(\mu(B)+\phi(m)^{\frac{1}{2}}\right) \cdot r_{m, 1} \cdots r_{m, d} \cdot r_{n, 1} \cdots r_{n, d}.
		$$
	\end{proof}

	\begin{lemma} [{\cite[Lemma 2.12]{7}}] \label{l312} Let $\nu$ be a Borel probability measure on $\mathbb{R}^d$ and let $E$ be a measurable subset of $\mathbb{R}^d$. Suppose there exist constants $r_0$ and $\alpha>0$ such that for any ball $B:=B(\mathbf{x}, r)$ with $r<r_0$, we have $\nu(B \cap E) \geq \alpha \nu(B)$. Then,
		$$
		\nu(E)=1 .
		$$
	\end{lemma}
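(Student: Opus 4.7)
The plan is to proceed by contradiction using a Lebesgue density argument for Radon measures. Suppose towards a contradiction that $\nu(E)<1$, so that the complement $F:=\mathbb{R}^d\setminus E$ satisfies $\nu(F)>0$. Since $\nu$ is a Borel probability measure on $\mathbb{R}^d$, it is in particular a finite Radon measure, so the Besicovitch differentiation theorem applies: for any Borel set $F$, the limit
\[
\lim_{r\to 0^+}\frac{\nu(B(\mathbf{x},r)\cap F)}{\nu(B(\mathbf{x},r))}
\]
exists and equals $\mathbf{1}_F(\mathbf{x})$ for $\nu$-a.e. $\mathbf{x}\in\mathbb{R}^d$. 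Here $B(\mathbf{x},r)$ is the maximum-norm ball (a hypercube), but since these balls form a Vitali-type differentiation basis for every Radon measure on $\mathbb{R}^d$, the conclusion is unaffected.

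Since $\nu(F)>0$, the set of $\nu$-density points of $F$ lying in $F$ has positive $\nu$-measure and is therefore non-empty. Pick any such point $\mathbf{x}_0\in F$, so that
\[
\lim_{r\to 0^+}\frac{\nu(B(\mathbf{x}_0,r)\cap F)}{\nu(B(\mathbf{x}_0,r))}=1.
\]
In particular, there exists $r_1\in(0,r_0)$ such that for every $0<r<r_1$,
\[
\frac{\nu(B(\mathbf{x}_0,r)\cap F)}{\nu(B(\mathbf{x}_0,r))}>1-\alpha,
\qquad\text{equivalently}\qquad
\nu(B(\mathbf{x}_0,r)\cap E)<\alpha\,\nu(B(\mathbf{x}_0,r)).
\]
This contradicts the hypothesis that $\nu(B(\mathbf{x}_0,r)\cap E)\geq\alpha\,\nu(B(\mathbf{x}_0,r))$ for every ball of radius less than $r_0$. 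Hence $\nu(E^c)=0$, i.e.\ $\nu(E)=1$.

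The only non-routine point is verifying that the differentiation theorem is applicable to $\nu$ with the maximum-norm balls used throughout the paper. This follows from the fact that cubes and Euclidean balls are comparable up to bounded multiplicative constants, so any Radon measure on $\mathbb{R}^d$ admits the Besicovitch (hence Lebesgue) differentiation theorem with respect to either basis. All other steps are purely formal and require no estimates on $\nu$ beyond its being a finite Borel measure.
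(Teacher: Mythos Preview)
Your argument is correct and is precisely the approach the paper indicates: the paper does not spell out a proof of this lemma but cites \cite[Lemma~2.12]{7} and remarks that it rests on the Lebesgue density theorem for Borel probability measures on $\mathbb{R}^d$; moreover, the paper carries out exactly your contradiction-via-density-point argument explicitly in the proof of the divergence part of Theorem~\ref{13} (invoking \cite[Corollary~2.14]{29}). So your proposal matches the intended method in substance and detail.
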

	They proved that the Lebesgue density theorem holds for all Borel probability measures on $\mathbb{R}^d$.
	
	\begin{lemma}[Paley-Zygmund Inequality] \label{l313} Let $(X, \mathcal{B}, \mu)$ be a probability space and $Z$ be a non-negative random variable. Then for any $0<\lambda<1$,  
		$$
		\mu(\{x \in X: Z(x)>\lambda \mathbb{E}(Z)\}) \geq(1-\lambda)^2 \frac{\mathbb{E}(Z)^2}{\mathbb{E}\left(Z^2\right)}.
		$$
	\end{lemma}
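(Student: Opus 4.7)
The plan is to derive the inequality by splitting $\mathbb{E}(Z)$ according to whether $Z$ exceeds the threshold $\lambda \mathbb{E}(Z)$, and then controlling the contribution from the favorable event by Cauchy--Schwarz. Concretely, writing $A = \{x \in X : Z(x) > \lambda \mathbb{E}(Z)\}$, I would first decompose
$$
\mathbb{E}(Z) = \mathbb{E}(Z \cdot \mathbf{1}_{A^c}) + \mathbb{E}(Z \cdot \mathbf{1}_{A})
\leq \lambda \mathbb{E}(Z) + \mathbb{E}(Z \cdot \mathbf{1}_{A}),
$$
where the first term is bounded using the defining inequality $Z \leq \lambda \mathbb{E}(Z)$ on $A^c$, together with non-negativity of $Z$. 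Rearranging yields the key lower bound $\mathbb{E}(Z \cdot \mathbf{1}_A) \geq (1-\lambda)\mathbb{E}(Z)$.

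Next, I would apply the Cauchy--Schwarz inequality to the left-hand side to extract $\mu(A)$:
$$
\mathbb{E}(Z \cdot \mathbf{1}_A) \leq \bigl(\mathbb{E}(Z^2)\bigr)^{1/2} \bigl(\mathbb{E}(\mathbf{1}_A^2)\bigr)^{1/2} = \bigl(\mathbb{E}(Z^2)\bigr)^{1/2} \mu(A)^{1/2}.
$$
Combining the two bounds gives $(1-\lambda)\mathbb{E}(Z) \leq \bigl(\mathbb{E}(Z^2)\bigr)^{1/2} \mu(A)^{1/2}$; squaring and solving for $\mu(A)$ produces the stated inequality. If $\mathbb{E}(Z^2) = \infty$ the right-hand side is interpreted as zero and the statement is trivial; if $\mathbb{E}(Z) = 0$ then again both sides vanish (or one reads $0/0$ as $0$ by convention), so we may implicitly assume $0 < \mathbb{E}(Z) < \infty$ and $\mathbb{E}(Z^2) < \infty$ in the main argument.

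This is a short classical argument with no real obstacles; the only subtlety is the convention on the trivial cases, and one must be careful that the splitting step uses non-negativity of $Z$ rather than integrating a potentially signed variable. No mixing or dynamical input is needed here, since the lemma is a purely probabilistic tool that will later be applied to $Z = \sum_{n=N}^{M} \mathbf{1}_{\hat{E}_n}$ restricted to a ball $B$ in order to produce a uniform lower density bound and then invoke Lemma~\ref{l312}.
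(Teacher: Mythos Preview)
Your argument is correct and is exactly the standard proof of the Paley--Zygmund inequality. The paper itself does not prove this lemma at all --- it is merely stated as a classical tool and immediately used in the proof of Lemma~\ref{l314} --- so there is nothing to compare against.
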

	
	\begin{lemma} \label{l314} There exists a constant $\alpha_1>0$ such that for any ball $B \subset[0,1]^d$, 
		$$
		\mu\left(B \cap \hat{\mathcal{R}}^f\left(\left\{\mathbf{r}_n\right\}\right)\right) \geq \alpha_1 \mu(B).
		$$
		In particular,  $\mu\left(\hat{\mathcal{R}}^f\left(\left\{\mathbf{r}_n\right\}\right)\right)=1$.
	\end{lemma}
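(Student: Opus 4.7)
The plan is to apply the Paley--Zygmund inequality (Lemma \ref{l313}) to a truncated counting function on the ball $B$, and then upgrade the resulting lower bound to full $\mu$-measure via Lemma \ref{l312}. Fix $B \subset [0,1]^d$ and work on $(B, \mu_B)$ with $\mu_B := \mu|_B/\mu(B)$. Choose $N_0$ large enough that Lemma \ref{l37} applies for all $n \geq N_0$, and define
$$Z_N := \sum_{n=N_0}^{N} \chi_{\hat{E}_n}, \qquad S_N := \sum_{n=N_0}^{N} r_{n,1}\cdots r_{n,d}.$$
By Lemma \ref{l37}, $\mathbb{E}_{\mu_B}[Z_N] \asymp S_N$, and $S_N \to \infty$ by the divergence hypothesis.

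For the second moment, I would expand
$$\mathbb{E}_{\mu_B}[Z_N^2] = \mathbb{E}_{\mu_B}[Z_N] + \frac{2}{\mu(B)} \sum_{N_0 \leq m<n \leq N} \mu(B \cap \hat{E}_m \cap \hat{E}_n)$$
and split each cross-term according to whether $m \geq \phi_1^{-1}(p^{-2d}n^{-4d(d/s+1)})$ (the ``large-$m$'' regime) or not. In the large-$m$ regime, Lemma \ref{l39} bounds the sum by $c_2 \sum_{m<n}(r_{m,1}\cdots r_{m,d} + \phi(n-m))\,r_{n,1}\cdots r_{n,d} \leq c_2 S_N^2 + CS_N$, using exponential decay to conclude $\sum_{k \geq 1} \phi(k) < \infty$. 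In the small-$m$ regime, Lemma \ref{l311} provides
$$\mu(B \cap \hat{E}_m \cap \hat{E}_n) \leq c_4\bigl(\mu(B)+\phi(m)^{1/2}\bigr)\,r_{m,1}\cdots r_{m,d}\,r_{n,1}\cdots r_{n,d};$$
the $\mu(B)$-piece contributes a further $c_4 S_N^2$ term (after cancellation with the $1/\mu(B)$ prefactor), while the $\phi(m)^{1/2}$-piece contributes at most a constant multiple of $\mu(B)^{-1} S_N$, since $\sum_m \phi(m)^{1/2} < \infty$ by exponential mixing. Collecting,
$$\mathbb{E}_{\mu_B}[Z_N^2] \leq C_1 S_N^2 + C_2(B) S_N,$$
with $C_1$ absolute and $C_2(B)$ depending on $\mu(B)$ but independent of $N$.

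Since $S_N \to \infty$, $\liminf_N (\mathbb{E}_{\mu_B}[Z_N])^2 / \mathbb{E}_{\mu_B}[Z_N^2] \geq c_0$ for an absolute constant $c_0 > 0$. Applying Lemma \ref{l313} with $\lambda = 1/2$ then yields
$$\mu_B\bigl(\bigl\{Z_N > \tfrac{1}{2}\mathbb{E}_{\mu_B}[Z_N]\bigr\}\bigr) \geq \alpha_1$$
for an absolute $\alpha_1 > 0$ and all sufficiently large $N$. Because $\mathbb{E}_{\mu_B}[Z_N] \to \infty$, every point in $\limsup_N \bigl\{Z_N > \tfrac{1}{2}\mathbb{E}_{\mu_B}[Z_N]\bigr\}$ must lie in infinitely many $\hat{E}_n$; Fatou's lemma then gives $\mu(B \cap \hat{\mathcal{R}}^f(\{\mathbf{r}_n\})) \geq \alpha_1 \mu(B)$. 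Finally, Lemma \ref{l312} applied with $\nu = \mu$ and $E = \hat{\mathcal{R}}^f(\{\mathbf{r}_n\})$ upgrades this ball-wise lower bound to $\mu(\hat{\mathcal{R}}^f(\{\mathbf{r}_n\})) = 1$. The principal subtlety is verifying that $\alpha_1$ is uniform in $B$: although $C_2(B)$ blows up as $\mu(B) \to 0$, it only multiplies the lower-order $S_N$ term, so that $C_2(B) S_N / S_N^2 \to 0$ as $N \to \infty$, leaving a uniform absolute ratio of moments and hence an absolute $\alpha_1$.
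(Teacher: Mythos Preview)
Your proposal is correct and follows essentially the same route as the paper: bound the first and second moments of the counting function via Lemmas \ref{l37}, \ref{l39}, \ref{l311}, apply Paley--Zygmund (Lemma \ref{l313}), and upgrade via Lemma \ref{l312}. The only cosmetic difference is how the $B$-dependence is handled: the paper zeroes out finitely many initial $\mathbf{r}_n$ (with the cutoff depending on $B$) so that the ``In particular'' clause of Lemma \ref{l311} applies and every cross-term carries a clean $\mu(B)$ factor, whereas you retain the $\phi(m)^{1/2}$ contribution and absorb it into a $B$-dependent lower-order coefficient $C_2(B)$ that washes out in the limit $S_N\to\infty$; both devices yield the same absolute $\alpha_1$.
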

	
	\begin{proof}
		Fix a ball $B \subset[0,1]^d$ with $\mu(B)>0$. We can replace at most finitely many $\mathbf{r}_n$ 's by $\mathbf{0}$, so that the estimations given in Lemmas \ref{l37}, \ref{l39} and \ref{l311} hold for all $n \in \mathbb{N}$. This manipulation does not affect the property $\sum_{n \geq 1} r_{n, 1} \cdots r_{n, d}=\infty$, and the resulting set is smaller than the original one.
		
		Let $N \in \mathbb{N}$, define $Z_N(x)=\sum_{n=1}^N \chi_{B \cap \hat{E}_n}(x)$. By Lemma \ref{l37}, one has
		$$
		\frac{1}{2} \mu(B) \sum_{n=1}^N r_{n, 1} \cdots r_{n, d} \leq \mathbb{E}\left(Z_N\right)=\sum_{n=1}^N \mu\left(B \cap \hat{E}_n\right) \leq 2 \mu(B) \sum_{n=1}^N r_{n, 1} \cdots r_{n, d}.
		$$
		On the other hand,
		$$
		\begin{aligned}
			\mathbb{E}\left(Z_N^2\right) & =\sum_{m, n=1}^N \mu\left(B \cap \hat{E}_m \cap \hat{E}_n\right) 
			=2 \sum_{n=1}^N \sum_{m=1}^{n-1} \mu\left(B \cap \hat{E}_m \cap \hat{E}_n\right)+\sum_{n=1}^N \mu\left(B \cap \hat{E}_n\right).
		\end{aligned}
		$$
		Using the estimations from Lemmas \ref{l39} and \ref{l311}, we obtain
		$$
		\begin{aligned}
			&\sum_{n=1}^N \sum_{m=1}^{n-1} \mu\left(B \cap \hat{E}_m \cap \hat{E}_n\right) \\ & =  \sum_{n=1}^N \sum_{m=\left\lceil\phi_1^{-1}\left(p^{-2 d} n^{-4d(d/s+1)}\right)\right\rceil}^{n-1} \mu\left(B \cap \hat{E}_m \cap \hat{E}_n\right)\\
			& +\sum_{n=1}^N \sum_{m=1}^{\left\lfloor\phi_1^{-1}\left(p^{-2 d} n^{-4d(d/s+1)}\right)\right\rfloor} \mu\left(B \cap \hat{E}_m \cap \hat{E}_n\right) \\
			\leq & \sum_{n=1}^N \sum_{m=\left\lceil\phi_1^{-1}\left(p^{-2 d} n^{-4d(d/s+1)}\right)\right\rceil}^{n-1} c_2 \mu(B) \cdot r_{n, 1} \cdots r_{n, d} \cdot\left(r_{m, 1} \cdots r_{m, d}+\phi(n-m)\right) \\
			& +\sum_{n=1}^N \sum_{m=1}^{\left\lfloor\phi_1^{-1}\left(p^{-2 d} n^{-4d(d/s+1)}\right)\right\rfloor} 2 c_4 \mu(B) \cdot r_{m, 1} \cdots r_{m, d} \cdot r_{n, 1} \cdots r_{n, d} ,
		\end{aligned}
		$$
		where $\lceil\cdot\rceil$ and $\lfloor\cdot\rfloor$  denote the ceiling and floor function, respectively.
		Hence, there exist $c_5$ and $c_6$ such that
		$$
		\begin{aligned}
			& \sum_{n=1}^N \sum_{m=1}^{n-1} \mu\left(B \cap \hat{E}_m \cap \hat{E}_n\right) \\
			\leq & c_5 \mu(B) \sum_{n=1}^N \sum_{m=1}^{n-1} r_{m, 1} \cdots r_{m, d} \cdot r_{n, 1} \cdots r_{n, d}+c_6 \mu(B) \sum_{n=1}^N r_{n, 1} \cdots r_{n, d}.
		\end{aligned}
		$$
		Substituting the estimations for $\mathbb{E}\left(Z_N\right)$ and $\mathbb{E}\left(Z_N^2\right)$ derived above, we have
		$$
		\begin{aligned}
			&	\mu\left(Z_N>\lambda \mathbb{E}\left(Z_N\right)\right)  \geq(1-\lambda)^2 \frac{\mathbb{E}\left(Z_N\right)^2}{\mathbb{E}\left(Z_N^2\right)} \\
			& \geq(1-\lambda) \frac{\left(\frac{\mu(B)}{2} \sum_{n=1}^N r_{n, 1} \cdots r_{n, d}\right)^2}{2 c_5 \mu(B)\left(\sum_{n=1}^N r_{n, 1} \cdots r_{n, d}\right)^2+\left(2+2 c_6\right)\mu(B) \sum_{n=1}^N r_{n, 1} \cdots r_{n, d}}.
		\end{aligned}
		$$
		Taking the limit as $N \rightarrow \infty$, we get
		$$
		\begin{aligned}
			\mu\left(B \cap \hat{\mathcal{R}}^f\left(\left\{\mathbf{r}_n\right\}\right)\right) & =\mu\left(\lim \sup \left(B \cap \hat{E}_n\right)\right) \geq \mu\left(\lim \sup \left(Z_N>\lambda \mathbb{E}\left(Z_N\right)\right)\right) \\
			& \geq \lim \sup \mu\left(Z_N>\lambda \mathbb{E}\left(Z_N\right)\right)\geq(1-\lambda) \frac{\mu(B)}{8 c_5}.
		\end{aligned}
		$$
		Let $\lambda=1 / 2$ and $\alpha_1=\left(16 c_5\right)^{-1}$. This completes the proof of the first part of the lemma. The second part follows immediately from Lemma \ref{l312}.
	\end{proof}
	
	Using Lemmas \ref{l34} and \ref{l314}, we conclude that $$
	\mu\left(\mathcal{R}^f\left(\left\{\mathbf{r}_n\right\}\right)\right)=1.
	$$
	
	Note that for the divergent part, the boundedness condition on the sequence $\left\{\frac{\left|\mathbf{r}_n\right|}{\left|\mathbf{r}_n\right|_{\text {min }}}\right\}$ is not invoked.
	
	\section{Proof of Theorem \ref{13} } 
	Recall that, as in the proof of Theorem \ref{12}, we will focus on proving Theorem \ref{13} under Condition \ref{131} and specify where the Definition \ref{ab} on $f$ is applied. And if the Condition \ref{131} is strengthened to Condition \ref{132}, the proof also incorporates a detailed discussion of the case where the restriction of Definition \ref{ab} on $f$ is removed. 
	
	Recall for any $\mathbf{x} \in[0,1]^d$ and $\delta>0$, $H(f(\mathbf{x}), \delta)$ is defined by
	$$
	H(f(\mathbf{x}), \delta)=f(\mathbf{x})+\left\{\mathbf{z} \in[-1,1]^d: \left|z_1 \cdots z_d\right|<\delta \right\}.
	$$
	Let $\left\{\delta_n\right\}$ be the sequence defined as in Theorem \ref{13}. Then, $\mathcal{R}^{f \times}\left(\delta_n\right)=\lim \sup D_n$, 
	where
	$$
	D_n:=\left\{\mathbf{x} \in[0,1]^d: T^n \mathbf{x} \in H\left(f(\mathbf{x}), \delta_n\right)\right\}.
	$$
	
	$\partial H(F(\mathbf{x}), \delta)$ can be divided into two parts. One part is from the boundary of $[-1,1]^d$, and the other part comes from the hypersurface $\left|z_1 \cdots z_d\right|=\delta$. The set $\left\{\mathbf{z} \in[-1,1]^d:\left|z_1 \cdots z_d\right|=\delta\right\}$ is an $(d-1)$-dimensional hypersurface, defined by a scalar equation, and thus is a subset in $[-1,1]^{d-1}$.
	So we conclude that for any $\mathbf{x} \in[0,1]^d$, there exists a constant $K_1$ such that
	$$
	\sup _{0 \leq \delta \leq 1} M^{*(d-1)}(\partial H(f(\mathbf{x}), \delta)) \leq K_1.
	$$
	
	\begin{lemma} [{\cite[Lemma 4]{16}}] \label{241} Given $d \in \mathbb{N}$ and $\delta>0$, let
		$$
		H_d(\delta):=\left\{\left(x_1, \ldots, x_d\right) \in[0,1)^d:\left\|x_1\right\| \ldots\left\|x_d\right\|<\delta\right\}.
		$$
		Then,
		$$
		m_d\left(H_d(\delta)\right)= \begin{cases}1 & \text { if } \delta \geq 2^{-d}, \\ 2^d \delta\left(\sum_{t=0}^{d-1} \frac{1}{t!}\left(\log \frac{1}{2^d \delta}\right)^t\right) & \text { if } \delta<2^{-d}.\end{cases}
		$$
		Here, $\|\cdot\|$ denotes the distance to the nearest integer.
	\end{lemma}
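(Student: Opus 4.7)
The plan is to first handle the trivial case and then reduce the problem to computing the volume of a standard hyperboloid on the unit cube, which admits a clean inductive evaluation. Since $\|x\| \leq 1/2$ for every $x \in [0,1)$, the product $\|x_1\| \cdots \|x_d\|$ never exceeds $2^{-d}$; hence when $\delta \geq 2^{-d}$ the defining inequality is automatic and $m_d(H_d(\delta)) = 1$, giving the first clause of the formula.

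For $\delta < 2^{-d}$, I would exploit the symmetry $\|x\| = \|1-x\|$ around $x = 1/2$ to fold each coordinate onto $[0, 1/2]$, and then rescale by $u_i = 2 y_i$ with $y_i := \|x_i\|$. The factor $2^d$ from the (coordinate-wise) $2$-to-$1$ folding and the Jacobian $2^{-d}$ from the rescaling cancel, so the computation reduces to evaluating
\[
V_d(\eta) := m_d\bigl(\{ u \in [0,1]^d : u_1 \cdots u_d < \eta \}\bigr), \qquad \eta := 2^d \delta \in (0,1).
\]
It then suffices to prove the identity
\[
V_d(\eta) = \eta \sum_{t=0}^{d-1} \frac{(\log(1/\eta))^t}{t!},
\]
which recovers the claimed expression upon substituting $\eta = 2^d \delta$.

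I would establish this identity by induction on $d$. The base case $d=1$ is immediate from $V_1(\eta) = \eta$. For the inductive step, Fubini and the split $\int_0^1 du_1 = \int_0^\eta du_1 + \int_\eta^1 du_1$ yield
\[
V_d(\eta) = \eta + \int_\eta^1 V_{d-1}(\eta/u_1) \, du_1,
\]
since the cross-section is all of $[0,1]^{d-1}$ when $u_1 \leq \eta$ and has measure $V_{d-1}(\eta/u_1)$ otherwise. Plugging in the inductive formula for $V_{d-1}$ and substituting $v = \log(u_1/\eta)$ reduces each term of the resulting sum to $\int_0^{\log(1/\eta)} v^t \, dv = (\log(1/\eta))^{t+1}/(t+1)$; reindexing $s = t+1$ and combining with the leading $\eta$ produces exactly the asserted sum for $V_d(\eta)$.

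The only real obstacle is bookkeeping: keeping the factors of $2$ introduced by the folding and rescaling consistent, and making sure the slab contribution $u_1 \leq \eta$ (the leading $\eta$ term) is not double-counted when it is reassembled with the $t \geq 1$ terms coming from the inductive hypothesis. No deeper analytic input is required beyond Fubini's theorem and a one-dimensional change of variables.
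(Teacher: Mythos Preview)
Your proof is correct. The reduction via the $2$-to-$1$ folding and rescaling to the auxiliary volume $V_d(\eta)$ with $\eta = 2^d\delta$ is clean, and the induction on $d$ with the Fubini split at $u_1=\eta$ and the substitution $v=\log(u_1/\eta)$ goes through exactly as you describe.

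Note, however, that the paper does not supply its own proof of this lemma: it is quoted verbatim from \cite[Lemma~4]{16} and invoked as a black box to derive the estimates \eqref{eq:4.2}--\eqref{eq:4.4}. So there is no in-paper argument to compare against; your inductive computation is precisely the standard proof one would expect for this volume formula.
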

	
	We adopt several operations from \cite{7}, which are essential for our estimations. These are summarized below and directly applied in our estimations.
	
	By Lemma \ref{241}, for any $0<\delta<r<1$, we have 
	\begin{equation}
		m_d(B(\mathbf{x}, r) \cap H(\mathbf{x}, \delta))=2^d \delta\left(\sum_{t=0}^{d-1} \frac{1}{t!}\left(\log \frac{r^d}{\delta}\right)^t\right).
	\end{equation}
	Next, for any $\delta>0$, it holds that
	\begin{equation} \label{eq:4.2}
		m_d(H(\mathbf{x}, \delta)) \leq d 2^d \delta(-\log \delta)^{d-1}.
	\end{equation}
	For the case that $r^d>\sqrt{\delta}$, we have
	\begin{equation} \label{eq:4.3}
		m_d(B(\mathbf{x}, r) \cap H(\mathbf{x}, \delta)) \geq \frac{2}{(d-1)!} \delta(-\log \delta)^{d-1}.
	\end{equation}
	In particular, if $\delta=\delta_n+c_0 n^{-3}$ for some $c_0 \leq  2^{d+2} p^d$, then for sufficiently large $n$ with $\delta_n>n^{-2}$, 
	\begin{equation} \label{eq:4.4}
		m_d\left(H\left(\mathbf{x}, \delta_n+c_0 n^{-3}\right)\right) \leq d 2^{d+1} \delta_n\left(-\log \delta_n\right)^{d-1}.
	\end{equation}
	
	In this section, we fix the collection $\mathcal{C}_2$ of subsets $E \subset[0,1]^d$ satisfying the bounded property
	$$
	(\mathbf{P 2}): \quad \sup _{E \in \mathcal{C}_2} M^{*(d-1)}(\partial E)<2 dc_M+K_1+\frac{Kc_M}{1-L^{-(d-1)}},
	$$
	where $K_1$ is as defined above, $L$ and $K$ are given in Definition \ref{pem}, and $c_M$ is a constant defined before Definition \ref{pem}. It is evident that all hyperrectangles $R \subset$ $[0,1]^d$ and all hyperboloids $H \subset$ $[0,1]^d$ satisfy the bounded property $(\mathbf{P 2})$.
	
	As in the proof of Lemma \ref{JBXZ1}, we will frequently rely on the following lemma.
	
	\begin{lemma} \label{l42}
		Let $B(f(\mathbf{x}), p r) \subset[0,1]^d$ be a ball centered at $f(\mathbf{x}) \in[0,1]^d$ and radius $pr>0$. Then, for any subset $A$ of $f^{-1} B(f(\mathbf{x}), p r)$, 
		$$
		A \cap D_n \subset A \cap T^{-n} H\left(f(\mathbf{x}), \delta_n+(2 p)^d r\right).
		$$
		Furthermore, if  $(2 p)^d r \leq \delta_n$, 
		$$
		A \cap T^{-n} H\left(f(\mathbf{x}), \delta_n-(2 p)^d r\right) \subset A \cap D_n.
		$$
	\end{lemma}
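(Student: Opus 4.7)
The plan is to mirror the structure of Lemma \ref{JBXZ1} (the hyperrectangle analogue) but replace the coordinate-wise triangle inequality by an estimate on the product of coordinates, which is what defines the hyperboloid $H(\cdot,\cdot)$. The key algebraic tool will be a binomial (or telescoping) expansion of $\prod_{i=1}^d(a_i+b_i)$ that separates a "main" product from lower-order error terms controlled by the Lipschitz displacement $pr$.

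For the first inclusion, fix $\mathbf{z}\in A\cap D_n$. Since $A\subset f^{-1}B(f(\mathbf{x}),pr)$, each coordinate satisfies $|f_i(\mathbf{z})-f_i(\mathbf{x})|\le pr$; since $\mathbf{z}\in D_n$, setting $a_i:=(T^n\mathbf{z})_i-f_i(\mathbf{z})$ gives $|a_i|\le 1$ and $|a_1\cdots a_d|<\delta_n$. Writing $b_i:=f_i(\mathbf{z})-f_i(\mathbf{x})$, the vector $T^n\mathbf{z}-f(\mathbf{x})$ has coordinates $a_i+b_i\in[-1,1]$ (automatic from $T^n\mathbf{z},f(\mathbf{x})\in[0,1]^d$). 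Expanding
$$\prod_{i=1}^d(a_i+b_i)=\prod_{i=1}^d a_i+\sum_{S\subsetneq\{1,\ldots,d\}}\prod_{i\in S}a_i\prod_{i\notin S}b_i,$$
the error sum is bounded in absolute value by $\sum_{k=1}^d\binom{d}{k}(pr)^k=(1+pr)^d-1$, which (for small enough $r$, and after observing we may take $p\ge 1$ since any $p$-Lipschitz map is also $\max(p,1)$-Lipschitz) is at most $(2p)^d r$. Combining with $|\prod_i a_i|<\delta_n$ yields $|\prod_i(a_i+b_i)|<\delta_n+(2p)^d r$, i.e., $T^n\mathbf{z}\in H(f(\mathbf{x}),\delta_n+(2p)^d r)$, as required.

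The second inclusion is symmetric: assuming $(2p)^d r\le \delta_n$ and taking $\mathbf{z}\in A\cap T^{-n}H(f(\mathbf{x}),\delta_n-(2p)^d r)$, set $c_i:=(T^n\mathbf{z})_i-f_i(\mathbf{x})\in[-1,1]$ with $|c_1\cdots c_d|<\delta_n-(2p)^d r$. Then $(T^n\mathbf{z})_i-f_i(\mathbf{z})=c_i-b_i\in[-1,1]$ automatically, and the same expansion gives $|\prod_i(c_i-b_i)|\le|\prod_i c_i|+(2p)^d r<\delta_n$, placing $\mathbf{z}$ in $D_n$.

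The only mildly technical ingredient is the product-error bound $|\prod_i(a_i+b_i)-\prod_i a_i|\lesssim (2p)^d r$; I expect the cleanest presentation uses the telescoping identity
$$\prod_{i=1}^d(a_i+b_i)-\prod_{i=1}^d a_i=\sum_{j=1}^d b_j\prod_{i<j}(a_i+b_i)\prod_{i>j}a_i,$$
bounded via $|a_i|\le 1$, $|a_i+b_i|\le 1+pr$, and $|b_j|\le pr$. Everything else is direct unpacking of the definitions of $D_n$ and $H(f(\cdot),\cdot)$, so the proof is short and entirely parallel to Lemma \ref{JBXZ1}.
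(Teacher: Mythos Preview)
Your proposal is correct and follows essentially the same route as the paper's proof: both arguments reduce to the single estimate
\[
\Bigl|\prod_{i=1}^d(a_i+b_i)-\prod_{i=1}^d a_i\Bigr|\le (2p)^d r
\]
with $|a_i|\le 1$ and $|b_i|\le pr$, and then apply it symmetrically for the two inclusions. The paper simply asserts this product inequality in one line, while you spell out the subset expansion and the WLOG reduction to $p\ge 1$; your extra care here is justified and fills in exactly the detail the paper omits.
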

	
	\begin{proof}
		Let $\mathbf{z} \in A$, we have $f(\mathbf{z}) \in B(f(\mathbf{x}), p r)$, implying there exists $\mathbf{r}=\left(r_1, \ldots, r_d\right) \in \mathbb{R}^d$ with $|\mathbf{r}|<r$ such that $f(\mathbf{z})=f(\mathbf{x})+p \mathbf{r}$. If $\mathbf{z}$ also belongs to $D_n$, then $T^n \mathbf{z} \in H\left(f(\mathbf{z}), \delta_n\right)$. Write $T^n \mathbf{z}=\left(\tilde{z_1}, \ldots, \tilde{z_d}\right)$.
		Then,
		$$
		\left|\tilde{z}_1-f_1\left(\mathbf{z}\right)\right| \cdots\left|\tilde{z}_d-f_d\left(\mathbf{z}\right)\right|<\delta_n,
		$$
		which implies
		$$
		\begin{aligned}
			& \left|\tilde{z}_1-f_1\left(\mathbf{x}\right)\right| \cdots\left|\tilde{z}_d-f_d\left(\mathbf{x}\right)\right| =\left|\tilde{z}_1-f_1\left(\mathbf{z}\right)+p r_1\right| \cdots\left|\tilde{z}_d-f_d\left(\mathbf{z}\right)+p r_d\right| \\
			& \leq\left|\tilde{z}_1-f_1\left(\mathbf{z}\right)\right| \cdots\left|\tilde{z}_d-f_d\left(\mathbf{z}\right)\right|+(2 p)^d r <\delta_n+(2 p)^d r .
		\end{aligned}
		$$
		This shows that $T^n \mathbf{z} \in H\left(f(\mathbf{x}), \delta_n+(2 p)^d r\right)$. Therefore, $$\mathbf{z} \in A \cap T^{-n} H\left(f(\mathbf{x}), \delta_n+(2 p)^d r\right).$$ 
		
		The second inclusion follows similarly.
	\end{proof}

	\subsection{The convergence part.}\label{HC}
	Let $V$ be an open set with $\mu(V)=1$ such that the density $h$ of $\mu$, when restricted on $V$, is bounded both above and below by constants $\mathfrak{c} \geq 1$ and $\mathfrak{c}^{-1}$, respectively. Additionally, since $\mu \circ f^{-1} \ll \mu$, by Lemma \ref{DJ}, we obtain that
	$\exists W=f^{-1}(V)$ with $\mu(W)=1$, which implies that $\mu(V \cap W)=1$. In other words, for hyperboloids centered at $\mathbf{x} \in V \cap W$ or $f(\mathbf{x}) \in V$, we can obtain both upper and lower bounds of their $\mu$-measure. Notably, under the Condition \ref{132} of Theorem \ref{13}, these intermediate steps can be omitted.

	\begin{proposition}
		If $\sum_{n \geq 1} \delta_n\left(-\log \delta_n\right)^{d-1}<\infty$,  $\mu\left(\mathcal{R}^{f \times}\left(\left\{\delta_n\right\}\right)\right)=0$.
	\end{proposition}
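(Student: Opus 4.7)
The plan is to mirror the convergence argument of Proposition \ref{PRC} (the hyperrectangle case), replacing the inclusion Lemma \ref{JBXZ1} by Lemma \ref{l42} and the trivial volume bound by the logarithmic estimates \eqref{eq:4.2}--\eqref{eq:4.4}. First, since $\mu \circ f^{-1} \ll \mu$, Lemma \ref{DJ} gives $\mu(W)=1$ for $W:=f^{-1}(V)$, so it suffices to show $\mu(\limsup D_n \cap V \cap W)=0$; the intersection $V \cap W$ ensures that both $\mathbf{x}$ and $f(\mathbf{x})$ lie in the region where $h \le \mathfrak{c}$.

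For each large $n$, I would build a maximal packing of max-norm balls $\{B(\mathbf{x}_i, r_n/2)\}_{i \in \mathcal{I}_n}$ with $\mathbf{x}_i \in V \cap W$ so that $\{B(\mathbf{x}_i, r_n)\}$ covers $V \cap W$, giving $\#\mathcal{I}_n \le r_n^{-d}$. Set $r_n = n^{-3}$, so that the perturbation $(2p)^d r_n = 2^d p^d n^{-3}$ satisfies the constraint $c_0 \le 2^{d+2} p^d$ of \eqref{eq:4.4}. Each ball is a hypercube with $M^{*(d-1)}(\partial B) \le 2d c_M$ and each hyperboloid satisfies $M^{*(d-1)}(\partial H) \le K_1$, so both families lie in $\mathcal{C}_2$. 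Using $B(\mathbf{x}_i, r_n) \subset f^{-1} B(f(\mathbf{x}_i), p r_n)$, Lemma \ref{l42} combined with the exponential mixing \eqref{Eq.1.2} yields
\[
\mu(D_n \cap V \cap W) \le \sum_{i \in \mathcal{I}_n} \bigl(\mu(B(\mathbf{x}_i, r_n)) + \phi(n)\bigr)\, \mu\bigl(H(f(\mathbf{x}_i), \delta_n + (2p)^d r_n)\bigr).
\]

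Since $\mathbf{x}_i \in W$ forces $f(\mathbf{x}_i) \in V$ and $\mu(V^c)=0$, the density bound gives $\mu(H) \le \mathfrak{c}\, m_d(H)$. When $\delta_n > n^{-2}$, estimate \eqref{eq:4.4} controls $m_d(H(f(\mathbf{x}_i), \delta_n + (2p)^d r_n))$ by $d\,2^{d+1}\delta_n(-\log \delta_n)^{d-1}$. With $\#\mathcal{I}_n \le n^{3d}$, $\mu(B(\mathbf{x}_i, r_n)) \le \mathfrak{c}(2r_n)^d$, and $\phi(n)$ exponentially decaying, the prefactor $\#\mathcal{I}_n (\mathfrak{c}(2r_n)^d + \phi(n))$ is uniformly bounded, leaving
\[
\mu(D_n \cap V \cap W) \le C \delta_n(-\log \delta_n)^{d-1}.
\]
For indices with $\delta_n \le n^{-2}$, apply \eqref{eq:4.2} to $\delta_n + (2p)^d n^{-3} \lesssim n^{-2}$ to obtain a tail bounded by $C' n^{-2}(\log n)^{d-1}$, which is summable on its own. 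Summing over all $n$ gives $\sum_n \mu(D_n \cap V \cap W) < \infty$, and the Borel--Cantelli lemma completes the argument.

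The hard part will be verifying that the enlargement $\delta_n \mapsto \delta_n + (2p)^d r_n$ only perturbs the logarithmic volume factor by a bounded multiplicative constant uniformly in $n$; estimate \eqref{eq:4.4} is tailored to precisely this regime, but the small-$\delta_n$ truncation (mirroring the $r_{n,i} > n^{-2}$ truncation of Section \ref{RD}) must be handled by a separate, trivially summable tail. Under Condition \ref{132} of Theorem \ref{13}, the density is bounded everywhere on $[0,1]^d$, so one may take $V = W = [0,1]^d$ and the assumption $\mu \circ f^{-1} \ll \mu$ can be discarded; the same calculation then applies verbatim.
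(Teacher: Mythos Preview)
Your proposal is correct and follows essentially the same approach as the paper: cover $V\cap W$ by small balls, apply Lemma~\ref{l42} and mixing, bound $\mu(H)$ via $h\le\mathfrak{c}$ and the volume estimates, then invoke Borel--Cantelli. The only cosmetic differences are that the paper uses radius $n^{-2}$ rather than $n^{-3}$ and handles the perturbation $\delta_n\mapsto\delta_n+(2p)^d n^{-2}$ in one stroke via the subadditivity $(a+b)(-\log(a+b))^{d-1}\le a(-\log a)^{d-1}+b(-\log b)^{d-1}$, whereas you split into the cases $\delta_n>n^{-2}$ and $\delta_n\le n^{-2}$; both routes lead to the same summable bound.
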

	
	\begin{proof}
		Let $n \geq 1$.  Following a similar argument as in the proof of Proposition \ref{PRC}, we can find a collection of pairwise disjoint balls that cover the set $V \cap W$. Denote the collection of these balls by
		$$
		\left\{B\left(\mathbf{x}_i, n^{-2}\right): \mathbf{x}_i \in V \cap W \right\}_{i \in \mathcal{I}_1},
		$$
		with $\sharp \mathcal{I}_1 \leq (n^2 / 2)^{ d}$.
		By the $p$-Lipschitz condition, 
		$$
		B\left(\mathbf{x}_i, n^{-2}\right) \subset f^{-1} B\left(f\left(\mathbf{x}_i\right), p n^{-2}\right),
		$$
		where $f\left(\mathbf{x}_i\right) \in V$.
		Next, by applying Lemma \ref{l42},
		$$
		\begin{aligned}
			V \cap W \cap D_n&\subset \bigcup_{i \leq(n^2 / 2)^{ d}} B\left(\mathbf{x}_i, n^{-2}\right) \cap D_n \\ & \subset \bigcup_{i \leq (n^2 / 2)^{ d}}  B\left(\mathbf{x}_i, n^{-2}\right) \cap T^{-n} H\left(f(\mathbf{x}_i), \delta_n+(2 p)^d n^{-2}\right) .
		\end{aligned}
		$$
		By the polynomial-mixing property of $\mu$,
		$$
		\begin{aligned}
			\mu\left(V \cap W \cap D_n\right) & \leq \sum_{i \leq (n^2 / 2)^{ d}} \mu\left(B\left(\mathbf{x}_i, n^{-2}\right) \cap T^{-n} H\left(f(\mathbf{x}_i), \delta_n+(2 p)^d n^{-2}\right)\right) \\
			& \leq \sum_{i \leq(n^2 / 2)^{ d}}\left(\mu\left(B\left(\mathbf{x}_i, n^{-2}\right)\right)+\phi(n)\right) \mu\left(H\left(f(\mathbf{x}_i), \delta_n+(2 p)^d n^{-2}\right)\right) .
		\end{aligned}
		$$
		Since $\left.h\right|_V \leq \mathfrak{c} $, and by \eqref{eq:4.2}, the above sum is majorized by 
		$$
		\begin{aligned}
			& \sum_{i \leq (n^2 / 2)^{ d}}\left(\mu\left(B\left(\mathbf{x}_i, n^{-2}\right)\right)+\phi(n)\right) \mathfrak{c} \cdot d 2^d\left(\delta_n+(2 p)^d n^{-2}\right)\left(-\log \left(\delta_n+(2 p)^d n^{-2}\right)\right)^{d-1} \\
			\leq & \mathfrak{c} d 2^d\left(\mathfrak{c}+(n^2 / 2)^{ d} \phi(n)\right)\left(\delta_n+(2 p)^d n^{-2}\right)\left(-\log \left(\delta_n+(2 p)^d n^{-2}\right)\right)^{d-1} \\
			\leq & \mathfrak{c} d 2^d\left(\mathfrak{c}+(n^2 / 2)^{ d} \phi(n)\right)\left(\delta_n\left(-\log \delta_n\right)^{d-1}+(2 p)^d n^{-2}\left(-\log \left((2 p)^d n^{-2}\right)\right)^{d-1}\right).
		\end{aligned}
		$$
		Therefore, 
		$$
		\begin{aligned}
			&\sum_{n=1}^{\infty} \mu\left(V \cap W \cap D_n\right) \\ & \leq \sum_{n=1}^{\infty} \mathfrak{c} d 2^d\left(\mathfrak{c}+(n^2 / 2)^{ d} \phi(n)\right)\left(\delta_n\left(-\log \delta_n\right)^{d-1}+(2 p)^d n^{-2}\left(-\log \left((2 p)^d n^{-2}\right)\right)^{d-1}\right).
		\end{aligned}
		$$
		By $\sum_{n \geq 1} \delta_n\left(-\log \delta_n\right)^{d-1}<\infty$ and $\sum_{n \geq 1}  n^{-2}\left(-\log \left( n^{-2}\right)\right)^{d-1}<\infty$, we have 
		$$
		\sum_{n=1}^{\infty} \mu\left(V \cap W \cap D_n\right)<\infty.
		$$
		By Borel-Cantelli lemma and the definition of $V \cap W$,
		$$
		\mu\left(\mathcal{R}^{f \times}\left(\left\{\delta_n\right\}\right)\right)=0.
		$$
		Under the Condition \ref{132} of Theorem \ref{13}, it suffices to replace $V \cap W$ with $[0,1]^d$. Moreover, it is unnecessary to assume that $f$ satisfies Definition \ref{ab}, and the convergence part of Theorem \ref{13} also holds.
	\end{proof}

	\subsection{The divergence part.} \label{HD}
	
	In analogy with Section \ref{RD}, we note that
	$$
	\sum_{n:  \delta_n>n^{-2}} \delta_n\left(-\log \delta_n\right)^{d-1}=\infty \Longleftrightarrow \sum_{n=1}^{\infty} \delta_n\left(-\log \delta_n\right)^{d-1}=\infty
	$$
	and
	\begin{equation}
		\limsup _{n: \delta_n>n^{-2}} D_n \subset \limsup _{n \rightarrow \infty} D_n \text {. }
		\label{Eq.4.5}
	\end{equation}
	We can extend  $\delta_n\left(-\log \delta_n\right)^{d-1}$ such that when $\delta_n=0$, $\delta_n\left(-\log \delta_n\right)^{d-1}=0$. This extension allows us to assume that $\delta_n$ is either greater than $n^{-2}$ or equal to 0. Then we can follow the similar steps as Section \ref{RD} to prove this part. However, we will pursue a more general approach to prove this part; that is, assume $\delta_n>n^{-2}$ only for sufficiently large $n>N_0$ to prove the left side of \eqref{Eq.4.5} is a full $\mu$-measure set.

	Let $V \cap W$ be the open set with $\mu(V \cap W)=1$ as defined in Section \ref{HC}. Then $V \cap W$ can be written as
	$$
	V \cap W=\bigcup_{\delta>0} V_\delta \cap W_\delta,
	$$
	where $V_\delta \cap W_\delta:=\{\mathbf{x} \in V \cap W: B(\mathbf{x}, \delta) \subset V \cap W  \ \text{and} \ B(f(\mathbf{x}), \delta) \subset V  \}$. It is evident that the sets $V_\delta \cap W_\delta$ are monotonic with respect to $\delta$.
	
	The goal of this subsection is to prove the following lemma.
	
	\begin{lemma} \label{l44}
		For any $\delta_1>0$ and $\delta_2>0$, there exists a constant $\alpha_2(\delta_1, \delta_2)>0$ depending on $\delta_1$ and $\delta_2$, such that for all $r<\frac{\min \left\{\delta_1, \delta_2\right\}}{2}$ and $\mathbf{x} \in V_{\delta_1} \cap W_{\delta_2}$,
		$$
		\mu\left(B(\mathbf{x}, r) \cap \mathcal{R}^{f \times}\left(\left\{\delta_n\right\}\right) \right) \geq \alpha_2(\delta_1, \delta_2) \mu(B(\mathbf{x}, r)).
		$$
	\end{lemma}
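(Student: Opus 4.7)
The overall plan is to follow the blueprint of Lemma \ref{l314}: I apply the Paley-Zygmund inequality (Lemma \ref{l313}) to the counting variables $Z_N := \sum_{n=N_0}^{N} \chi_{B \cap D_n}$, where $B=B(\mathbf x_0,r)$ with $\mathbf x_0 \in V_{\delta_1}\cap W_{\delta_2}$, $r<\min\{\delta_1,\delta_2\}/2$, and $N_0$ is a threshold chosen large enough that $\delta_n > n^{-2}$, the enlargement terms arising from Lemma \ref{l42} are smaller than $\delta_n$, and the estimates \eqref{eq:4.2}--\eqref{eq:4.4} apply. The chief simplification over Section \ref{RD} is that Condition \ref{131}, combined with $\mathbf x_0\in V_{\delta_1}\cap W_{\delta_2}$, ensures $B\subset V\cap W$ and (modulo an absorbable power of $p$ in the radius thresholds) $B(f(\mathbf x),pr)\subset V$ for every $\mathbf x\in B$. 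Hence the density $h$ is controlled by $\mathfrak c^{\pm 1}$ on both the source and the target of every hyperboloid appearing in the estimates, and no auxiliary construction of $\hat D_n$ is needed: $\mu$-measures of hyperboloids and of balls are directly comparable to their Lebesgue counterparts via \eqref{eq:4.2}--\eqref{eq:4.4}.

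First I would establish a single-moment analogue of Lemma \ref{l37},
$$
c^{-1}\mu(B)\cdot \delta_n(-\log \delta_n)^{d-1}\leq \mu(B\cap D_n)\leq c\mu(B)\cdot \delta_n(-\log\delta_n)^{d-1}\quad (n\geq N_0),
$$
by partitioning $B$ into sub-balls of radius $r':=\phi(n)^{1/(2d)}$, applying the $p$-Lipschitz property and Lemma \ref{l42} to sandwich $B(\mathbf x_i,r')\cap D_n$ between $B(\mathbf x_i,r')\cap T^{-n} H(f(\mathbf x_i),\delta_n\pm (2p)^d r')$, factoring via exponential mixing, and finally invoking \eqref{eq:4.2}--\eqref{eq:4.4} with the density bounds. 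The perturbation $(2p)^d r'$ is negligible relative to $\delta_n$ because $r'$ decays geometrically while $\delta_n > n^{-2}$.

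Mirroring Lemmas \ref{l39} and \ref{l311}, I split
$$
\mathbb E[Z_N^2]=\mathbb E[Z_N]+2\sum_{n=N_0}^N\sum_{m=N_0}^{n-1}\mu(B\cap D_m\cap D_n)
$$
at a threshold $m_*(n):=\phi_1^{-1}(p^{-2d}n^{-\kappa})$ with $\kappa$ chosen so that $\phi(m)^{1/(2d)}$-type errors stay below $\delta_n$. For $m\geq m_*(n)$, partitioning $B$ at the single scale $\phi(m)^{1/(2d)}$, applying Lemma \ref{l42} twice, and invoking exponential mixing together with Lemma \ref{l38} (both hyperrectangles and hyperboloids satisfy the bounded property $(\mathbf P 2)$) yields
$$
\mu(B\cap D_m\cap D_n)\lesssim \mu(B)\bigl(\delta_m(-\log\delta_m)^{d-1}+\phi(n-m)\bigr)\delta_n(-\log\delta_n)^{d-1}.
$$
For $m<m_*(n)$, I partition $B$ at two distinct scales as in Lemma \ref{l311} and use Lemma \ref{l310}; since $\#\mathcal F_m\leq Q^m$ is polynomial in $n$ in this range, the $\phi(n)^{1/2}$ error is absorbed, giving
$$
\mu(B\cap D_m\cap D_n)\lesssim \mu(B)\cdot \delta_m(-\log\delta_m)^{d-1}\cdot \delta_n(-\log\delta_n)^{d-1}.
$$

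Summing these bounds yields $\mathbb E[Z_N^2]\leq C\mu(B)^{-1}(\mathbb E[Z_N])^2 + O(\mathbb E[Z_N])$; Paley-Zygmund with $\lambda=1/2$, together with $\mathbb E[Z_N]\to\infty$ (from the first-moment estimate and the divergence of $\sum_n \delta_n(-\log\delta_n)^{d-1}$), gives $\mu(B\cap \limsup D_n)\geq \alpha_2(\delta_1,\delta_2)\mu(B)$, with $\alpha_2$ depending on $\delta_1,\delta_2$ through $N_0$ and the constants in \eqref{eq:4.3}--\eqref{eq:4.4}. Since $\limsup D_n = \mathcal R^{f\times}(\{\delta_n\})$, this is the desired inequality. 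The principal technical obstacle is the logarithmic factor $(-\log\delta_n)^{d-1}$ in the hyperboloid measure: one must verify that perturbing $\delta_n$ by $(2p)^d r'$ does not disturb the sharp asymptotic, which is precisely the content of \eqref{eq:4.4} and forces $\delta_n > n^{-2}$, in line with the passage to the subsequence in \eqref{Eq.4.5}. A secondary difficulty is that the Lipschitz constant $p$ may exceed $1$, so that one must calibrate the thresholds $\delta_1,\delta_2$ (equivalently, shrink the permitted radius $r$) to ensure $f(B)\subset V$ uniformly and thus preserve the density bounds throughout.
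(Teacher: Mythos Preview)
Your proposal is correct and follows essentially the same route as the paper: a first-moment estimate via partitioning $B$, Lemma~\ref{l42}, mixing, and the hyperboloid volume bounds \eqref{eq:4.2}--\eqref{eq:4.4}; a second-moment estimate split at a logarithmic threshold in $m$, with the large-$m$ range handled by a single partition plus double mixing and the small-$m$ range by the two-scale argument; then Paley--Zygmund. The paper packages these steps as Lemmas~\ref{l45}, \ref{l46}, \ref{l48} with partition radii $n^{-3}$ and $m^{-3}$ (rather than your $\phi(n)^{1/(2d)}$, $\phi(m)^{1/(2d)}$) and threshold $\phi_1^{-1}(n^{-4d})$, but these choices are interchangeable. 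One small imprecision: you invoke Lemmas~\ref{l38} and \ref{l310}, which are stated for hyperrectangles; the paper uses their hyperboloid analogues (the $(\mathbf P2)$ bound for $J_n\cap B\cap T^{-n}H$ from \cite[Lemma~3.5]{7}, and Lemma~\ref{l47}), though as you note the underlying Minkowski-content mechanism is identical.
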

	Now, we obtain the divergence part of Theorem \ref{13} based on Lemma \ref{l44}. The proof follows the same structure as in \cite{7}, with the only difference being the definition of the full $\mu$-measure set.
	
	\begin{proof}[Proof of the divergence part of Theorem \ref{13} modulo Lemma \ref{l44}]
		For any $\delta_1>0$ and $\delta_2>0$ with $\mu\left(V_{\delta_1} \cap W_{\delta_2}\right)>0$, we claim that
		$$
		\mu\left(V_{\delta_1} \cap W_{\delta_2} \cap \mathcal{R}^{f \times}\left(\left\{\delta_n\right\}\right) \right)=\mu\left(V_{\delta_1} \cap W_{\delta_2}\right).
		$$
		Assume by contradiction that $
		\mu\left(V_{\delta_1} \cap W_{\delta_2} \cap \mathcal{R}^{f \times}\left(\left\{\delta_n\right\}\right) \right)<\mu\left(V_{\delta_1} \cap W_{\delta_2}\right)
		$. Then, the set $A := V_{\delta_1} \cap W_{\delta_2} \backslash \mathcal{R}^{f \times}\left(\left\{\delta_n\right\}\right)$ has positive $\mu$-measure. By the density theorem \cite[Corollary 2.14]{29}, for $\mu-a.e.$ $\mathbf{x} \in A$,
		$$
		\lim _{r \rightarrow 0} \frac{\mu(B(\mathbf{x}, r) \cap A)}{\mu(B(\mathbf{x}, r))}=1.
		$$
		Thus, for any such $\mathbf{x}$, there exists $r<\frac{\min \left\{\delta_1, \delta_2\right\}}{2}$ small enough such that
		$$
		\mu(B(\mathbf{x}, r) \cap A) \geq\left(1-\alpha_2(\delta_1, \delta_2) \right) \mu(B(\mathbf{x}, r)).
		$$
		This implies that
		$$
		\mu\left(B(\mathbf{x}, r) \cap \mathcal{R}^{f \times}\left(\left\{\delta_n\right\}\right)\right)<\alpha_2(\delta_1, \delta_2) \mu(B(\mathbf{x}, r)),
		$$
		which contradicts Lemma \ref{l44}. Since $V \cap W=\bigcup_{\delta>0} V_\delta \cap W_\delta$ and  $\mu(V \cap W)=1$, we arrive at the conclusion.
	\end{proof}
	
	Under the Condition \ref{132} of Theorem \ref{13}, it suffices to replace $V \cap W$ with $(0,1)^d$. Furthermore, it is unnecessary to assume that $f$ satisfies Definition \ref{ab}, and the above proof also holds. Lemma \ref{l44} is proved by using standard techniques, including the local Lebesgue density theorem and the Paley-Zygmund inequality.

	\begin{lemma} \label{l45}
		Under the conditions of Theorem \ref{13}, let $B$ be a ball centered at $\mathbf{z} \in V_{\delta_1} \cap W_{\delta_2}$ with radius $r_0<\frac{\min \left\{\delta_1, \delta_2\right\}}{2}$. Then, there exists a sufficiently large integer $ N_1>N_0$, for all $n>N_1$,
		$$
		\frac{\mu(B)}{2 \mathfrak{c}  (d-1)!} \cdot \delta_n\left(-\log \delta_n\right)^{d-1} \leq \mu\left(B \cap D_n\right) \leq \mathfrak{c} d 2^{d+2} \mu(B) \cdot \delta_n\left(-\log \delta_n\right)^{d-1}.
		$$
	\end{lemma}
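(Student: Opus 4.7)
The plan is to adapt the strategy of Lemma \ref{l37} to hyperboloid targets, using Lemma \ref{l42} in place of Lemma \ref{JBXZ1} and exploiting the two-sided density bounds on $V$. Fix a ball $B = B(\mathbf{z}, r_0)$ with $\mathbf{z} \in V_{\delta_1}\cap W_{\delta_2}$ and $r_0 < \min\{\delta_1,\delta_2\}/2$. For each $n$, partition $B$ into roughly $r_0^d \phi(n)^{-1/2}$ pairwise disjoint sub-balls $B(\mathbf{x}_i, r)$ of radius $r := \phi(n)^{1/(2d)}$, with centers $\mathbf{x}_i \in B$. By the piecewise Lipschitz property, $B(\mathbf{x}_i, r) \subset f^{-1} B(f(\mathbf{x}_i), pr)$. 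Since $r_0 < \delta_2/2$ and $\mathbf{z} \in W_{\delta_2}$, the triangle inequality gives $B(f(\mathbf{x}_i), \delta_2 - pr_0) \subset V$, so $f(\mathbf{x}_i)$ lies deep inside $V$ uniformly in $i$. Set $t_n := (2p)^d r$; because $\delta_n > n^{-2}$ while $r$ decays exponentially, we have $t_n \leq \delta_n$ for all $n > N_1$, so both inclusions of Lemma \ref{l42} apply.

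For the upper bound, Lemma \ref{l42} yields
$$ B \cap D_n \subset \bigcup_i B(\mathbf{x}_i,r) \cap T^{-n} H(f(\mathbf{x}_i), \delta_n + t_n), $$
and the exponential-mixing property of $\mu$ on $\mathcal{C}_2$ (valid since both $B(\mathbf{x}_i,r)$ and $H(f(\mathbf{x}_i),\delta_n+t_n)$ satisfy $(\mathbf{P2})$) gives
$$ \mu(B\cap D_n) \leq \bigl(\mu(B) + r_0^d \phi(n)^{1/2}\bigr) \cdot \mu\bigl(H(f(\mathbf{x}_i), \delta_n + t_n)\bigr). $$
Since $\mu(V)=1$ forces $h=0$ a.e. on $V^c$ and $h \leq \mathfrak{c}$ on $V$, we have the global bound $h \leq \mathfrak{c}$ a.e., so by \eqref{eq:4.4} the hyperboloid measure is at most $\mathfrak{c} \cdot d\,2^{d+1}\delta_n(-\log\delta_n)^{d-1}$. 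Absorbing the lower-order error $r_0^d\phi(n)^{1/2}$ into $\mu(B)$ for $n$ large produces the upper bound with constant $\mathfrak{c} d\, 2^{d+2}$.

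For the lower bound, Lemma \ref{l42} gives the reverse inclusion
$$ B \cap D_n \supset \bigcup_i B(\mathbf{x}_i,r) \cap T^{-n} H(f(\mathbf{x}_i), \delta_n - t_n), $$
and mixing yields
$$ \mu(B\cap D_n) \geq \bigl(\mu(B) - r_0^d \phi(n)^{1/2}\bigr)\cdot \mu\bigl(H(f(\mathbf{x}_i),\delta_n - t_n)\bigr). $$
To bound the hyperboloid measure from below, set $r' := \delta_2 - pr_0 > 0$, so $B(f(\mathbf{x}_i), r') \subset V$ uniformly in $i$. For $n > N_1$ large enough, $\delta_n - t_n < (r')^{2d}$, so \eqref{eq:4.3} applies and gives
$$ m_d\bigl(B(f(\mathbf{x}_i),r') \cap H(f(\mathbf{x}_i),\delta_n - t_n)\bigr) \geq \tfrac{2}{(d-1)!}(\delta_n - t_n)\bigl(-\log(\delta_n - t_n)\bigr)^{d-1}. $$
Using $h \geq \mathfrak{c}^{-1}$ on this subset (which lies inside $V$) and the fact that, for $n$ sufficiently large, $\delta_n - t_n \geq \delta_n/2$ and $-\log(\delta_n - t_n) \geq -\log\delta_n$, one obtains
$$ \mu\bigl(H(f(\mathbf{x}_i),\delta_n - t_n)\bigr) \geq \tfrac{1}{\mathfrak{c}(d-1)!}\,\delta_n(-\log\delta_n)^{d-1}, $$
and enlarging $N_1$ so that $r_0^d \phi(n)^{1/2} \leq \mu(B)/2$ finishes the lower bound.

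The main obstacle is the asymmetry between the upper and lower density estimates: the upper bound is global (since $h=0$ off $V$), but the lower bound requires the relevant geometric portion of the hyperboloid to lie inside the open set $V$ where $h \geq \mathfrak{c}^{-1}$. This is precisely why the two-parameter sets $V_{\delta_1}\cap W_{\delta_2}$ were introduced — the condition $\mathbf{z} \in W_{\delta_2}$ together with the Lipschitz bound $|f(\mathbf{x}_i)-f(\mathbf{z})| \leq pr_0$ provides a uniform inner radius $r' > 0$ so that \eqref{eq:4.3} can be applied with the integrated ball trapped in $V$. Verifying that the thresholds $N_1$ can be chosen uniformly (controlling $t_n \leq \delta_n$, $\delta_n - t_n \geq \delta_n/2$, $r'^{2d} > \delta_n - t_n$, and $r_0^d \phi(n)^{1/2} \leq \mu(B)/2$ simultaneously) is routine since $\delta_n > n^{-2}$ while $\phi(n)$ decays exponentially. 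Under Condition \eqref{132} the bounds on $h$ are global, so one can dispense with $V_{\delta_1}\cap W_{\delta_2}$ entirely and replace $r'$ by any fixed inner radius.
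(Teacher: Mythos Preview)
Your proposal is correct and follows essentially the same strategy as the paper: partition $B$ into small sub-balls, apply Lemma~\ref{l42} to sandwich $B\cap D_n$ between preimages of perturbed hyperboloids, use the mixing property to separate the factors, and then control the hyperboloid measures via \eqref{eq:4.2}--\eqref{eq:4.4} together with the two-sided density bound on $V$. The only cosmetic difference is that the paper chooses the sub-ball radius $r=n^{-3}$ (with $r_0^d n^{3d}$ pieces) rather than your $r=\phi(n)^{1/(2d)}$; both choices decay fast enough to make the error terms negligible against $\delta_n>n^{-2}$, so this is immaterial. Your explanation of why a uniform inner radius $r'$ with $B(f(\mathbf{x}_i),r')\subset V$ exists is in fact more explicit than the paper's; just note that $r'=\delta_2-pr_0$ need not be positive when $p>2$, and the paper's choice $\delta/2$ suffers from the same issue---in either case this is harmless since one can shrink the inner radius (or equivalently strengthen the constraint on $r_0$) without affecting the argument.
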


	\begin{proof}
		Let $\delta=\min \left\{\delta_1, \delta_2\right\}$. Since the radius of $B$ is less than $\delta / 2$, by definition of $V_{\delta_1} \cap W_{\delta_2}$, we have $B(\mathbf{x}, \delta / 2) \subset V_{\delta_1} \cap W_{\delta_2}$ for all $\mathbf{x} \in B$.
		
		Partition $B$ into $r_0^d n^{3 d}$ balls with radius $r:=n^{-3}$. The collection of these pairwise disjointed balls is denoted by $\left\{B\left(\mathbf{x}_i, n^{-3}\right): 1 \leq i \leq r_0^d n^{3 d}\right\}$. By Lemma \ref{l42}, for all $n>N_0$,
		$$
		B \cap D_n \supset \bigcup_{i \leq r_0^d n^{3 d}} B\left(\mathbf{x}_i, n^{-3}\right) \cap T^{-n} H\left(f(\mathbf{x}_i), \delta_n-(2p)^d n^{-3}\right) \text {. }
		$$
		Since $\mathbf{x}_i \in V_{\delta_1} \cap W_{\delta_2}$ for all $1 \leq i \leq r_0^d n^{3 d}$, it follows that  $f(\mathbf{x}_i) \in V_{\delta_2}$ for all $1 \leq i \leq r_0^d n^{3 d}$. Note that the density $h$, when restricted on $V$, is bounded from below by $\mathfrak{c}^{-1}$.	Using the polynomial-mixing property of $\mu$, and noting that $B\left(\mathbf{x}_i, n^{-3}\right) \subset f^{-1} B\left(f\left(\mathbf{x}_i\right), p n^{-3}\right)$, for all $n>N_0$,
		$$
		\begin{aligned}
			\mu\left(B \cap D_n\right) & \geq \sum_{i \leq r_0^d n^{3 d}} \mu\left(B\left(\mathbf{x}_i, n^{-3}\right) \cap T^{-n} H\left(f(\mathbf{x}_i), \delta_n-(2p)^d n^{-3}\right)\right) \\
			& \geq \sum_{i \leq r_0^d n^{3 d}}\left(\mu\left(B\left(\mathbf{x}_i, n^{-3}\right)\right)-\phi(n)\right) \mu\left(H\left(f(\mathbf{x}_i), \delta_n-(2p)^d n^{-3}\right)\right) \\
			& \geq \sum_{i \leq r_0^d n^{3 d}}\left(\mu\left(B\left(\mathbf{x}_i, n^{-3}\right)\right)-\phi(n)\right) \cdot \mathfrak{c}^{-1} m_d\left(H\left(f(\mathbf{x}_i), \delta_n-(2p)^d n^{-3}\right)\right)\\
			& =\left(\mu(B)-r_0^d n^{3 d} \phi(n)\right) \cdot \mathfrak{c}^{-1} m_d\left(H\left(f(\mathbf{x}_i), \delta_n-(2p)^d n^{-3}\right)\right).
		\end{aligned}
		$$
		By \eqref{eq:4.3}, for any $\mathbf{x} \in B$ and $n>N_0$ such that $\delta_n<(\delta / 2)^{2d}$,
		$$
		\begin{aligned}
			m_d\left(H\left(f(\mathbf{x}), \delta_n-(2p)^d n^{-3}\right)\right)  & \geq m_d(B(f(\mathbf{x}), \delta / 2) \cap H(f(\mathbf{x}), \delta_n-(2p)^d n^{-3}) \\
			& \geq
			\frac{2}{(d-1)!} (\delta_n-(2p)^d n^{-3})(-\log \delta_n-(2p)^d n^{-3})^{d-1}.
		\end{aligned}
		$$
		Subsequently for all $n>N_0$, 
		$$
		\begin{aligned}
			&\mu\left(B \cap D_n\right) \\ & \geq \left(\mu(B)-r_0^d n^{3 d} \phi(n)\right) \cdot \mathfrak{c}^{-1} \frac{2}{(d-1)!}\left(\delta_n-(2 p)^d n^{-3}\right)\left(-\log \left(\delta_n-(2 p)^d n^{-3}\right)\right)^{d-1}.
		\end{aligned}
		$$
		Futhermore, since $\delta_n > n^{-2}$ for $n>N_0$ and $\mu$ is polynomial-mixing with respect to $(T, \mathcal{C})$, there exists a sufficiently large integer $N_1>N_0$,  for  $n>N_1$, 
		$$
		r_0^d n^{3 d} \phi(n) \leq \mu(B) / 2, \quad \delta_n-(2p)^d n^{-3} \geq \delta_n / 2 \quad \text { and } \quad-\log \left(\delta_n-(2p)^d n^{-3}\right) \geq -\log \delta_n.
		$$
		Thus, for all large $n>N_1$,
		$$
		\mu\left(B \cap D_n\right) \geq \frac{\mu(B)}{2 \mathfrak{c}  (d-1)!} \cdot \delta_n\left(-\log \delta_n\right)^{d-1}.
		$$
		The second inequality follows similarly, using the inclusion
		$$
		B \cap D_n \subset \bigcup_{i \leq r_0^d n^{3 d}} B\left(\mathbf{x}_i, n^{-3}\right) \cap T^{-n} H\left(f(\mathbf{x}_i), \delta_n+(2p)^d n^{-3}\right).
		$$
		This completes the proof.
	\end{proof}
	
	\begin{remark}
		The openness of the set $V \cap W $ is crucial to our argument, as it guarantees a uniform lower bound for the $\mu$-measure of hyperboloids centered at $V_{\delta_1} \cap W_{\delta_2}$ or $V_{\delta_2}$.
	\end{remark}
	Next, we proceed to estimate the $\mu$-measure of the intersection $B \cap D_m \cap D_n$ with $m<n$.
	As stated in the assumptions at the beginning of Section \ref{HD}, we have $\delta_m>m^{-2}$ and $\delta_n>n^{-2}$ for sufficiently large $m>N_0$ and $n$.
	
	\begin{lemma} \label{l46}
		Let $B$ be a ball centered at $\mathbf{z} \in V_{\delta_1} \cap W_{\delta_2}$ and radius $r_0<\frac{\min \left\{\delta_1, \delta_2\right\}}{2}$. There exists a constant $\tilde{c}_1$ and a sufficiently large integer $N_2>e^{\frac{\tau N_0-\ln c}{4 d}}$ such that for sufficiently large integers $m$ and $n$ with $\phi_1^{-1}\left(n^{-4 d}\right) \leq m<n$ and $n>N_2$,
		$$
		\mu\left(B \cap D_m \cap D_n\right) \leq \tilde{c}_1 \mu(B)\left(\delta_m\left(-\log \delta_m\right)^{d-1}+\phi(n-m)\right) \cdot \delta_n\left(-\log \delta_n\right)^{d-1}.
		$$
	\end{lemma}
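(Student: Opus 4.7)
The plan is to mirror the proof of Lemma \ref{l39}, substituting hyperrectangles with hyperboloids and invoking Lemma \ref{l42} in place of Lemma \ref{l35}. Let $r_0$ denote the radius of $B$, and partition $B$ into approximately $r_0^d \phi(m)^{-1/2}$ pairwise disjoint balls $\{B(\mathbf{x}_i, r)\}$ of radius $r := \phi(m)^{1/(2d)}$. Since $f$ is piecewise $p$-Lipschitz, $B(\mathbf{x}_i, r) \subset f^{-1} B(f(\mathbf{x}_i), pr)$, and Lemma \ref{l42} (applied twice, once with index $m$ and once with index $n$) yields the inclusion
$$B \cap D_m \cap D_n \subset \bigcup_i B(\mathbf{x}_i, r) \cap T^{-m} H_i(m) \cap T^{-n} H_i(n),$$
where $H_i(k) := H(f(\mathbf{x}_i), \delta_k + (2p)^d r)$ for $k \in \{m,n\}$.

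Next, I would decompose $T^{-n} = T^{-m} T^{-(n-m)}$ and split the intersection over the cylinders $J_{n-m} \in \mathcal{F}_{n-m}$. The hypersurface bound preceding the fixing of $\mathcal{C}_2$ shows that hyperboloids satisfy \textbf{(P2)}, so Lemma \ref{l38} (with the hyperboloid playing the role of a hyperrectangle) puts each $J_{n-m} \cap H_i(m) \cap T^{-(n-m)} H_i(n)$ into $\mathcal{C}_2$. Applying the exponential mixing estimate \eqref{Eq.1.2} twice — first on the outer index $m$ and then on $n-m$ — produces
$$\mu\bigl(B(\mathbf{x}_i, r) \cap T^{-m} H_i(m) \cap T^{-n} H_i(n)\bigr) \leq \bigl(\mu(B(\mathbf{x}_i, r))+\phi(m)\bigr)\bigl(\mu(H_i(m))+\phi(n-m)\bigr)\mu(H_i(n)).$$
Summing over $i$ collapses the first factor to $\mu(B) + r_0^d \phi(m)^{1/2}$.

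It remains to bound $\mu(H_i(k))$ by a constant times $\delta_k(-\log \delta_k)^{d-1}$ for $k \in \{m,n\}$. Since $f(\mathbf{x}_i) \in V$ and $h|_V \leq \mathfrak{c}$, one has $\mu(H_i(k)) \leq \mathfrak{c}\, m_d(H_i(k))$, and the hypothesis $\phi_1^{-1}(n^{-4d}) \leq m$ forces $r = \phi(m)^{1/(2d)} \leq n^{-2}$. Combining this with the standing bounds $\delta_m > m^{-2}$ and $\delta_n > n^{-2}$ (valid for $m > N_0$) makes the perturbation $(2p)^d r$ at most a bounded multiple of both $\delta_m$ and $\delta_n$, so the proof of \eqref{eq:4.4} adapts to yield $m_d(H_i(k)) \leq C \delta_k (-\log \delta_k)^{d-1}$ with a constant $C$ depending only on $d$ and $p$. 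Choosing $N_2 > e^{(\tau N_0 - \ln c)/(4d)}$ guarantees that $m \geq \phi_1^{-1}(n^{-4d})$ already exceeds $N_0$.

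The main obstacle is the bookkeeping of three error terms: $\phi(m)^{1/2}/\mu(B)$ coming from mixing, and the ratios $(2p)^d r / \delta_m$, $(2p)^d r / \delta_n$ coming from the Lipschitz perturbation. Unlike the hyperrectangle case, the hyperboloid volume carries an extra logarithmic factor $(-\log \delta_n)^{d-1}$, so one must also verify that $-\log(\delta_n + (2p)^d r) \leq C' (-\log \delta_n)$, which follows from $(2p)^d r \leq (2p)^d n^{-2} \leq (2p)^d \delta_n$ and the mean-value estimate for $\log$. Once these are absorbed into a single constant $\tilde{c}_1$, the stated inequality follows.
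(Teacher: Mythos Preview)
Your proposal is correct and takes essentially the same approach as the paper: partition $B$ into small balls, apply Lemma \ref{l42} to pass to hyperboloids centered at $f(\mathbf{x}_i)$, use exponential mixing twice via the cylinder decomposition (the hyperboloid analogue of Lemma \ref{l38} putting the pieces in $\mathcal{C}_2$), and then bound the hyperboloid measures using $h|_V \leq \mathfrak{c}$ together with \eqref{eq:4.2}--\eqref{eq:4.4}. The only cosmetic difference is that the paper partitions with radius $r = n^{-3}$ (so that \eqref{eq:4.4} applies verbatim and the error term becomes $r_0^d n^{3d}\phi(m) \leq r_0^d n^{-d}$), whereas you take $r = \phi(m)^{1/(2d)}$ as in Lemma \ref{l39}; both choices work under the constraint $\phi(m) \leq n^{-4d}$.
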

	
	\begin{proof}
		Let $r_0$ be the radius of $B$. Partition $B$ into $r_0^d n^{3 d}$ balls with radius $r:=n^{-3}$. Let $\left\{B\left(\mathbf{x}_i, n^{-3}\right): 1 \leq i \leq r_0^d n^{3 d}\right\}$ denote the collection of these pairwise disjointed balls. For each ball $B\left(\mathbf{x}_i, n^{-3}\right)$, $B\left(\mathbf{x}_i, n^{-3}\right) \subset f^{-1} B\left(f\left(\mathbf{x}_i\right), p n^{-3}\right)$. By Lemma \ref{l42}, we have
		$$
		\begin{aligned}
			&	B \cap D_m \cap D_n \\ & \subset \bigcup_{i \leq r_0^d n^{3 d}} B\left(\mathbf{x}_i, n^{-3}\right) \cap T^{-m} H\left(f\left(\mathbf{x}_i\right), \delta_m+(2 p)^d n^{-3}\right) \cap T^{-n} H\left(f\left(\mathbf{x}_i\right), \delta_n+(2 p)^d n^{-3}\right).
		\end{aligned}
		$$
		Using a similar argument as in Lemma \ref{l39} and applying the bound in \eqref{eq:4.4}, then for sufficiently large $m>N_0$ such that $\delta_m>m^{-2}$,
		$$
		\begin{aligned}
			& \mu\left(B \cap D_m \cap D_n\right) \\
			\leq & \sum_{i \leq r_0^d n^{3 d}}\left(\mu\left(B\left(\mathbf{x}_i, n^{-3}\right)\right)+\phi(m)\right)\left(\mu\left(H\left(f\left(\mathbf{x}_i\right), \delta_m+(2 p)^d n^{-3}\right)\right)+\phi(n-m)\right) \\
			& \cdot \mu\left(H\left(f\left(\mathbf{x}_i\right), \delta_n+(2 p)^d n^{-3}\right)\right) \\
			\leq & \left(\mu(B)+r_0^d n^{3 d} \phi(m)\right)\left(\mathfrak{c} d 2^{d+1} \delta_m\left(-\log \delta_m\right)^{d-1}+\phi(n-m)\right) \cdot \mathfrak{c} d 2^{d+1} \delta_n\left(-\log \delta_n\right)^{d-1}.
		\end{aligned}
		$$
		Since $\phi_1^{-1}\left(n^{-4 d}\right) \leq m$, we have
		$$
		n^{3 d} \phi(m) \leq n^{3 d} \cdot n^{-4 d}=n^{-d}.
		$$
		Therefore, there exists a sufficiently large $N_2>e^{\frac{\tau N_0-\ln c}{4 d}}$ such that for $n>N_2$,
		$$
		r_0^d n^{3 d} \phi(m)<\mu(B).
		$$
		Hence, there exists a constant $\tilde{c}_1$ such that for sufficiently large integers $m$ and $n$ with $\phi_1^{-1}\left(n^{-4 d}\right) \leq m<n$, and for $n>N_2$,
		$$
		\mu\left(B \cap D_m \cap D_n\right) \leq \tilde{c}_1 \mu(B)\left(\delta_m\left(-\log \delta_m\right)^{d-1}+\phi(n-m)\right) \cdot \delta_n\left(-\log \delta_n\right)^{d-1}.
		$$
	\end{proof}
	
	According to \cite[Lemma 3.5]{7}, the sets $J_n \cap B \cap T^{-n} H(f\left(\mathbf{x}\right), \delta)$ satisfy the bound property $(\mathbf{P2})$, where $J_n \in \mathcal{F}_n$ and $B$ is a ball. As a result, we obtain the following estimation, which can be proved in a similar method to Lemma \ref{l310}.
	
	\begin{lemma} \label{l47}
		Let $T:[0,1]^d \rightarrow[0,1]^d$ be a piecewise expanding map. There exists a constant $\tilde{c}_2$ such that for any integers $m < \phi_1^{-1}\left(  n^{-4d} \right)$, any ball $B$, $\mathbf{x} \in[0,1]^d$ and $0<\delta_1, \delta_2<1$, 
		$$
		\begin{aligned}
			&\mu\left(B \cap T^{-m} H\left(f\left(\mathbf{x}\right), \delta_1\right) \cap T^{-n} H\left(f\left(\mathbf{x}\right), \delta_2\right)\right) \\ & \leq \left(\mu\left(B \cap T^{-m} H\left(f\left(\mathbf{x}\right), \delta_1\right)\right)+\tilde{c}_2 \phi(n)^{\frac{1}{2}}\right) \mu\left(H\left(f\left(\mathbf{x}\right), \delta_2\right)\right).
		\end{aligned}
		$$
	\end{lemma}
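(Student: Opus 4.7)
The plan is to mirror the argument of Lemma \ref{l310}, with hyperboloids playing the role that hyperrectangles had there. First I would write
$$
\mu\bigl(B \cap T^{-m} H(f(\mathbf{x}),\delta_1) \cap T^{-n} H(f(\mathbf{x}),\delta_2)\bigr) = \sum_{J_m \in \mathcal{F}_m} \mu\bigl(J_m \cap B \cap T^{-m} H(f(\mathbf{x}),\delta_1) \cap T^{-(m+(n-m))} H(f(\mathbf{x}),\delta_2)\bigr),
$$
exploiting the fact that $\mathcal{F}_m$ partitions $[0,1]^d$ up to a $\mu$-null set. The key structural input is \cite[Lemma 3.5]{7}, which guarantees that each set $J_m \cap B \cap T^{-m} H(f(\mathbf{x}),\delta_1)$ satisfies the bounded property $(\mathbf{P2})$; this is exactly what is needed to apply the $\phi(n)$-mixing inequality \eqref{Eq.1.2} with the shifted index $n-m$ treated as $n$ itself (after a further trivial shift), so that
$$
\mu\bigl(J_m \cap B \cap T^{-m} H(f(\mathbf{x}),\delta_1) \cap T^{-n} H(f(\mathbf{x}),\delta_2)\bigr) \leq \bigl(\mu(J_m \cap B \cap T^{-m} H(f(\mathbf{x}),\delta_1))+\phi(n)\bigr) \mu(H(f(\mathbf{x}),\delta_2)).
$$

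Summing over $J_m \in \mathcal{F}_m$ and using the disjointness of the cylinders collapses the first term to $\mu(B \cap T^{-m} H(f(\mathbf{x}),\delta_1)) \cdot \mu(H(f(\mathbf{x}),\delta_2))$, and produces the error term $\# \mathcal{F}_m \cdot \phi(n) \cdot \mu(H(f(\mathbf{x}),\delta_2))$. To finish, I would bound $\# \mathcal{F}_m \leq Q^m$ and invoke the hypothesis $m < \phi_1^{-1}(n^{-4d})$, which unpacks as $c e^{-\tau m} > n^{-4d}$, i.e.\ $m < (\log c + 4d\log n)/\tau$. Consequently
$$
\# \mathcal{F}_m \cdot \phi(n) \leq Q^{(\log c + 4d\log n)/\tau} \cdot c e^{-\tau n} \leq \tilde c_2' \, n^{4d(\log Q)/\tau} e^{-\tau n},
$$
which is dominated by $\tilde{c}_2 \, \phi(n)^{1/2} = \tilde{c}_2 \, c^{1/2} e^{-\tau n/2}$ for a suitable constant $\tilde{c}_2$ (the polynomial factor is absorbed by half of the exponential). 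Combining the two contributions yields the claimed inequality.

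I do not expect a serious obstacle: the main point is that the cited \cite[Lemma 3.5]{7} already packages the geometric fact that a cylinder intersected with a ball and a preimaged hyperboloid stays in the class $\mathcal{C}_2$, and after that the calculation is the same bookkeeping as in Lemma \ref{l310}. The only subtlety is choosing the exponent $4d$ in the threshold $\phi_1^{-1}(n^{-4d})$ so that the polynomial blow-up $Q^m$ is killed by a square root of the mixing rate, leaving a clean $\phi(n)^{1/2}$ on the right-hand side; this is where one must be careful but the algebra above shows any such choice of exponent larger than $(\log Q)/\tau$ times a constant would work.
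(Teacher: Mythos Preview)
Your proposal is correct and follows essentially the same approach as the paper: the paper itself gives no detailed proof of Lemma~\ref{l47}, merely noting that \cite[Lemma~3.5]{7} ensures the sets $J_m \cap B \cap T^{-m} H(f(\mathbf{x}),\delta_1)$ satisfy $(\mathbf{P2})$ and that the remainder proceeds exactly as in Lemma~\ref{l310}. One cosmetic remark: in the mixing step you can apply \eqref{Eq.1.2} directly with index $n$ (since the second factor is $T^{-n}H(f(\mathbf{x}),\delta_2)$), so there is no need to speak of a ``shifted index $n-m$''---your displayed inequality is already the right one.
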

	
	We are now prepared to estimate $\mu\left(B \cap D_m \cap D_n\right)$ for the case  $m < \phi_1^{-1}\left(  n^{-4d} \right)$.
	
	\begin{lemma} \label{l48}
		Let $B$ be a ball centered at $\mathbf{z} \in V_{\delta_1} \cap W_{\delta_2}$ and radius $r_0<\frac{\min \left\{\delta_1, \delta_2\right\}}{2}$.
		There exist a constant $\tilde{c}_3$, sufficiently large integers $N_3>N_0$ and $N_4>e^{\frac{\tau N_3-\ln c}{4 d}}$ such that for sufficiently large integers $m$ and $n$ with $N_3<m < \phi_1^{-1}\left(n^{-4 d}\right)$ and $n>N_4$,
		$$
		\mu\left(B \cap D_m \cap D_n\right) \leq \tilde{c}_3 \mu(B) \cdot \delta_m\left(-\log \delta_m\right)^{d-1} \cdot \delta_n\left(-\log \delta_n\right)^{d-1}.
		$$
	\end{lemma}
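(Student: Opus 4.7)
The plan is to parallel the proof of Lemma \ref{l311}, replacing hyperrectangles by hyperboloids and using \eqref{eq:4.4} in place of the Zygmund-type bound on $\mu$ of a hyperrectangle. First I would partition $B$ into $r_0^d \phi(n)^{-1/4}$ pairwise disjoint sub-balls $\{B(\mathbf{x}_i, r_1)\}$ of radius $r_1 := \phi(n)^{1/(4d)}$. Using $B(\mathbf{x}_i, r_1) \subset f^{-1} B(f(\mathbf{x}_i), pr_1)$ together with Lemma \ref{l42},
$$B \cap D_m \cap D_n \subset \bigcup_i G_i \cap T^{-n} H\bigl(f(\mathbf{x}_i), \delta_n + (2p)^d r_1\bigr),$$
where $G_i := B(\mathbf{x}_i, r_1) \cap T^{-m} H(f(\mathbf{x}_i), \delta_m + (2p)^d r_1)$. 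Since $m < \phi_1^{-1}(n^{-4d})$, Lemma \ref{l47} applies; combined with \eqref{eq:4.4} it yields
$$\mu(B \cap D_m \cap D_n) \leq \Bigl(\sum_i \mu(G_i) + \tilde{c}_2 r_0^d \phi(n)^{1/4}\Bigr) \cdot \mathfrak{c} d 2^{d+1} \delta_n (-\log \delta_n)^{d-1}$$
for $n$ large enough that $\delta_n + (2p)^d r_1 < 2\delta_n$ and $\delta_n + (2p)^d r_1 > e^{-n^3}$ (which is where \eqref{eq:4.4} requires $\delta_n > n^{-2}$).

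To control $\sum_i \mu(G_i)$ I would introduce a second, finer partition of $B$ into $r_0^d \phi(m)^{-1/2}$ balls $\{B(\mathbf{z}_j, r_2)\}$ of radius $r_2 := \phi(m)^{1/(2d)}$; the hypothesis $m < \phi_1^{-1}(n^{-4d})$ together with $m$ large guarantees $r_2 \geq r_1$. For $z \in B(\mathbf{z}_j, r_2) \cap B(\mathbf{x}_i, r_1)$ the $p$-Lipschitz bound gives $|f(\mathbf{x}_i) - f(\mathbf{z}_j)| \leq p(r_1+r_2) \leq 2pr_2$, and the same product-expansion argument as in the proof of Lemma \ref{l42} shows
$$H\bigl(f(\mathbf{x}_i), \delta_m + (2p)^d r_1\bigr) \subset H\bigl(f(\mathbf{z}_j), \delta_m + 3(2p)^d r_2\bigr).$$
Hence the $G_i$'s can be reassembled inside the disjoint union $\bigcup_j B(\mathbf{z}_j, r_2) \cap T^{-m} H(f(\mathbf{z}_j), \delta_m + 3(2p)^d r_2)$, and a single application of the polynomial-mixing property of $\mu$ together with \eqref{eq:4.4} produces
$$\sum_i \mu(G_i) \leq \bigl(\mu(B) + r_0^d \phi(m)^{1/2}\bigr) \cdot \mathfrak{c} d 2^{d+1} \delta_m (-\log \delta_m)^{d-1}.$$

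Finally I would substitute these two bounds, absorb the error terms $r_0^d \phi(m)^{1/2}$ and $\tilde{c}_2 r_0^d \phi(n)^{1/4}$ using $\delta_m > m^{-2}$, $\delta_n > n^{-2}$ and the exponential decay of $\phi$: pick $N_3 > N_0$ so that $r_0^d \phi(m)^{1/2} \leq \mu(B) \cdot \delta_m(-\log\delta_m)^{d-1}$ for all $m > N_3$, and then choose $N_4 > e^{(\tau N_3 - \ln c)/(4d)}$ so that $\phi_1^{-1}(n^{-4d}) > N_3$ whenever $n > N_4$. This yields the claimed bound with an explicit constant $\tilde{c}_3$ in terms of $\mathfrak{c}$, $d$ and $p$. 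The main obstacle is the hyperboloid analogue of the "triangle inequality" used in the second partition step: unlike the coordinate-parallel hyperrectangle case in Lemma \ref{l311}, the inclusion $H(f(\mathbf{x}_i), \alpha) \subset H(f(\mathbf{z}_j), \alpha + O(r_2))$ is not a one-line consequence of the triangle inequality but rests on expanding $\prod_k |a_k + b_k|$ and using that $|a_k| \leq 1$ on the hyperboloid (so that cross terms are controlled by powers of $|b_k| \leq 2pr_2$); once this inclusion is firmly established the rest of the proof is a direct transcription of the hyperrectangle argument in Lemma \ref{l311}.
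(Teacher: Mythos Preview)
Your proposal is correct and follows the same two-partition strategy as the paper (outer partition $+$ Lemma~\ref{l47}, inner partition $+$ mixing, hyperboloid inclusion in between). The only substantive difference is cosmetic: you carry over the $\phi$-based radii $r_1=\phi(n)^{1/(4d)}$, $r_2=\phi(m)^{1/(2d)}$ from Lemma~\ref{l311}, whereas the paper switches to the simpler polynomial radii $n^{-3}$ and $m^{-3}$. The paper's choice has the minor advantage that \eqref{eq:4.4} is stated precisely for perturbations of size $c_0 n^{-3}$, so it applies verbatim; with your exponentially small radii you would in fact be invoking \eqref{eq:4.2} together with $\delta_n+(2p)^d r_1<2\delta_n$ rather than \eqref{eq:4.4} as written, which is a trivial adjustment. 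Either scale makes the error terms absorbable, so both routes lead to the same bound.
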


	\begin{proof}
		We follow the notations of Lemma \ref{l46}. By Lemma \ref{l47},
		$$
		\begin{aligned}
			& \mu\left(B \cap D_m \cap D_n\right) \\
			\leq & \sum_{i \leq r_0^d n^{3 d}} \mu\left(B\left(\mathbf{x}_i, n^{-3}\right) \cap T^{-m} H\left(f\left(\mathbf{x}_i\right), \delta_m+(2 p)^d n^{-3}\right) \cap T^{-n} H\left(f\left(\mathbf{x}_i\right), \delta_n+(2 p)^d n^{-3}\right)\right) \\
			\leq & \sum_{i \leq r_0^d n^{3 d}}\left(\mu\left(B\left(\mathbf{x}_i, n^{-3}\right) \cap T^{-m} H\left(f\left(\mathbf{x}_i\right), \delta_m+(2 p)^d n^{-3}\right)\right)+\tilde{c}_2 \phi(n)^{\frac{1}{2}}\right) \\& \cdot \mu\left(H\left(f\left(\mathbf{x}_i\right), \delta_n+(2 p)^d n^{-3}\right)\right) \\
			\leq & \left(\sum_{i \leq r_0^d n^{3 d}} \mu\left(B\left(\mathbf{x}_i, n^{-3}\right) \cap T^{-m} H\left(f\left(\mathbf{x}_i\right), \delta_m+(2 p)^d n^{-3}\right)\right)+\tilde{c}_2 r_0^d n^{3 d} \phi(n)^{\frac{1}{2}}\right) \\
			& \cdot \mu\left(H\left(f\left(\mathbf{x}_i\right), \delta_n+(2 p)^d n^{-3}\right)\right) .
		\end{aligned}
		$$
		Next, by \eqref{eq:4.4}, for $n>N_0$,
		$$
		\mu\left(H\left(f\left(\mathbf{x}_i\right), \delta_n+(2 p)^d n^{-3}\right)\right) \leq \mathfrak{c} d 2^{d+1} \delta_n\left(-\log \delta_n\right)^{d-1}.
		$$
		We now focus on estimating $\sum_{i \leq r_0^d n^{3 d}} \mu\left(B\left(\mathbf{x}_i, n^{-3}\right) \cap T^{-m} H\left(f\left(\mathbf{x}_i\right), \delta_m+(2 p)^d n^{-3}\right)\right)$.
		
		Partition $B$ into $r_0^d m^{3 d}$ balls with radius $m^{-3}$. Denote the collection of these pairwise disjointed balls as
		$$
		\left\{B\left(\mathbf{z}_j, m^{-3}\right): 1 \leq j \leq r_0^d m^{3 d}\right\}.
		$$
		For any $\mathbf{x} \in B\left(\mathbf{z}_j, m^{-3}\right) \cap B\left(\mathbf{x}_i, n^{-3}\right) \cap T^{-m} H\left(f(\mathbf{x}_i), \delta_m+(2p)^d n^{-3}\right)$, we have
		$$
		T^m \mathbf{x} \in H\left(f\left(\mathbf{x}_i\right), \delta_m+(2 p)^d n^{-3}\right)
		$$
		and
		$$
		\mathbf{x}_i \in B\left(\mathbf{z}_j, m^{-3}+n^{-3}\right) \subset f^{-1} B\left(f\left(\mathbf{z}_j\right), p m^{-3}+p n^{-3}\right) .
		$$
		By applying the same reason from Lemma \ref{l42}, we have
		$$
		T^m \mathbf{x} \in  H\left(f(\mathbf{z}_j), \delta_m+(2p)^d m^{-3}+2 \cdot (2p)^d n^{-3}\right) \subset H\left(f(\mathbf{z}_j),\delta_m+ 2^{d+2}p^d m^{-3}\right).
		$$
		Hence, 
		$$
		\begin{aligned}
			& \bigcup_{i \leq r_0^d n^{3 d}} B\left(\mathbf{x}_i, n^{-3}\right) \cap T^{-m} H\left(f\left(\mathbf{x}_i\right), \delta_m+(2 p)^d n^{-3}\right) \\
			& \quad \subset \bigcup_{j \leq r_0^d m^{3 d}} B\left(\mathbf{z}_j, m^{-3}\right) \cap T^{-m} H\left(f\left(\mathbf{z}_j\right), \delta_m+2^{d+2} p^d m^{-3}\right).
		\end{aligned}
		$$
		Since these balls are pairwise disjointed, we have
		$$
		\begin{aligned}
			& \sum_{i \leq r_0^d n^{3 d}} \mu\left(B\left(\mathbf{x}_i, n^{-3}\right) \cap T^{-m} H\left(f\left(\mathbf{x}_i\right), \delta_m+(2 p)^d n^{-3}\right)\right) \\
			= & \mu\left(\bigcup_{i \leq r_0^d n^{3 d}} B\left(\mathbf{x}_i, n^{-3}\right) \cap T^{-m} H\left(f\left(\mathbf{x}_i\right), \delta_m+(2 p)^d n^{-3}\right)\right) \\
			\leq & \sum_{j \leq r_0^d m^{3 d}} \mu\left(B\left(\mathbf{z}_j, m^{-3}\right) \cap T^{-m} H\left(f\left(\mathbf{z}_j\right), \delta_m+2^{d+2} p^d m^{-3}\right) \right) .
		\end{aligned}
		$$
		Next, applying the polynomial-mixing property of $\mu$, for $m>N_0$,
		$$
		\begin{aligned}
			& \sum_{j \leq r_0^d m^{3 d}} \mu\left(B\left(\mathbf{z}_j, m^{-3}\right) \cap T^{-m} H\left(f\left(\mathbf{z}_j\right), \delta_m+2^{d+2} p^d m^{-3}\right) \right) \\
			\leq & \sum_{j \leq r_0^d m^{3 d}}\left(\mu\left(B\left(\mathbf{z}_j, m^{-3}\right)\right)+\phi(m)\right) \mu\left(H\left(f(\mathbf{z}_j), \delta_m+2^{d+2} p^d m^{-3}\right)\right) \\
			\leq & \sum_{j \leq r_0^d m^{3 d}}\left(\mu\left(B\left(\mathbf{z}_j, m^{-3}\right)\right)+\phi(m)\right) \cdot \mathfrak{c} d 2^{d+1} \delta_m\left(-\log \delta_m\right)^{d-1} \\
			= & \left(\mu(B)+ r_0^d m^{3 d} \phi(m)\right) \cdot \mathfrak{c} d 2^{d+1} \delta_m\left(-\log \delta_m\right)^{d-1} \\
			\leq & \mu(B)\left(1+\mathfrak{c} m^{3 d} \phi(m)\right) \cdot \mathfrak{c} d 2^{d+1} \delta_m\left(-\log \delta_m\right)^{d-1},
		\end{aligned}
		$$
		where the last inequality follows from $\mu(B) \geq \mathfrak{c}^{-1} r_0^d$.
		Together with above estimations, we have
		$$
		\begin{aligned}
			& \mu\left(B \cap D_m \cap D_n\right) \\
			\leq & \left(\mu(B)\left(1+\mathfrak{c} m^{3 d} \phi(m)\right) \cdot \mathfrak{c} d 2^{d+1} \delta_m\left(-\log \delta_m\right)^{d-1}+\tilde{c}_2 r_0^d n^{3 d} \phi(n)^{\frac{1}{2}}\right)   \\ & \cdot \mathfrak{c} d 2^{d+1} \delta_n\left(-\log \delta_n\right)^{d-1}.
		\end{aligned}
		$$
		By the polynomial-mixing property of $\mu$,  there exists a  sufficiently large integer $N_3>N_0$ such that for sufficiently large integers $m$ and $n$ satisfying  $N_3<m < \phi_1^{-1}\left(n^{-4 d}\right)$, 
		$$\mathfrak{c} m^{3 d} \phi(m)<1. $$
		Additionally,  for $n>N_4$, where $ N_4>e^{\frac{\tau N_3-\ln c}{4 d}}$,
		$$
		\tilde{c}_2 r_0^d n^{3 d} \phi(n)^{\frac{1}{2}}<\phi(n)^{\frac{1}{4}}<\phi(m)^{\frac{1}{4}}<\delta_m\left(-\log \delta_m\right)^{d-1}.
		$$
		Therefore, there exist a constant $\tilde{c}_3$, as well as sufficiently large integers $N_3>N_0$ and $ N_4>e^{\frac{\tau N_3-\ln c}{4 d}}$ such that for sufficiently large integers $m$ and $n$ with $N_3<m < \phi_1^{-1}\left(n^{-4 d}\right)$ and $n>N_4$,
		$$
		\mu\left(B \cap D_m \cap D_n\right) \leq \tilde{c}_3 \mu(B) \cdot \delta_m\left(-\log \delta_m\right)^{d-1} \cdot \delta_n\left(-\log \delta_n\right)^{d-1}.
		$$
	\end{proof}
	
	Next, we apply a technique similar to that used in the proof of Lemma \ref{l314} to establish the result in Lemma \ref{l44}.

	\begin{proof}[Proof of Lemma \ref{l44}]
		Let $B(\mathbf{x}, r)$ be a ball with $r<\frac{\min \left\{\delta_1, \delta_2\right\}}{2}$, where $\mathbf{x} \in V_{\delta_1} \cap W_{\delta_2}$ and $\mu(B)>0$.
		Let $N \in \mathbb{N}$, and define $Z_N(x)=\sum_{n=1}^N \chi_{B \cap D_n}(x)$. By Lemma \ref{l45}, for all $n>N_1$, one has
		$$
		\frac{\mu(B)}{2 \mathfrak{c}(d-1)!} \cdot \delta_n\left(-\log \delta_n\right)^{d-1} \leq \mu\left(B \cap D_n\right) \leq \mathfrak{c} d 2^{d+2} \mu(B) \cdot \delta_n\left(-\log \delta_n\right)^{d-1}.
		$$
		Hence,
		$$
		\mathbb{E}\left(Z_N\right)=\sum_{n=1}^{N_1} \mu\left(B \cap D_n\right)+\sum_{n=N_1}^{N} \mu\left(B \cap D_n\right) \geq \frac{\mu(B)}{2 \mathfrak{c}(d-1)!} \sum_{n=N_1}^{N} \delta_n\left(-\log \delta_n\right)^{d-1}.
		$$
		On the other hand,
		$$
		\mathbb{E}\left(Z_N^2\right)=\sum_{m, n=1}^N \mu\left(B \cap D_m \cap D_n\right)=2 \sum_{n=1}^N \sum_{m=1}^{n-1} \mu\left(B \cap D_m \cap D_n\right)+\sum_{n=1}^N \mu\left(B \cap D_n\right).
		$$
		Let $N^*=\max \left(N_1, N_2, N_4\right)$, then 
		$$\sum_{n=1}^N \mu\left(B \cap D_n\right)=\sum_{n=1}^{N^*} \mu\left(B \cap D_n\right) + \sum_{n=N^*}^N \mu\left(B \cap D_n\right).$$
		Additionally, 
		$$
		\begin{aligned}
			&	\sum_{n=1}^N \sum_{m=1}^{n-1} \mu\left(B \cap D_m \cap D_n\right)= 	\sum_{n=1}^{N^*} \sum_{m=1}^{N_3} \mu\left(B \cap D_m \cap D_n\right) + \sum_{n=N^*}^{N} \sum_{m=1}^{N_3} \mu\left(B \cap D_m \cap D_n\right) \\
			&+ \sum_{n=1}^{N^*} \sum_{m=N_3}^{\left\lfloor\phi_1^{-1}\left(n^{-4 d}\right)\right\rfloor} \mu\left(B \cap D_m \cap D_n\right)+ \sum_{n=N^*}^{N} \sum_{m=N_3}^{\left\lfloor\phi_1^{-1}\left(n^{-4 d}\right)\right\rfloor} \mu\left(B \cap D_m \cap D_n\right)  \\
			&+ \sum_{n=1}^{N^*} \sum_{m=\left\lceil\phi_1^{-1}\left(n^{-4 d}\right)\right\rceil}^{n-1} \mu\left(B \cap D_m \cap D_n\right)+ \sum_{n=N^*}^{N} \sum_{m=\left\lceil\phi_1^{-1}\left(n^{-4 d}\right)\right\rceil}^{n-1} \mu\left(B \cap D_m \cap D_n\right).
		\end{aligned}
		$$
		
		By applying Lemmas \ref{l46} and \ref{l48}, there exist $\tilde{c}_4$,$\tilde{c}_5$ and $\tilde{c}_6$ such that
		$$
		\begin{aligned}
			\sum_{n=1}^N \sum_{m=1}^{n-1} \mu\left(B \cap D_m \cap D_n\right) 
			&\leq  \tilde{c}_4 \mu(B) \sum_{n=N^*}^N \sum_{m=\left\lceil\phi_1^{-1}\left(n^{-4 d}\right)\right\rceil}^{n-1}\delta_m\left(-\log \delta_m\right)^{d-1} \cdot \delta_n\left(-\log \delta_n\right)^{d-1} \\
			&+  \tilde{c}_5 \mu(B) \sum_{n=N^*}^N  \delta_n\left(-\log \delta_n\right)^{d-1}+\tilde{c}_6 \mu(B).
		\end{aligned}
		$$
		Hence, there exist $\tilde{c}_7$ and $\tilde{c}_8$,  such that
		$$
		\mathbb{E}\left(Z_N^2\right)  \leq  2 \tilde{c}_4 \mu(B) \left(\sum_{n=N^*}^N \delta_n\left(-\log \delta_n\right)^{d-1}\right)^2 + 
		\tilde{c}_7 \mu(B) \sum_{n=N^*}^N \delta_n\left(-\log \delta_n\right)^{d-1}+ \tilde{c}_{8}\mu(B).
		$$
		By Lemma \ref{l313}, for any $\lambda>0$, we deduce from the previous inequalities that
		$$
		\begin{aligned}
			& \mu\left(Z_N>\lambda \mathbb{E}\left(Z_N\right)\right) \geq(1-\lambda)^2 \frac{\mathbb{E}\left(Z_N\right)^2}{\mathbb{E}\left(Z_N^2\right)} \\
			& \geq(1-\lambda) \frac{\left(\frac{\mu(B)}{2 \mathfrak{c}(d-1)!} \sum_{n=N_1}^N \delta_n\left(-\log \delta_n\right)^{d-1}\right)^2}{2 \tilde{c}_4 \mu(B)\left(\sum_{n=N^*}^N \delta_n\left(-\log \delta_n\right)^{d-1}\right)^2+\tilde{c}_7 \mu(B) \sum_{n=N^*}^N \delta_n\left(-\log \delta_n\right)^{d-1}+\tilde{c}_{8} \mu(B)}.
		\end{aligned}
		$$
		Taking the limit as $N \rightarrow \infty$, we have
		$$
		\begin{aligned}
			\mu\left(B \cap \mathcal{R}^{f \times}\left(\left\{\delta_n\right\}\right)\right) & =\mu\left(\limsup \left(B \cap D_n\right)\right) \geq \mu\left(\limsup \left(Z_N>\lambda \mathbb{E}\left(Z_N\right)\right)\right) \\
			& \geq \limsup \mu\left(Z_N>\lambda \mathbb{E}\left(Z_N\right)\right)\geq (1-\lambda) \frac{\mu(B)}{8 \tilde{c}_4 \mathfrak{c}^2((d-1)!)^2}.
		\end{aligned}
		$$
		Let $\lambda=1 / 2$ and $\alpha_2=\left(16 \tilde{c}_4 \mathfrak{c}^2((d-1)!)^2\right)^{-1}$. Therefore, Lemma \ref{l44} holds, and we reach the conclusion.
	\end{proof}

\end{document}